\pdfoutput=1
\documentclass[12pt,reqno,a4paper]{amsart}

\usepackage[margin=1.05in,footskip=0.15in,tmargin=1in,bmargin=0.85in]{geometry}
\usepackage[indent,skip=.2\baselineskip]{parskip}
\usepackage[utf8]{inputenc}
\usepackage[english]{babel}
\usepackage{amsthm,amsmath,amssymb}
\usepackage{mathtools}
\usepackage{amsfonts}
\usepackage{enumitem}
\usepackage{graphicx}
\usepackage[font=footnotesize]{caption}
\usepackage{algpseudocode}
\usepackage{floatrow}
\usepackage{blkarray}
\usepackage{tikz-cd}
\usepackage{chemfig}
\usepackage{array}
\usepackage{fancyhdr}
\usepackage{comment}
\usepackage[version=4]{mhchem}
\usepackage[dvipsnames]{xcolor}
\usepackage[ruled,vlined]{algorithm2e}
\usepackage{aliascnt}

\usepackage{accents}

\DeclareSymbolFont{yhlargesymbols}{OMX}{yhex}{m}{n}

\newcommand{\IF}{I(F)}

\newcommand{\IFk}{I(F_{u})}
\newcommand{\EFilt}{(E_{\rho}^{\bullet})_{\rho \in \Sigma(1)}}
\newcommand{\EFiltw}{(\widetilde{E}_{\rho}^{\bullet})_{\rho \in \Sigma(1)}}
\newcommand{\EFiltl}{(E(\ell)_{\rho}^{\bullet})_{\rho \in \Sigma(1)}}

\usepackage[unicode,psdextra,pdfusetitle,
bookmarks, bookmarksdepth=2, colorlinks=true,
linkcolor=blue!50!black, citecolor=black!70, urlcolor=blue!50!black]{hyperref}
\usepackage[capitalise, noabbrev,nameinlink]{cleveref}

\newcommand{\R}{\mathbb{R}}
\newcommand{\C}{\mathbb{C}}
\newcommand{\Tn}{\mathbb{T}^n}
\newcommand{\f}{\varphi}
\newcommand{\Ew}{\widetilde{E}}

\newcommand{\Cl}{\operatorname{Cl}}

\newcommand{\Nef}{\operatorname{Nef}}
\newcommand{\MV}{\mathbf{MV}}
\newcommand{\Pic}{\operatorname{Pic}}
\newcommand{\ch}{\text{ch}}

\newcommand{\Td}{\text{Td}}
\newcommand{\Div}{\operatorname{Div}}
\newcommand{\CDiv}{\operatorname{CDiv}}

\theoremstyle{definition}
\newtheorem{theorem}{Theorem}[section]

\newaliascnt{proposition}{theorem}
\newtheorem{proposition}[proposition]{Proposition}
\aliascntresetthe{proposition}
\newaliascnt{lemma}{theorem}
\newtheorem{lemma}[lemma]{Lemma}
\aliascntresetthe{lemma}
\newaliascnt{corollary}{theorem}
\newtheorem{corollary}[corollary]{Corollary}
\aliascntresetthe{corollary}
\newaliascnt{definition}{theorem}
\newtheorem{definition}[definition]{Definition}
\aliascntresetthe{definition}
\newaliascnt{example}{theorem}
\newtheorem{example}[example]{Example}
\aliascntresetthe{example}
\newtheorem{example*}{Example}

\newaliascnt{remark}{theorem}
\newtheorem{remark}[remark]{Remark}
\aliascntresetthe{remark}
\newaliascnt{conjecture}{theorem}

\aliascntresetthe{conjecture}
\newaliascnt{question}{theorem}

\aliascntresetthe{question}
\newaliascnt{assumption}{theorem}

\aliascntresetthe{assumption}
\newaliascnt{notation}{theorem}

\aliascntresetthe{notation}

\crefname{theorem}{theorem}{theorems}
\crefname{construction}{construction}{constructions}
\crefname{proposition}{proposition}{propositions}
\crefname{lemma}{lemma}{lemmas}
\crefname{corollary}{corollary}{corollaries}
\crefname{definition}{definition}{definitions}
\crefname{example}{example}{examples}
\crefname{remark}{remark}{remarks}
\crefname{conjecture}{conjecture}{conjectures}
\crefname{question}{question}{questions}
\crefname{assumption}{assumption}{assumptions}
\crefname{notation}{notation}{notations}

\newtheorem*{theorem*}{Theorem}
\newtheorem*{proposition*}{Proposition}
\newtheorem*{definition*}{Definition}

\newtheorem{theoremalphabetic}{Theorem}

\crefname{theoremalphabetic}{Theorem}{Theorems}

\DeclareMathOperator{\conv}{conv}

\DeclareMathOperator{\spann}{span}

\numberwithin{equation}{section}

\DeclareTextFontCommand{\bfemph}{\bfseries\em}

\newcommand{\restr}[2]{{\left.\kern-\nulldelimiterspace #1 \vphantom{\big|} \right|_{#2}}}

\renewcommand{\P}{\mathbb{P}}

\providecommand{keywords}[1]{
  \small
  \textbf{\textit{Keywords: }} #1
}

\newcommand{\Addresses}{{
  \bigskip
  \footnotesize
  \textsc{Carles Checa, University of Copenhagen}\par\nopagebreak
  \textit{E-mail address}, \texttt{ccn@math.ku.dk}
}}

\begin{document}

\title{Toric vector bundles \\ and the number of zeros of vertical systems}

\author{Carles Checa}

\maketitle

\begin{abstract}
We provide necessary and sufficient conditions for the specializations of a vertically parametrized polynomial system to attain the maximal number of complex nonzero solutions. To each vertical system, we attach a pair consisting of a projective simplicial toric variety and a toric vector bundle. The toric vector bundle allows for a homogenization of the polynomials in the Cox ring of the toric variety, providing a homogeneous ideal with the same zeros over the torus as the original system. We show that the maximal number of isolated solutions is attained if and only if this ideal has no solutions in the faces of the toric variety and prove that this happens for generic values of the parameters. In addition, we provide a novel formula for this generic number of zeros as an alternating sum of mixed volumes over a family of polytopes attached to the toric vector bundle.

\end{abstract}

\vspace{0.2cm}

\section{Introduction}

Exploiting sparsity is a central idea in computational and enumerative algebraic geometry. Namely, the geometry of polynomial systems over the torus depends on the combinatorics of the Newton polytopes of the distinct polynomial equations. Perhaps the most relevant result about the geometry of sparse systems is the \emph{Bernstein-Khovanskii-Kushnirenko} bound.

\begin{theorem*}\cite[Theorem 5.4]{coxlitosh}
\label{bkk}
Given Laurent polynomials $F_1,\dots,F_n$ with coefficients in $\mathbb{C}$ and Newton polytopes $\Delta_1,\dots,\Delta_n \subset \mathbb{R}^n$:
\begin{enumerate}[label = (\roman*)]
    \item
    \label{common_zeros}
    The number of isolated zeros of the system
    $$F_1 = \dots = F_n = 0$$ in $(\mathbb{C}^*)^n$ is bounded above by the (normalized) mixed volume $\MV(\Delta_1,\dots,\Delta_n)$.
    \item \label{generic_zeros}
    For generic values of the coefficients of the $F_i$, the bound is attained.
\end{enumerate}
\end{theorem*}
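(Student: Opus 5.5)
Our plan is to prove the BKK bound through the compactification by a projective toric variety, followed by an intersection-theoretic count. First, choose a smooth projective fan $\Sigma$ that refines the normal fans of $\Delta_1,\dots,\Delta_n$, and let $X_\Sigma$ be the associated toric variety. Each polytope $\Delta_i$ then defines a basepoint-free (nef) torus-invariant Cartier divisor $D_i$ on $X_\Sigma$, so that $\Delta_i = P_{D_i}$. In the Cox ring, $F_i$ homogenizes to a global section $s_i \in H^0(X_\Sigma,\mathcal{O}(D_i))$. Concretely, each monomial $x^m$ is replaced by $\prod_\rho x_\rho^{\langle m,u_\rho\rangle + a_{\rho,i}}$, where $a_{\rho,i}=-\min_{m\in\Delta_i}\langle m,u_\rho\rangle$. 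On the dense torus, the zero sets of $s_1,\dots,s_n$ coincide with those of $F_1,\dots,F_n$.

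The next step identifies the mixed volume with an intersection number. We use the standard fact that $D^n = n!\,\mathrm{vol}(P_D)$ for nef $D$. Expanding $(\lambda_1D_1+\dots+\lambda_nD_n)^n$ and comparing coefficients with the polynomial $\mathrm{vol}(\lambda_1\Delta_1+\dots+\lambda_n\Delta_n)$ gives $D_1\cdots D_n = \MV(\Delta_1,\dots,\Delta_n)$ (normalized).

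For part (i), we apply refined intersection theory, in the form of Fulton's excess intersection formula. Let $Z=V(s_1,\dots,s_n)\subset X_\Sigma$. The intersection product $D_1\cdots D_n$ decomposes as a sum of contributions over the distinguished varieties of $Z$. Each isolated point contributes a positive multiplicity. Because all the $D_i$ are nef, hence globally generated, the contributions of the positive-dimensional distinguished components are nonnegative; this is Fulton, Theorem 12.2. It follows that the number of isolated zeros in the torus, even counted with multiplicity, is at most $D_1\cdots D_n=\MV$. We expect this positivity step to be the main technical obstacle, since components of $Z$ lying at infinity, or positive-dimensional ones, must be controlled. If we want to avoid Fulton, we would instead use a deformation argument: perturb the $F_i$ to generic systems and use upper semicontinuity of the number of isolated solutions in a flat family.

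For part (ii), we choose coefficients generically, so that $s_i$ is a generic section of the basepoint-free linear system $|D_i|$. By Bertini's theorem applied repeatedly, $Z$ is a finite reduced set of points. We also need $Z$ to avoid every torus-invariant orbit $O(\tau)$. The restriction of $s_i$ to $O(\tau)$ is the face system $F_i^{w}$ attached to the face of $\Delta_i$ minimized by a weight $w$ in the relative interior of $\tau$. Since $\dim O(\tau)<n$ and we have $n$ generic sections of globally generated bundles, these $n$ equations have no common zero on $O(\tau)$ (Bernstein's face condition). Hence all $D_1\cdots D_n$ points of $Z$ lie in $(\mathbb{C}^*)^n$ and are simple, so the bound is attained.
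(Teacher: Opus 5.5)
Your proposal is correct and follows the same route as the toric explanation the paper gives right after citing the theorem. Both compactify in a projective toric variety refining the normal fan of $\Delta_1+\dots+\Delta_n$, identify $\MV(\Delta_1,\dots,\Delta_n)$ with the intersection number $(D_1\cdots D_n)$, and use that the face systems are overdetermined, so generic coefficients leave no zeros on the toric boundary. You also make rigorous two points the paper's sketch leaves implicit. For (i), you use the nonnegativity of excess contributions for globally generated bundles (Fulton's Theorem 12.2); for (ii), you use Bertini to get finiteness and reducedness. One small precision: if the supports of the $F_i$ are fixed, the relevant linear system is the span of the support monomials rather than all of $|D_i|$. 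It is still basepoint-free, also after restriction to each orbit closure, because it contains the vertex monomials.
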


Toric geometry provides an interpretation of this bound: if we consider $X_{\Sigma}$ to be the projective toric variety given by the normal fan $\Sigma$ of the Minkowski sum $\Delta = \sum_{i=1}^n \Delta_i$, then each of the hypersurfaces $\{F_i = 0\}$ (and their Newton polytopes) can be associated with a nef divisor $D_i$ over $X_{\Sigma}$, which allows for a homogenization of the polynomials $F_i$ to global sections of $\mathcal{O}(D_i)$. The mixed volume can be recovered as the intersection product of the divisors $D_i$ (see \cite[Section 5.4]{toricfulton}), i.e.
\begin{equation}
\label{eq: mixedVolume}
(D_1 \cdot \: \dots \: \cdot D_n) = \MV(\Delta_1,\dots,\Delta_n).
\end{equation}
As the torus $\Tn \simeq (\mathbb{C}^*)^n$ is a dense open orbit in $X_{\Sigma}$, \eqref{eq: mixedVolume} explains the first part of the theorem. For the second part, the description of the coefficients for which the bound is attained is sometimes known as \textit{Bernstein's second theorem} \cite[Theorem B]{Bernshtein1975TheNO}. Assuming that the coefficients of each of the $F_1,\dots,F_n$ are algebraically independent parameters, the restrictions of the system to the faces of the toric variety are overdetermined polynomial systems which, for generic values of the coefficients, have no solutions. The locus of coefficients for which none of these restricted systems have a solution determines the polynomial systems for which the bound is attained.

In practice, polynomial systems usually have solutions in the faces of the toric variety determined by their Newton polytopes. The same phenomenon also occurs for distinct parametric families arising in applications, where the number of zeros for generic parameter values is strictly smaller than the mixed volume of the associated Newton polytopes. This observation motivates a more refined analysis of the possible combinatorial structures underlying the dependencies among the coefficients.

\emph{Vertically parametrized} polynomial systems (or \emph{vertical systems} for short) constitute an example of a natural family of systems for which the bound given by the mixed volume is often not attained for generic values of the parameters. These systems have arisen from the study of chemical reaction networks \cite{dickenstein2016biochemical, feinberg-book,feliu2025genericgeometrysteadystate}, as well as the critical equations of fewnomial hypersurfaces \cite{Ferrer:SONC}, and they are defined by a set of polynomials of the form
\begin{equation}
\label{verticalSystemIntro}
    F = C \big( u \star x^B \big) \ \in \C[u,x^{\pm}]^{s}
\end{equation}
with parameters $u=(u_1,\dots,u_{m})$ and variables $x=(x_1,\dots,x_n)$. Here, $C \in \C^{s \times m}$ is a full rank matrix with columns $\gamma_1,\dots,\gamma_m \in \mathbb{C}^s$, $B \in \mathbb{Z}^{n \times m}$ is a matrix whose columns $b_1,\dots,b_m  \in \mathbb{Z}^n$ correspond to the exponents of the monomials of the system and $u \star x^B$ indicates that
the $i$-th monomial is scaled by $u_i$. A particular feature of vertically parametrized systems is that each parameter accompanies the same monomial in all polynomials. We denote by $F_{u} \subset \C[x^{\pm}]^s$ each of the specializations of the systems for $u \in \C^m$. The terms \textit{engineered complete intersections} \cite{esterov2024engineeredcompleteintersectionsslightly, esterov2025engineeredcompleteintersectionseliminating} or \textit{vector-valued polynomial equations} \cite{kaveh2025vectorvaluedlaurentpolynomialequations} have also been used to describe these families. \textit{Sparse systems} (also called freely parametrized), where each of the parameters only appears in one of the equations, is a particular instance of vertically parametrized polynomial systems. A key distinction between general vertical systems and the subfamily of sparse systems is that the restriction of the system to the faces may fail to be overdetermined.

\begin{example*}
\label{example: Intro}
Consider the vertical family defined by the matrices
\[
C = \begin{pmatrix}
    1 & 0 & 1 & 1 & 1 & 1 \\
    0 & 1 & 1 & 2 & 1 & 1
\end{pmatrix}, \qquad
B = \begin{pmatrix}
    0 & 0 & 1 & 0 & 1 & 0 \\
    0 & 0 & 0 & 1 & 1 & 1
\end{pmatrix},
\]
which corresponds to the polynomial system
\begin{equation}
\label{first_system}
\begin{cases}
u_1 + u_3 x + u_4 y + u_5 x y + u_6 y = 0,\\
    u_2 + u_3 x + 2u_4 y + u_5 x y + u_6 y = 0.
        \end{cases}
\end{equation}
The Newton polytopes of both of the equations are $$\Delta = \conv\{(0,0),(1,0),(0,1),(1,1)\}.$$ Restricting to the face of the Newton polytopes given by the convex hull of $\{(1,0),(1,1)\} \subset \Delta$ gives the system
$$u_3x + u_5xy = 0.$$
This implies that the restricted system always has a solution. As a consequence, the generic root count over $(\mathbb{C}^*)^n$ cannot be equal to the mixed volume of the two polynomial equations.
\end{example*}

In some cases, a linear transformation in the rows of $C$ rectifies the support of each equation, avoiding this phenomenon. In other cases, all linear transformations of the matrix $C$ lead to a system whose restriction to some face is not overdetermined, leading to solutions at infinity for generic parameter values (see \Cref{ex: TangentExample}).

The generic number of zeros of vertical systems in $(\mathbb{C}^*)^n$ has been expressed in previous works using tropical geometry \cite{genericrootcounts,tropicalrootbounds} and also as the mixed volume over a family of virtual polytopes  \cite{kaveh2025vectorvaluedlaurentpolynomialequations}. Different algorithms to compute the zero locus of these systems have been described in \cite{esterovyuliarafael,helmincktropical}.
In this article, we describe necessary and sufficient conditions for these bounds to be attained, in an analogous way to Bernstein's second theorem.

\begin{figure}
    \centering
\scalebox{1.1}{ \begin{tikzpicture}[x=0.75cm, y=0.75cm, line width=1.25pt]

      \foreach \x in {-1,1} {
        \draw[color=white!40!black] (\x, 2pt) -- (\x, -2pt);}
      \foreach \y in {-1,1} {
        \draw[color=white!40!black] (2pt, \y) -- (-2pt, \y);}
      \draw[color=white!40!black, <->] (-2.4, 0) -- (2.4,0) {};
      \draw[color=white!40!black, <->] (0, -1.6) -- (0, 1.8) {};

      \draw[color=blue, opacity=1.0] (0,0) -- (1,0) -- (1,1) -- (0,1) --
      (0,0) -- cycle;

      \node () at (1,-0.5) {\small $\langle (1,1) \rangle  $};
      \node () at (-0.6,1)  {\small $\mathbb{C}^2$};
      \node () at (-0.6,-0.5) {\small $\mathbb{C}^2$};
      \node () at (1,1.5)  {\small $\langle (1,1) \rangle $};

     \node[circle, fill=orange, inner sep=2.0pt] () at (0,0) {};
      \node[circle, fill=orange, inner sep=2.0pt] () at (1,0) {};
      \node[circle, fill=orange, inner sep=2.0pt] () at (0,1) {};
      \node[circle, fill=orange, inner sep=2.0pt] () at (1,1) {};
    \end{tikzpicture}}
    \caption{The information of a vertical system can be encoded as a set of pairs given by the lattice points $b \in \mathbb{Z}^n$ in the columns of $B$ and the vector spaces spanned by the columns of $\gamma_i$ of $C$ such that $b_i = b$. In the figure, the representation of the vertical system in \eqref{first_system}.}
    \label{fig: exampleIntro}
\end{figure}

\subsection*{Toric vector bundles}

The idea of using toric vector bundles to describe the zeros of vertical systems has recently arisen in the works of Kaveh, Khovanskii and Spink \cite{kaveh2025vectorvaluedlaurentpolynomialequations}. For the sparse case, these vector bundles are the direct sum of the line bundles $\mathcal{O}(D_i)$, where the $D_i$ are the divisors associated to the Newton polytopes of each of the polynomial equations. In the case of sparse systems, these divisors are useful either to compute the zeros of polynomial systems using eigenvalue methods \cite{bender2021toric} or to study sparse resultants \cite{BUSE2024107739, dandrea2025sparse}.

The advantage of working with toric vector bundles over a toric variety $X_{\Sigma}$ is that their information can be encoded using  \emph{Klyachko filtrations} (see \cite{klyachko}). These are decreasing filtrations of vector spaces $\EFilt$,  labeled by the rays of the fan $\Sigma$ (see \Cref{def: KlyachkoFiltration} for their definition). Recent works on the combinatorics of toric vector bundles open new doors to the generalization of some of the properties of sparse polynomial systems to other families (see \cite{parliament, altmann2024toricsheavespolyhedra, altmann2025acyclictoricsheaves, toricvectrobundles, multigradedregularityklyachko}).

\subsection*{Main results} Our main contribution consists of attaching to each vertical system $F = C(u \star x^B)$ an ideal $\IFk \subset R$ in the homogeneous coordinate ring of a toric variety, in a manner that the zeros of $\IFk$ over $\Tn$ coincide with the original zeros of $F_u$ and for generic values of the parameters, all zeros of $\IFk$ are in $\Tn$. To this end, we consider the polytope $\Delta_F \subset \mathbb{R}^n$ defined as:
$$\Delta_F \coloneqq \conv \Big\{\sum_{i \in \mathcal{I}}b_i \in \mathbb{Z}^n : \:  \mathcal{I} \subset [m], \: |\mathcal{I}| = s, \: (\gamma_i)_{i \in \mathcal{I}} \text{ are linearly independent}\Big\} \subset \R^n$$
and $\Sigma_F$ the normal fan of $\Delta_F$. For sparse systems, $\Delta_F$ coincides with the Minkowski sum of the Newton polytopes of the distinct equations.

We say that $(X_{\Sigma},\mathcal{E}_F)$ is a \emph{vertical pair} associated to $F$ (see \Cref{def: verticalVectorBundle}) if $X_{\Sigma}$ is a toric variety given by a fan $\Sigma$ and $\mathcal{E}_F$ is a toric vector bundle over $X_{\Sigma}$ satisfying:
\begin{enumerate}[label = (\roman*)]
    \item $X_{\Sigma}$ is projective, $\Sigma$ refines $\Sigma_F$ and contains a smooth maximal cone.

    \item The Klyachko filtration associated to $\mathcal{E}_F$ equals
        \begin{equation}
\label{eq: klyachko_data_vps_intro}
    E^{i}_{\rho} \coloneqq \spann \langle \gamma_j : \:   \langle v_{\rho}, b_j \rangle \leq -i \rangle \subset \mathbb{C}^s \quad \rho \in \Sigma(1) \text{ and } i \in \mathbb{Z},
\end{equation}
where $v_{\rho} \in \mathbb{Z}^n$ is the primitive generator of the ray $\rho \in \Sigma(1)$.
\end{enumerate}
We say that the vertical pair is \textit{simplicial} if the variety $X_{\Sigma}$ is simplicial. The structure of the Klyachko filtration allows us to describe a homogenization of the vertical system to a tuple of Laurent polynomials $\widetilde{F}_u \in \mathbb{C}^s \otimes R^{\pm}$ for each $u \in \mathbb{C}^m$, where $R = \C[z_{\rho} : \: \rho \in \Sigma(1)]$ is the homogeneous coordinate ring of $X_{\Sigma}$ (see \Cref{def: sectionhomgenization}). Although the tuple of Laurent polynomials $\widetilde{F}_u$ need not be globally defined in $X_{\Sigma}$, we show that there exists a nef divisor $D_F \in \Div(X_{\Sigma})$ that allows for the construction of a homogeneous ideal
\begin{equation}
\label{eq: IFKINTRO}
\IFk \coloneqq \big\langle  \varphi \otimes z^m(\widetilde{F}_u) : \: \varphi \otimes z^m \in H_{\mathcal{E}_F^{\vee}(D_F)}\big\rangle \subset R,
\end{equation}
for each parameter value $u \in \mathbb{C}^m$ (see \eqref{def: ideal}). Here, $H_{\mathcal{E}_F^{\vee}(D_F)} \subset (\mathbb{C}^s)^{\vee} \otimes R$ is a vector space that is isomorphic to the global sections of $\mathcal{E}_F^{\vee}(D_F)$ (see \Cref{thm: globalSections}). Using the homogeneous ideal $\IFk$, we give the following description of the set of isolated zeros of $F_u$, which we denote as $V_0(F_u)$.

\begin{theoremalphabetic}
\label[theoremalphabetic]{mainTheoremIntro}
Let $F = C(u \star x^B)$ be a vertical system with $s = n$ and let $(X_{\Sigma},\mathcal{E}_F)$ be a simplicial vertical pair associated to it.
\begin{enumerate}[label = (\roman*)]
    \item(\Cref{thm: Torus}) For each $u \in \C^m$, the subschemes of $\Tn$ defined by $I(F_{u})$ and $F_{u}$ are isomorphic.
    \item(\Cref{thm: MainNoSolutionsInfinity}) There exists an open subset $U \subset \C^m$ such that:
    \begin{equation}
        V(\IFk) \cap V(z_{\rho}) = \varnothing, \quad \forall \rho \in \Sigma(1), \: \forall u \in U.
    \end{equation}
    \item(\Cref{cor: numberOfIsolatedSolutions}) For each $u \in \mathbb{C}^m$, we have
    $$|V_0(F_u) \cap \Tn| = \max_{u' \in \mathbb{C}^m}|V_0(F_{u'}) \cap \Tn|,$$
    if and only if $V(\IFk) \cap V(z_{\rho}) = \varnothing$ for all $\rho \in \Sigma(1)$.
\end{enumerate}
\end{theoremalphabetic}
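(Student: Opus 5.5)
The plan is to prove the three parts in order, with (i) and (ii) feeding into (iii).

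\emph{Part (i).} On the torus $\Tn\subset X_{\Sigma}$ all $z_\rho$ are units, so this is a local computation. I would show that the homogenization $\widetilde{F}_u$ restricts, after dividing by a monomial, to $F_u$ under the identification $x_i=\prod_\rho z_\rho^{\langle v_\rho,e_i\rangle}$. Then I would show that the generators $\varphi\otimes z^{m}(\widetilde{F}_u)$ of $\IFk$, localized at $z=\prod z_\rho$, span the same ideal as the coordinate functions of $F_u$ in $\C[x^{\pm}]$. One inclusion is immediate, since each generator is a linear combination $\varphi(F_u)$ times a unit. For the other, I would use that $D_F$ is nef and large enough that $H_{\mathcal{E}_F^{\vee}(D_F)}$ contains, for every $\varphi\in(\C^s)^\vee$, some element $\varphi\otimes z^{m}$. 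This follows from the description of global sections in \Cref{thm: globalSections}: every $\varphi$ lies in the appropriate intersection of filtration pieces for a suitable lattice point $m$ of the polytope of $D_F$. Taking $\varphi$ to range over a basis of the dual then recovers every $F_{u,i}$ up to units.

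\emph{Part (ii).} This is Bernstein's second theorem in the vertical setting. For each ray $\rho$, consider the incidence variety $Z_\rho=\{(u,p): p\in V(z_\rho)\cap V(\IFk)\}\subset\C^m\times X_\Sigma$ and show $\dim Z_\rho<m$. The step I expect to be the main obstacle is the fibre-dimension count. The plan is to stratify $V(z_\rho)$ into torus orbits $O(\tau)$ with $\rho\subset\tau$. On each orbit, the restriction of $\widetilde{F}_u$ is governed by the initial data: only the $\gamma_j$ whose $b_j$ lie on the face of $\Delta_F$ selected by $\tau$ survive, through the graded pieces of the filtrations.

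Fix a point $p\in O(\tau)$. The conditions on $u$ are linear in $u$, and I would show they cut out a subspace of codimension greater than $\dim O(\tau)=n-\dim\tau$. The key input is the definition of $\Delta_F$ via independent $s$-subsets of the $\gamma_i$. Along a proper face, the face system $\sum_{j\in J} u_j\gamma_j p^{b_j}=0$, where $J$ indexes the surviving monomials, has rank of $\{\gamma_j\}_{j\in J}$ at least the number of independent equations. Because the homogenization through the filtration already quotients out the ``degenerate'' directions (exactly those not spanning), the remaining system is overdetermined in the torus of the orbit. Concretely, I would compute the rank via the associated graded $E^{i}_\rho/E^{i+1}_\rho$. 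Then the projection of $Z_\rho$ to $\C^m$ is not dominant, and $U$ is the complement of the closure of the union of these projections.

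\emph{Part (iii).} By (i), $V_0(F_u)\cap\Tn$ corresponds to the isolated points of $V(\IFk)$ lying in the torus. Simpliciality ensures $V(\IFk)$ is a well-defined closed subscheme of the projective variety $X_\Sigma$. If $V(\IFk)$ misses every $V(z_\rho)$, it is a finite subscheme of $\Tn$. Its length should then equal a fixed intersection number, namely the top Chern class $c_n(\mathcal{E}_F)$, which is constant in flat families over $u$. By (ii), this common value is the generic count, which is the maximum. Upper semicontinuity of the number of isolated solutions is provided by the projective family and conservation of number.

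Conversely, suppose some solution lies at infinity. Either it is isolated in $X_\Sigma$, and then it absorbs part of the degree. Or it lies on a positive-dimensional component meeting the boundary, and then, by the same conservation-of-number argument (intersection multiplicities are positive for the distinguished varieties), the torus count strictly drops. Hence equality of the counts holds exactly when $V(\IFk)\cap V(z_\rho)=\varnothing$ for all $\rho\in\Sigma(1)$.
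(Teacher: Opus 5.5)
\textbf{The main gap is the converse direction of (iii).} You obtain the strict drop from ``intersection multiplicities are positive for the distinguished varieties''. But $\mathcal{E}_F$ is only globally generated (by the sections $\gamma_j\otimes x^{b_j}$). For such bundles, contributions of distinguished varieties are only known to be nonnegative; strict positivity needs ampleness, which $\mathcal{E}_F$ need not have.

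The claim genuinely fails without generic consistency. Take $F_u=(u_1+u_2x,\,u_3+u_4x)$ with the vertical pair on $\mathbb{P}^1\times\mathbb{P}^1$, and $u=(1,-1,1,-1)$. This system is generically inconsistent, so $\int_{X_\Sigma}c_n(\mathcal{E}_F)=0$ and the maximum $0$ is attained by every $u$. Yet $V(\IFk)$ contains the closure of $\{x=1\}\subset\Tn$, which meets the boundary and contributes $0$.

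So you must use generic consistency (the hypothesis of \Cref{cor: numberOfIsolatedSolutions}). You must also replace positivity by the deformation argument the paper imports from Bernstein's Theorem B(b):
\begin{itemize}
\item The incidence variety $Z=\{(u,p): p\in V(\IFk)\}$ is a rank-$(m-n)$ vector bundle over $X_\Sigma$ (see the (ii) paragraph below), hence irreducible.
\item Its projection to $\C^m$ is proper, and it is dominant by generic consistency.
\item Hence every point at infinity of a special fibre is a limit of solutions for generic parameters. Those solutions are not accounted for by the isolated zeros of $F_u$ in $\Tn$.
\end{itemize}

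\textbf{Your forward direction in (iii) is fine in outline, but one step is asserted rather than proved.} You assert that a finite $V(\IFk)$ has length $\int_{X_\Sigma}c_n(\mathcal{E}_F)$, without ever identifying $V(\IFk)$ as the zero scheme of a section of $\mathcal{E}_F$. The paper proves this in \Cref{thm: VerticalHirzebruchRiemannRoch}, via the Koszul complex of $\mathcal{E}_F^\vee$ (locally the Koszul complex of the $F_i^\sigma$) and Hirzebruch--Riemann--Roch. Your alternative, the localized top Chern class of a regular section on the Cohen--Macaulay $X_\Sigma$, works once you note two things:
\begin{itemize}
\item $\sum_j u_j\gamma_j\otimes x^{b_j}\in H^0(X_\Sigma,\mathcal{E}_F)$, since $\gamma_j\in E_\rho^{-\langle v_\rho,b_j\rangle}$ by definition;
\item $\IFk$ cuts out its zero scheme, by \Cref{thm: Torus}(ii).
\end{itemize}

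\textbf{In (ii), your strategy is the paper's dimension count, but the rank computation you defer is the whole content.} Moreover, the restriction of $\IFk$ to an orbit is not a single face system $\sum_{j\in J}u_j\gamma_jp^{b_j}$; that is the naive face restriction the filtration is designed to avoid.

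The missing input is the local normal form of \Cref{thm: Torus}(ii). Reorder so that $b_1+\dots+b_n$ is the vertex of $\Delta_F$ for $\sigma$, and let $\varphi_i$ be dual to $\gamma_i$. Then on $U_\sigma$ the ideal is generated by $F_i^\sigma=u_i+\sum_{j>n}\varphi_i(\gamma_j)u_jz^{\mathbf{V}(b_j-b_i)}$. These are regular, because $\varphi_i(\gamma_j)\neq 0$ forces $\langle v_{\rho'},b_j-b_i\rangle\ge 0$ for all $\rho'\in\sigma(1)$ (\Cref{lemm: MinkowskiReason}). Consequently:
\begin{itemize}
\item at every point of $X_\Sigma$ the linear conditions on $u$ have rank exactly $n$, which is also what makes $Z$ a vector bundle;
\item the surviving terms vary with $i$, and no orbit stratification is needed;
\item $Z$ over $V(z_\rho)$ has dimension $(n-1)+(m-n)<m$.
\end{itemize}

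\textbf{In (i), your argument matches the paper's, with one unjustified claim.} Producing $\varphi\otimes z^m\in H_{\mathcal{E}_F^\vee(D_F)}$ for \emph{every} $\varphi$ does not follow. The relevant polytope is $\Delta_{\mathcal{E}_F^\vee(D_F)}(\varphi)$, not that of $D_F$, and a nef $\mathbb{Q}$-Cartier divisor's polytope need not contain a lattice point. It suffices to handle the adapted dual basis $\varphi_1,\dots,\varphi_n$. For these, the proof of \Cref{thm: Torus} obtains the lattice points from the nefness of the poset divisors.
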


All in all, \Cref{mainTheoremIntro} gives an analogue of Bernstein's second theorem for vertical systems: the generic number of zeros over $\Tn$ of the vertical system $F = C(u \star x^B)$ is attained if and only if the ideal $\IFk$ has no zeros in the faces of the toric variety $X_{\Sigma}$, which happens in an open subset of the parameter space.

In the case where the generic number of solutions is nonzero, which has been called \textit{generically consistent} in \cite{FELIU2025630}, we express it in terms of the degree of the top Chern class of $\mathcal{E}_F$ (see \Cref{thm: VerticalHirzebruchRiemannRoch}), which allows for a reinterpretation of the generic root count in terms of the mixed volumes of a family of polytopes associated to $\mathcal{E}_F$.

Namely, each toric vector bundle has a naturally attached family of Weil divisors, which we denote as $\mathcal{S}_E = \{D_1,\dots,D_t\} \subset \Div(X_{\Sigma})$, which can be described using the Klyachko filtration \cite{altmann2024toricsheavespolyhedra, KHAN2026110646, parliament} (see \eqref{eq: SE}).  We label each of the divisors in $\mathcal{S}_E$ with an integer
$\delta_{i} \in \mathbb{Z}$ for $i = 1,\dots,t$ (see \eqref{rk: delta_D}) and use them to provide a new formula for the generic number of isolated zeros of vertical systems.

\begin{theoremalphabetic}[\Cref{thm: RootCountFinalFormula}]
\label[theoremalphabetic]{thmc}
Let $F = C(u \star x^B)$ be a generically consistent vertical system with $s = n$ and let $(X_{\Sigma},\mathcal{E}_F)$ be a simplicial vertical pair. For each $u \in \C^m$, we have
$$|V_0(F_{u}) \cap \Tn| \leq \sum_{\substack{p \in \mathbb{Z}_{\geq0}^{t} \\
p_1 + \dots + p_t = n}} \;
\mu_{p}\MV\big( \Delta_1[p_1],\dots,\Delta_t[p_t]\big), \quad \mu_{p} \coloneqq
    \prod_{i = 1}^t\binom{\delta_i}{p_i}, $$
where $\Delta_1,\dots,\Delta_t \subset \mathbb{R}^n$ are the polytopes associated with the divisors $D_1,\dots,D_t$ and $\Delta[p]$ indicates $p$ copies of the polytope $\Delta$. Moreover, the bound is attained if and only if $\IFk$ has no solutions at infinity, which happens in an open subset of $\mathbb{C}^m$. \footnote{If some of the divisors $D \in \mathcal{S}_E$ are not nef, the resulting polytope will be virtual, i.e. a formal difference of polytopes $\Delta - \Delta'$. In this setting, the mixed volume is understood by extending it multilinearly.}
\end{theoremalphabetic}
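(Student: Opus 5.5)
Theorem~B reduces to Theorem~A combined with a Chern class computation. By \Cref{mainTheoremIntro}(iii) and (ii), for every $u$ we have $|V_0(F_u)\cap\Tn|\le N$, where $N$ is the generic count. Equality holds exactly when $\IFk$ has no zeros on any $V(z_\rho)$, and this happens on an open set $U$. So the only thing left to prove is that $N$ equals the alternating mixed-volume sum.

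For this I would invoke \Cref{thm: VerticalHirzebruchRiemannRoch}, which identifies $N$ for a generically consistent system with $\deg c_n(\mathcal{E}_F)$. Here $c_n(\mathcal{E}_F)$ lives in the rational Chow ring of the simplicial projective $X_\Sigma$. The task then becomes expressing $c_n(\mathcal{E}_F)$ through the divisors in $\mathcal{S}_E$.

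The key step is a formula for the total Chern class of a toric vector bundle in terms of its Klyachko data:
$$c(\mathcal{E}_F)=\prod_{i=1}^t (1+D_i)^{\delta_i}.$$
I would try to prove this by equivariant localization, or by a piecewise-polynomial description of equivariant Chern classes on a simplicial fan. On each maximal cone $\sigma$ the bundle splits equivariantly as a sum of characters $u_{\sigma,1},\dots,u_{\sigma,n}$ read off from the filtrations, so $c^T(\mathcal{E}_F)|_\sigma=\prod_j(1+u_{\sigma,j})$. One then has to check that the piecewise-linear functions of the divisors $D_i\in\mathcal{S}_E$, counted with the integers $\delta_i$ from \eqref{rk: delta_D}, reproduce these local multisets of characters, at least in the quotient ring. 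This follows the approach of \cite{altmann2024toricsheavespolyhedra, KHAN2026110646}. Negative $\delta_i$ are handled by the formal power series $(1+D)^{\delta}$, truncated at degree $n$.

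Granting this identity, I expand the product by the generalized binomial theorem. The degree-$n$ part is
$$\sum_{p_1+\dots+p_t=n}\prod_i\binom{\delta_i}{p_i}\,D_1^{p_1}\cdots D_t^{p_t}.$$
Taking degrees, each monomial $D_1^{p_1}\cdots D_t^{p_t}$ becomes $\MV(\Delta_1[p_1],\dots,\Delta_t[p_t])$ by \eqref{eq: mixedVolume}. For non-nef $D_i$, I would write $D_i$ as a difference of nef divisors and use multilinearity of both the intersection product and the mixed volume. That extension is how virtual polytopes are defined, so the identity persists. Intersection numbers on a simplicial variety are rational, but the resulting sum equals $N$, an integer.

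The main obstacle is the Chern class identity, specifically matching the local splitting characters on each maximal cone with the global family $\mathcal{S}_E$ and its multiplicities $\delta_i$. I would first check the identity on the example of \Cref{fig: exampleIntro} and on the sparse case, where $\mathcal{S}_E=\{D_1,\dots,D_n\}$ with all $\delta_i=1$ and the formula reduces to BKK. Only then would I carry out the general cone-by-cone verification.
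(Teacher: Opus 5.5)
The reduction and the closing expansion are the paper's. The gap is the Chern-class identity you grant: your localization plan stops exactly where the proof is needed.

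\textbf{What matches.} Your reduction is the paper's. \Cref{cor: numberOfIsolatedSolutions} already gives the inequality, the characterization of equality, the open set, and the identification of the maximum with $\int_{X_\Sigma}c_n(\mathcal{E}_F)$. Your final step (degree-$n$ part of $\prod_i(1+D_i)^{\delta_i}$, monomials read as mixed volumes, multilinearity for non-nef $D_i$) is exactly the proof of \Cref{thm: RootCountFinalFormula} together with \Cref{rk: notNeflyDecorated}.

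\textbf{The gap.} You grant $c(\mathcal{E}_F)=\prod_{D\in\mathcal{S}_E}(1+c_1(\mathcal{O}(D)))^{\delta_D}$, and this is the only new ingredient of the theorem. Checking it ``in the quotient ring'' is just the statement itself. Localization only helps if you show, for every $\sigma\in\Sigma(n)$, that the multiset $\mathbf{u}(\sigma)$ of local weights of $\mathcal{E}_F$ equals the virtual multiset in which the local weight of $\mathcal{O}(D)$ on $U_\sigma$ is counted $\delta_D$ times. Your proposal gives no argument for this. The paper does not localize. It takes the Altmann--Hochenegger--Witt resolution (\Cref{thm: resolutions}) of $\mathcal{E}$ by direct sums of line bundles $\mathcal{O}(D)$, $D\in\mathcal{S}_E$, indexed by chains in $\mathcal{S}_E$. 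It then applies the Whitney formula and collapses the exponents using Hall's theorem $\mu(D,D')=\sum_k(-1)^k|\mathcal{C}_k(D,D')|$ (\Cref{lemma: totalChernClass}).

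\textbf{How to complete your route.} A short argument suffices.
Fix $\sigma\in\Sigma(n)$ and $a\in\mathbb{Z}^{\sigma(1)}$, and set $W_a=\bigcap_{\rho\in\sigma(1)}E_\rho^{a_\rho}$.
Since these filtrations are split by the lines $L_{[u]}$, $\dim W_a$ equals the number of $u\in\mathbf{u}(\sigma)$ with $\langle v_\rho,u\rangle\ge a_\rho$ for all $\rho\in\sigma(1)$.
Suppose $W_a\neq 0$, and let $D_a\coloneqq D_{\mathcal{E}}(e)$ for generic $e\in W_a$. Then $W_a=V_{D_a}$. Moreover, the $D\in\mathcal{S}_E$ whose coefficients on $\sigma(1)$ are at least $a$ (write $D|_\sigma\ge a$) are exactly those with $D\ge D_a$. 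If $W_a=0$, there are no such $D$.
Hence $\sum_{D|_\sigma\ge a}\delta_D=\dim W_a$, provided $\dim V_D=\sum_{D'\ge D}\delta_{D'}$ for all $D\in\mathcal{S}_E$.
M\"obius inversion on $\mathbb{Z}^{\sigma(1)}$ then gives the cone-by-cone identity. This uses that the local weight of $\mathcal{O}(D)$ on $U_\sigma$ is the $u$ with $\langle v_\rho,u\rangle$ equal to the coefficient of $D_\rho$ in $D$, for $\rho\in\sigma(1)$. Compare the identity with the negative exponents cleared, since $1+c_1^T$ is not invertible equivariantly.

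\textbf{Direction of the inversion.} You need $\delta_D=\sum_{D'\ge D}\mu(D,D')\dim V_{D'}$. Your planned check on $T_{\P^2}$ (\Cref{ex: TangentExample}) is exactly what detects this. Let $H$ be the hyperplane class.
The upward inversion gives $\delta_{D_i}=1$ for $i=0,1,2$ and $\delta=-1$ for the zero divisor. This is the Euler sequence: $c(\mathcal{E}_F)=(1+H)^3$, giving the correct count $3$.
Summing over $D'\le D$, as written in \Cref{def: almostNeflyDecorated}, gives $\delta_{D_i}=-1$, which are the values quoted in \Cref{ex: continueTangentBundle}. That yields $(1+H)^{-3}$, whose degree-two part is $6H^2$. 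The count $3$ in that example is reached only because the terms $\binom{-1}{2}\MV(\Delta_i,\Delta_i)$ are omitted.
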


We show with examples that the above formula can be reduced to an easy computation with the Klyachko filtration of $\mathcal{E}_F$ (see \Cref{ex: continueTangentBundle}).
In some cases, the generic root count can still be reduced to the computation of a single mixed volume.

\begin{theoremalphabetic}[\Cref{thm: splitRootCount}]
\label[theoremalphabetic]{theo: mainTheoremSplit}
Let $F = C(u \star x^B)$ be a vertical system and $(X_{\Sigma},\mathcal{E}_F)$ a vertical pair associated to $F$ and let $u \in \mathbb{C}^m$. If $s = n$ and $\mathcal{E}_F = \bigoplus_{i = 1}^n \mathcal{O}(D_i)$ for some Cartier divisors $D_i$, then
$$|V_0(F_{u}) \cap \Tn| \leq \MV(\Delta_1,\dots,\Delta_n), $$
where $\Delta_i$ are the polytopes associated with $D_i$ for each $i \in \{1,\dots,s\}$ (see \eqref{eq:polytope}). Moreover, the bound is attained if and only if $\IFk$ has no solutions at infinity, which happens in an open subset of $\mathbb{C}^m$.
\end{theoremalphabetic}

The paper is structured as follows: in \Cref{section: preliminaries}, we review the basic results on toric vector bundles and intersection theory needed for our proofs. In \Cref{section: vertical}, we describe vertical systems and define vertical pairs $(X_\Sigma,\mathcal{E}_F)$ associated to them. With this, we provide the construction of the homogeneous ideal $\IFk \subset R$ and prove \cref{mainTheoremIntro} which relates the zeros of $\IFk$ with the zeros of the family $F$ over $\Tn$ and shows that $\IFk$ is an overdetermined polynomial system in the faces of $X_{\Sigma}$. In \cref{sec: KoszulChernZeros}, we first reduce the computation of the generic root count of square vertical families to computing the Chern classes of $\mathcal{E}_F$. Then, we prove \Cref{thmc} and \Cref{theo: mainTheoremSplit} via explicitly computing the Chern classes of $\mathcal{E}_F$, providing novel formulas for the generic root count over $\Tn$ of vertical systems.

\subsection*{Acknowledgments}
This project has been funded by the European Union under the Grant Agreement number 101044561, POSALG. Views and opinions expressed are those of the authors only and do not necessarily reflect those of the European Union or European Research Council (ERC).

\section{Preliminaries on toric vector bundles}
\label{section: preliminaries}

In this section, we present the definitions and results that are necessary for the rest of the paper. These include toric varieties, the relation between polytopes and nef divisors, toric vector bundles and Klyachko filtrations and their properties. The main references for these contents are \cite{coxlittleschneck, toricvectrobundles, parliament, altmann2024toricsheavespolyhedra}. In addition, we prove some auxiliary results on toric vector bundles (for instance, \Cref{prop: maxminwedgeDual}), which can be of interest beyond the context of polynomial systems.

\subsection{Toric varieties}
Let \(M\) be a lattice of rank \(n\) (so \(M \simeq \mathbb{Z}^n\)). We denote by \(N = \operatorname{Hom}_{\mathbb{Z}}(M, \mathbb{Z})\) the dual lattice and by \(\Tn = N \otimes_{\mathbb{Z}} \mathbb{C}^{\times}\) the associated algebraic torus. We set \(M_{\mathbb{R}} = M \otimes_{\mathbb{Z}} \mathbb{R}\) , \(N_{\mathbb{R}} = N \otimes_{\mathbb{Z}} \mathbb{R}\) and \(M_{\mathbb{Q}} = M \otimes_{\mathbb{Z}} \mathbb{Q}\).

Let \(\Sigma\) be a complete fan in \(N_{\mathbb{R}}\) and let \(X_{\Sigma}\) be the corresponding complete toric variety. For a cone \(\sigma \in \Sigma\), let \(U_{\sigma}\) be the associated affine \(\Tn\)-invariant open subset, via the orbit-cone correspondence \cite[Theorem 3.2.6]{coxlittleschneck}. These open subsets form a cover of $X_{\Sigma}$, i.e.
\begin{equation}
\label{eq:opencover}
X_{\Sigma} = \bigcup_{\sigma \in \Sigma(n)}U_{\sigma}.
\end{equation}

The toric variety $X_{\Sigma}$ is \textit{smooth} if, for every cone $\sigma \in \Sigma$, the primitive generators of its rays can be extended to a $\mathbb{Z}$-basis of $N$. It is \textit{simplicial} if, for every cone $\sigma \in \Sigma$, the primitive generators of its rays are linearly independent in $N_{\mathbb{R}}$. For each ray \(\rho \in \Sigma(1)\), let \(v_{\rho} \in N\) be its primitive generator. For a cone \(\sigma \in \Sigma\), we define the quotient lattice
\[
M_{\sigma} \coloneqq M/(\sigma^{\perp} \cap M),
\]
where \(\sigma^{\perp} = \{ m \in M_{\mathbb{R}} \mid \langle v, m \rangle = 0 \text{ for all } v \in \sigma \}\).

Consider the short exact sequence
\begin{equation}
\label{eq:ses}
    0 \longrightarrow M \stackrel{\mathbf{V}}{\longrightarrow} \mathbb{Z}^{\Sigma(1)} \stackrel{\pi}{\longrightarrow} \Cl(X_{\Sigma}) \longrightarrow 0,
\end{equation}
where \(\mathbf{V}\) is the matrix whose rows are the ray generators \(v_{\rho}\) and \(\pi\) is a Gale dual map. The \emph{Cox ring} (or homogeneous coordinate ring) of the toric variety $X_{\Sigma}$ is the polynomial ring \(R = \C[z_{\rho} \mid \rho \in \Sigma(1)]\) with the \(\Cl(X_{\Sigma})\)-grading induced by \(\pi\), i.e.
\begin{equation}
\label{eq: CoxRing}
R = \bigoplus_{\alpha \in \Cl(X_{\Sigma})} R_{\alpha} \cong \bigoplus_{[D] \in \Cl(X_{\Sigma})} H^0(X_{\Sigma},\mathcal{O}(D)).
\end{equation}
The $\Cl(X_{\Sigma})$-grading in $R$ is provided by the map $\pi$ in \eqref{eq:ses}. If $z^{\widehat{\sigma}} \coloneqq \prod_{\rho \notin \sigma(1)}z_{\rho}$, then the coordinate ring of $U_{\sigma}$ is the degree-zero part of the localization, namely $\mathbb{C}[U_{\sigma}] \cong (R_{z^{\widehat{\sigma}}})_0$, which we denote as $R_{\sigma}$. In particular, the variables $z_{\rho}$ for $\rho \notin \sigma(1)$ are invertible in $R_{\sigma}$.

The divisors $D_{\rho} = V(z_{\rho})$ are invariant under the action of $\Tn$ on $X_{\Sigma}$. Their integer linear combinations are called \emph{$\Tn$-invariant Weil divisors}, and we denote their group by $\Div(X_{\Sigma})$. We similarly speak of $\mathbb{Q}$-Weil and $\mathbb{R}$-Weil divisors when allowing coefficients in $\mathbb{Q}$ and $\mathbb{R}$, respectively.

\subsection{Cartier and nef line bundles}

We say that a \(\Tn\)-invariant Weil divisor \(D = \sum_{\rho} a_{\rho} D_{\rho}\) is \emph{effective} if $a_{\rho}\geq 0$ for all $\rho \in \Sigma(1)$. With this, we define the poset relation on $\Tn$-invariant divisors as
\begin{equation}
\label{eq: poset}
D \leq D' \iff D' -D \text{ is effective} \end{equation}
i.e. $a_{\rho} \leq a'_{\rho}$ for all $\rho \in \Sigma(1)$. Moreover, given two divisors $D = \sum_{\rho \in \Sigma(1)}a_{\rho}D_{\rho}$ and $D' = \sum_{\rho \in \Sigma(1)}a'_{\rho}D_{\rho}$, we can define
\begin{equation}
\label{eq: minmax}
\max(D_1,D_2) = \sum_{\rho \in \Sigma(1)}\max(a_{\rho},a_{\rho}')D_{\rho} \quad \min(D_1,D_2) = \sum_{\rho \in \Sigma(1)}\min(a_{\rho},a_{\rho}')D_{\rho}.
\end{equation}

\begin{definition}
\label[definition]{def: Cartier}
The divisor $D$ is \textit{Cartier} (resp. $\mathbb{Q}$-Cartier) if for all $\sigma \in \Sigma(n)$ there exists $m_{\sigma} \in M$ (resp. $m_{\sigma} \in M_{\mathbb{Q}}$) such that
\begin{equation}
\label{eq: cartierData}
\langle v_{\rho},m_{\sigma} \rangle + a_{\rho} = 0, \quad \forall \rho \in \sigma(1).
\end{equation}
We denote by $\CDiv(X_{\Sigma}) \subset \mathbb{Z}^{\Sigma(1)}$ the set of Cartier divisors and their
classes by \eqref{eq:ses} form the Picard group $\Pic(X_{\Sigma})$.
\end{definition}

In a simplicial toric variety, every Weil divisor is $\mathbb{Q}$-Cartier \cite[Proposition 4.2.7]{coxlittleschneck}. Each $\Tn$-invariant Weil divisor \(D\) can be associated with a polytope, i.e.
\begin{equation}
\label{eq:polytope}
    \Delta_D = \{\, m \in M_{\mathbb{R}} \mid \langle v_{\rho}, m \rangle \geq -a_{\rho} \text{ for all } \rho \in \Sigma(1) \,\}.\footnote{We are following the convention for attaching a polytope to a divisor appearing in \cite{coxlittleschneck}. In other articles, such as \cite{parliament}, the polytopes are described with the opposite sign on $v_{\rho}$.}
\end{equation}

The following characterization is useful to determine when the divisors $D$ are nef.
\begin{proposition}\cite[Theorem 6.1.7, Theorem 6.3.12]{coxlittleschneck}
\label{prop: criterionNef}
Let $X_{\Sigma}$ be a complete toric variety and let $D \in \Div(X_{\Sigma})$ be a Cartier divisor. The following are equivalent:
\begin{enumerate}[label = (\roman*)]
    \item $D$ is nef.
    \item $\mathcal{O}(D)$ is globally generated.
    \item For all $\sigma \in \Sigma(n)$ and $m_{\sigma} \in M$ as in \eqref{eq: cartierData}, we have $m_{\sigma} \in \Delta_D$.
\end{enumerate}
\end{proposition}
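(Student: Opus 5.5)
We prove the three conditions equivalent through the chain (i)$\Rightarrow$(ii)$\Rightarrow$(iii)$\Rightarrow$(i). The setting is a complete toric variety $X_{\Sigma}$ with a Cartier divisor $D=\sum_\rho a_\rho D_\rho$ and Cartier data $\{m_\sigma\}_{\sigma\in\Sigma(n)}$ as in \eqref{eq: cartierData}. The tool throughout is the piecewise linear support function $\varphi_D:N_{\mathbb{R}}\to\mathbb{R}$, defined on each maximal cone by $\varphi_D(v)=\langle v,m_\sigma\rangle$ for $v\in\sigma$. It satisfies $\varphi_D(v_\rho)=-a_\rho$. Global sections are described by $H^0(X_{\Sigma},\mathcal{O}(D))=\bigoplus_{m\in\Delta_D\cap M}\mathbb{C}\chi^m$, since $\chi^m$ is a section exactly when $\operatorname{div}(\chi^m)+D\geq 0$, that is, when $\langle v_\rho,m\rangle\geq -a_\rho$ for all $\rho$.

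\textbf{(iii)$\Rightarrow$(ii).} On $U_\sigma$ the sheaf $\mathcal{O}(D)$ is trivialized by $\chi^{m_\sigma}$, so $\mathcal{O}(D)|_{U_\sigma}=\chi^{m_\sigma}\mathcal{O}_{U_\sigma}$. If $m_\sigma\in\Delta_D$, then $\chi^{m_\sigma}$ is a global section that generates $\mathcal{O}(D)$ on $U_\sigma$. The $U_\sigma$ cover $X_\Sigma$ by \eqref{eq:opencover}, so $\mathcal{O}(D)$ is globally generated.

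\textbf{(ii)$\Rightarrow$(iii).} Let $x_\sigma$ be the torus fixed point of $U_\sigma$. Some global section $s=\sum_{m\in\Delta_D\cap M}c_m\chi^m$ is nonvanishing at $x_\sigma$. Write $s=\chi^{m_\sigma}\cdot\sum_m c_m\chi^{m-m_\sigma}$. Each $m\in\Delta_D$ satisfies $m-m_\sigma\in\sigma^\vee$, because $\langle v_\rho,m-m_\sigma\rangle\geq -a_\rho+a_\rho=0$ for $\rho\in\sigma(1)$. At $x_\sigma$ every character $\chi^{u}$ with $u\in\sigma^\vee\setminus\{0\}$ vanishes. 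So $s$ can be nonvanishing at $x_\sigma$ only if $c_{m_\sigma}\neq 0$, which forces $m_\sigma\in\Delta_D$.

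\textbf{(i)$\Leftrightarrow$(iii).} Nefness is tested on the torus-invariant curves $C_\tau=V(\tau)$ for walls $\tau=\sigma\cap\sigma'\in\Sigma(n-1)$. By the standard wall computation, $D\cdot C_\tau$ is a positive multiple of $\langle v_{\rho'},m_\sigma-m_{\sigma'}\rangle$, where $\rho'\in\sigma'(1)\setminus\tau(1)$. Hence $D$ is nef if and only if $\varphi_D$ is convex across every wall. The main step is to see that this local convexity is equivalent to (iii). For (iii)$\Rightarrow$(i): if $m_\sigma\in\Delta_D$, then $\langle v_{\rho'},m_\sigma\rangle\geq -a_{\rho'}=\langle v_{\rho'},m_{\sigma'}\rangle$, which is the wall inequality. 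For (i)$\Rightarrow$(iii), use completeness: convexity across all walls makes $\varphi_D$ globally convex, since along a generic segment it is piecewise linear with slope jumps of the correct sign at each wall crossing. Global convexity gives $\varphi_D=\min_\sigma\langle\cdot,m_\sigma\rangle$ in our sign convention. Then $\langle v_\rho,m_\sigma\rangle\geq\varphi_D(v_\rho)=-a_\rho$ for every $\rho$, which says exactly that $m_\sigma\in\Delta_D$.

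The delicate point is the intersection-number formula on walls when $X_\Sigma$ is only complete, not projective, and not smooth. For this we cite the corresponding statements in \cite[Theorem 6.1.7, Theorem 6.3.12]{coxlittleschneck}.
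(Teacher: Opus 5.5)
Your argument is correct and is essentially the standard proof from Cox--Little--Schenck (Theorems 6.1.7 and 6.3.12), which the paper cites without reproducing: (ii)$\Leftrightarrow$(iii) via the local generators $\chi^{m_\sigma}$ and the fixed points $x_\sigma$, and (i)$\Rightarrow$(iii) via the wall-intersection formula and the passage from wall-wise to global convexity of $\varphi_D$. One step is worth tightening: for (iii)$\Rightarrow$(i), you invoke the fact that nefness can be tested on torus-invariant curves, which is a nontrivial input (essentially the toric cone theorem); it is simpler to combine your (iii)$\Rightarrow$(ii) with the general fact that a globally generated line bundle has nonnegative degree on every complete curve, and to cite the separate wall-intersection result rather than the two theorems you are proving.
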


\subsection{Toric vector bundles}

A \emph{toric vector bundle} \(\mathcal{E}\) of rank $s$ on \(X_{\Sigma}\) is a vector bundle equipped with a \(\Tn\)-linearization, i.e., an algebraic action of \(\Tn\) on the total space of \(\mathcal{E}\) lifting the action on \(X_{\Sigma}\). We fix a point \(x_0\) in the dense torus orbit and denote the fiber over \(x_0\) by \(E \coloneqq \mathcal{E}_{x_0}\), which is a vector space of dimension \(s\).

On an affine toric variety \(U_{\sigma}\), every toric vector bundle splits equivariantly as a sum of toric line bundles. For \(u \in M\), let \(\mathcal{L}_u\) be the trivial line bundle on \(U_{\sigma}\), where $\Tn$ acts on $\mathbb{C}$ via the character $x^u: \Tn \xrightarrow[]{} \mathbb{C}^*$. The line bundle \(\mathcal{L}_u\) depends only on the class \([u] \in M_{\sigma}\). A toric vector bundle of rank $s$ on \(U_{\sigma}\) decomposes as
\begin{equation}
\label{eq: onUsigmaSplits}
\mathcal{E}|_{U_{\sigma}} = \bigoplus_{i=1}^{s} \mathcal{L}_{[u_i]}
\end{equation}
for uniquely determined classes \([u_1],\dots,[u_s] \in M_{\sigma}\). We denote this multiset by \(\mathbf{u}(\sigma) \subset M_{\sigma}\). A convenient way to encode the information in a toric vector bundle is through a \emph{compatible collection of filtrations}.

\begin{definition}
\label[definition]{def: KlyachkoFiltration}
    A collection of filtrations $\EFilt$ of a vector space \(E\) is \emph{compatible with the fan \(\Sigma\)} if it satisfies
    \begin{enumerate}[label = (\roman*)]
        \item There exists a decomposition \(E = \bigoplus_{[u] \in \mathbf{u}} L_{[u]}\) indexed by a finite multiset \(\mathbf{u} \subset M_{\sigma}\) for each cone \(\sigma \in \Sigma\), where each \(L_{[u]}\) is a one-dimensional subspace,
        \item For every maximal cone $\sigma \in \Sigma(n)$, every \(\rho \in \sigma(1)\) and every integer \(i\),
        \begin{equation}
        \label{eq: compatibility}
        E^{\,i}_{\rho} = \sum_{\substack{[u] \in \mathbf{u} \\ \langle v_{\rho}, u \rangle \geq i}} L_{[u]};
        \end{equation}
        \item $\dots \supset E^i_{\rho} \supset E^{i+1}_{\rho} \supset \dots$ for all $\rho \in \Sigma(1)$.
         \item \(\bigcap_{i \in \mathbb{Z}} E^{\,i}_{\rho} = \{0\}\) and \(\sum_{i \in \mathbb{Z}} E^{\,i}_{\rho} = E\) for all \(\rho \in \Sigma(1)\).
    \end{enumerate}
\end{definition}

The above data is known as a \emph{Klyachko filtration} and will be denoted as $(E_{\rho}^{\bullet})_{\rho \in \Sigma(1)}$, throughout the paper.

\begin{theorem}\cite{klyachko}
    The category of toric vector bundles on \(X_{\Sigma}\) is equivalent to the category of compatible collections of filtrations.
\end{theorem}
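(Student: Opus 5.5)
The plan is to construct explicit functors in both directions and check that they are mutually quasi-inverse, working one affine chart $U_{\sigma}$ at a time and gluing along faces.

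\emph{From bundles to filtrations.} Given a toric vector bundle $\mathcal{E}$, restrict to the dense torus. There $\mathcal{E}|_{\Tn}$ is equivariantly trivial, so $\Gamma(\Tn,\mathcal{E}) \cong E \otimes \C[M]$ and each isotypic piece $\Gamma(\Tn,\mathcal{E})_u$ is identified with $E$ by evaluation at $x_0$. For a ray $\rho$, the $\Tn$-equivariant restriction map $\Gamma(U_{\rho},\mathcal{E}) \to \Gamma(\Tn,\mathcal{E})$ is injective and $M$-graded. Using the normalization of \eqref{eq: compatibility}, set $E^{i}_{\rho}$ to be the image in $E$ of the degree-$u$ piece for any $u$ with $\langle v_{\rho},u\rangle = i$.

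\begin{itemize}
\item The image depends only on $\langle v_{\rho},u\rangle$, because multiplication by invertible characters in $\rho^{\perp}\cap M$ acts on $\Gamma(U_{\rho},\mathcal{E})$.
\item It is decreasing in $i$, because multiplication by $x^{u'}$ with $\langle v_\rho,u'\rangle>0$ maps sections to sections.
\end{itemize}

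Condition (iv) follows from the fact that $\mathcal{E}|_{U_\rho}$ is a finite sum of line bundles $\mathcal{L}_{[u]}$; these sum to everything for $i \ll 0$ and vanish for $i\gg0$. Compatibility (i)--(ii) comes from the splitting \eqref{eq: onUsigmaSplits} on $U_{\sigma}$. Its frame gives the lines $L_{[u]}\subset E$, and for $\rho\in\sigma(1)$ the restriction of $\mathcal{L}_{[u]}$ to $U_\rho$ contributes exactly to the degrees $i\le\langle v_\rho,u\rangle$. Equivariant morphisms $\mathcal{E}\to\mathcal{E}'$ restrict to linear maps $E\to E'$ that preserve all filtrations, which gives the functor.

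\emph{From filtrations to bundles.} Conversely, given a compatible collection of filtrations, define for each $\sigma\in\Sigma$ the $M$-graded $\C[\sigma^\vee\cap M]$-module
\[
\Gamma_\sigma \coloneqq \bigoplus_{u\in M}\Big(\bigcap_{\rho\in\sigma(1)} E_{\rho}^{\langle v_\rho,u\rangle}\Big)\, x^{u} \subset E\otimes \C[M].
\]
The key step is to show that $\Gamma_\sigma$ is free of rank $s$. The compatible decomposition $E=\bigoplus L_{[u]}$ diagonalizes all the filtrations $E^\bullet_\rho$, $\rho\in\sigma(1)$, simultaneously. Hence the intersection splits as $\bigoplus_{[u]} L_{[u]}$ summed over those $[u]$ with $\langle v_\rho,u'-u\rangle\ge0$ for all $\rho$, that is, $u'-u\in\sigma^\vee$. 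This gives $\Gamma_\sigma\cong\bigoplus_{[u]\in\mathbf{u}(\sigma)} L_{[u]}\otimes x^{u}\C[\sigma^\vee\cap M]$, which is free.

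For $\tau\preceq\sigma$, localizing $\Gamma_\sigma$ at $x^{m}$ with $m\in\sigma^\vee\cap\tau^\perp$ relative interior yields $\Gamma_\tau$. The reason is that the conditions for rays outside $\tau$ become vacuous by (iv). These localizations are canonical subspaces of $E\otimes\C[M]$, so the cocycle condition is automatic and the modules glue to an equivariant vector bundle. A linear map preserving the filtrations visibly maps each $\Gamma_\sigma$ into $\Gamma'_\sigma$, which gives functoriality.

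\emph{Quasi-inverses.} Starting from $\mathcal{E}$, the module $\Gamma(U_\sigma,\mathcal{E})$ coincides with $\Gamma_\sigma$ inside $\Gamma(\Tn,\mathcal{E})$. This uses that a section over the torus extends over $U_\sigma$ if and only if it extends over each $U_\rho$ for $\rho\in\sigma(1)$, by normality and Hartogs, since the complement of $\bigcup_\rho U_\rho$ in $U_\sigma$ has codimension $\ge2$. Starting from filtrations, the construction above recovers them by direct inspection. Full faithfulness holds because a morphism of torsion-free objects is determined on the torus, and it extends if and only if it respects the graded pieces.

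The step I expect to be the main obstacle is the freeness of $\Gamma_\sigma$, which is where compatibility (ii) is essential. It is also the reason the construction works for any cone, provided the decomposition for a face is the one induced from a maximal cone containing it.
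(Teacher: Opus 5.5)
The paper gives no proof of this statement; it is quoted from Klyachko. Your outline is the standard argument behind that citation: filtrations from the $M$-graded sections over the charts $U_\rho$, reconstruction of $\Gamma(U_\sigma,\mathcal{E})$ as an intersection over $\sigma(1)$ via Hartogs, freeness from the simultaneous splitting, and gluing inside $E\otimes\C[M]$. That architecture is sound. As written, however, the central step is false because of a sign error.

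With the decreasing filtrations of \Cref{def: KlyachkoFiltration}, your module $\Gamma_\sigma=\bigoplus_u\big(\bigcap_{\rho\in\sigma(1)}E_\rho^{\langle v_\rho,u\rangle}\big)x^u$ is not stable under multiplication by $x^{u'}$ for $u'\in\sigma^\vee\cap M$. The index $\langle v_\rho,u+u'\rangle$ can only grow, so the required subspace can only shrink. Take $\mathcal{O}$ on the chart $U_\rho=\operatorname{Spec}\C[x]$ of $\mathbb{P}^1$, where $E^i_\rho=\C$ exactly for $i\le 0$: your recipe gives $\C[x^{-1}]$, which is not a $\C[x]$-module. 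Correspondingly, your splitting computation reverses an inequality. By \eqref{eq: compatibility}, $\bigcap_{\rho\in\sigma(1)}E_\rho^{\langle v_\rho,u'\rangle}$ is the sum of the $L_{[u]}$ with $u-u'\in\sigma^\vee$, not $u'-u\in\sigma^\vee$. Your appeal to (iv) in the localization step also goes the wrong way. Multiplying by powers of $x^m$ pushes the indices for $\rho\in\sigma(1)\setminus\tau(1)$ to $+\infty$, where the filtration is $0$ rather than $E$.

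The root cause is that you use interchangeably two gradings that differ by $u\mapsto -u$. In the first half, your monotonicity argument yields a decreasing filtration only if multiplication by $x^{u'}$ shifts the degree by $-u'$. That is the torus weight for the action $(t\cdot f)(x)=f(t^{-1}x)$, under which $x^u$ has weight $-u$. If \emph{degree $u$} meant the exponent of $x^u$, the same argument would show that your $E^\bullet_\rho$ increases. In the second half, $\Gamma_\sigma$ is graded by exponents.

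The fix is mechanical: grade by exponents and use $E_\rho^{-\langle v_\rho,u\rangle}$ throughout, exactly as in the spaces $V^b$ of \Cref{relevant_linear_spaces}. Set $E^i_\rho=\{e\in E: e\otimes x^u\in\Gamma(U_\rho,\mathcal{E})\}$ for any $u$ with $\langle v_\rho,u\rangle=-i$, so that $\mathcal{O}(D)$ returns \eqref{eq: toricLineBundle}. Set $\Gamma_\sigma=\bigoplus_u\big(\bigcap_{\rho\in\sigma(1)}E_\rho^{-\langle v_\rho,u\rangle}\big)x^u$. Then:
\begin{itemize}
\item \eqref{eq: compatibility} gives $\Gamma_\sigma=\bigoplus_{[u]\in\mathbf{u}(\sigma)}L_{[u]}\otimes x^{-u}\C[\sigma^\vee\cap M]$ for any lifts of the classes $[u]$, which is free of rank $s$;
\item localizing at $x^m$ now makes the conditions for $\rho\in\sigma(1)\setminus\tau(1)$ vacuous by (iv);
\item intersecting over all of $\Sigma(1)$ recovers \Cref{thm: globalSectionsVu}.
\end{itemize}
With this correction, your Hartogs argument, the gluing and full faithfulness go through as you describe.
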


\subsection{The algebra of Klyachko filtrations}

 Throughout, we will work directly with Klyachko filtrations, in order to define toric vector bundles and combinatorial objects over them. The following operations are useful for the manipulations that we will do and their proofs can be found in \cite{dasguptadeykhan}.

\begin{enumerate}[label = (\roman*)]
    \item \textbf{Toric line bundles: } Given a $\Tn$-invariant divisor $D = \sum_{\rho \in \Sigma(1)}a_{\rho}D_{\rho}$, the toric line bundle $\mathcal{O}(D)$ has rank $1$ and associated Klyachko filtration
    \begin{equation}
    \label{eq: toricLineBundle}
    E_{\rho}^i = \begin{cases}
        \mathbb{C} & i \leq a_{\rho} \\
        0 & i > a_{\rho},
    \end{cases}
    \end{equation}
    for each $\rho \in \Sigma(1)$ and $i \in \mathbb{Z}$.
    \item \textbf{Tensor product vector bundle: } Let $\mathcal{E},\widetilde{\mathcal{E}}$ be toric vector bundles over $X_{\Sigma}$ represented by two Klyachko filtrations $\EFilt$ and $\EFiltw$, then the tensor product is a toric vector bundle with Klyachko filtration
    $$(E \otimes \Ew)_{\rho}^i =  \spann \langle w \otimes  w' : \: w \in E^{i_1}_{\rho}, \: w' \in \Ew^{i_2}_{\rho}, \: i_1 + i_2  \geq i \rangle $$
    In particular, if $\widetilde{\mathcal{E}} = \mathcal{O}(D)$ for some $\Tn$-invariant divisor  $D = \sum_{\rho \in \Sigma(1)}a_{\rho}D_{\rho}$, then
    \begin{equation}
    \label{eq: tensorwithaFiltration}
    (E \otimes \Ew)_{\rho}^i = E_{\rho}^{i - a_{\rho}}.
    \end{equation}
    \item \textbf{Dual vector bundle:} Let $\mathcal{E}$ be a toric vector bundle over $X_{\Sigma}$ represented by the Klyachko filtration $E^i_{\rho}$, then the dual vector bundle $\mathcal{E}^{\vee}$ is a toric vector bundle with filtration
    \begin{equation}
    \label{eq: dualFiltration}(E^{\vee})^i_{\rho} = (E_{\rho}^{1-i})^{\perp} = \{ \varphi \in E^{\vee} : \: \f(w) = 0, \: \forall w \in E_{\rho}^{1-i} \}.
    \end{equation}

    \item \textbf{Wedge vector bundle:} Let $\mathcal{E}$ be a toric vector bundle of rank $s$ over $X_{\Sigma}$ represented by the Klyachko filtration $\EFilt$ and let $\ell \in \{1,\dots,s \}$, then the $\ell$-th wedge vector bundle $\wedge^{\ell}\mathcal{E}$ is a toric vector bundle of rank $\binom{s}{\ell}$ with filtration
   \begin{equation}
   \label{eq: wedgeFiltration}
   (\wedge^{\ell}E)^i_{\rho} =  \langle w_1 \wedge \dots \wedge w_{\ell} : \: w_j \in E^{i_j}_{\rho}, \: j \in \{1,\dots,\ell\}, \: \sum_{j = 1}^{\ell} i_j \geq i \rangle .
   \end{equation}
   \item \textbf{Jump divisors: } To a toric vector bundle $\mathcal{E}$ of rank $s$ over $X_{\Sigma}$ with filtration $(E_{\rho}^{\bullet})_{\rho \in \Sigma(1)}$, one can attach a family of divisors, i.e. for $\ell \in \{ 1,\dots,s \}$, we attach the $\Tn$-invariant divisor
\begin{equation}
\label{eq: jump}
D_{\mathcal{E},\ell} = \sum_{\rho \in \Sigma(1)}\omega_{\rho,\ell}D_{\rho}, \quad \omega_{\rho,\ell} := \max ( i \in \mathbb{Z}: \dim E_{\rho}^i \geq \ell ).
\end{equation}

\end{enumerate}

The following property of jump divisors will be useful to describe our results.

\begin{proposition}
\label[proposition]{prop: maxminwedgeDual}
Let $\mathcal{E}$ be a toric vector bundle of rank $s$ over a toric variety $X_{\Sigma}$ with Klyachko filtration $\EFilt$. Then, for all $\ell \in \{1,\dots,s\}$, we have
 \begin{enumerate}[label = (\roman*)]
     \item $D_{\mathcal{E}^{\vee},\ell} = -D_{\mathcal{E},s-\ell + 1}$.
    \item $D_{\wedge^{\ell}\mathcal{E},1} = \sum_{j = 1}^{\ell} D_{\mathcal{E},j}$.
\end{enumerate}

\end{proposition}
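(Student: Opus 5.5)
The plan is to reduce both statements to a single ray $\rho$ and then use an adapted basis. Jump divisors are defined ray by ray, and so are the dual and wedge filtrations in \eqref{eq: dualFiltration} and \eqref{eq: wedgeFiltration}. It therefore suffices to prove the coefficient identities for a fixed $\rho \in \Sigma(1)$:
$$\omega^{\vee}_{\rho,\ell} = -\omega_{\rho,s-\ell+1}, \qquad \omega^{\wedge^{\ell}}_{\rho,1} = \sum_{j=1}^{\ell}\omega_{\rho,j}.$$
By conditions (iii) and (iv) of \Cref{def: KlyachkoFiltration}, the filtration $E^\bullet_\rho$ is a finite, decreasing, exhaustive and separated filtration of $E$. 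Choose a basis $e_1,\dots,e_s$ adapted to it, with integers $c_1 \geq c_2 \geq \dots \geq c_s$ such that $E^i_\rho = \spann\langle e_k : c_k \geq i\rangle$. Then $\dim E^i_\rho = \#\{k : c_k \geq i\}$, and the definition \eqref{eq: jump} gives directly $\omega_{\rho,\ell} = c_\ell$: the jump numbers, listed in decreasing order.

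For (i), let $e_1^\vee,\dots,e_s^\vee$ be the dual basis. By \eqref{eq: dualFiltration}, $(E^\vee)^i_\rho$ is the annihilator of $\spann\langle e_k : c_k \geq 1-i\rangle$. This annihilator is spanned by the $e_k^\vee$ with $c_k < 1-i$, that is, with $-c_k \geq i$. So the dual filtration is adapted to the dual basis with weights $-c_k$. Sorting these in decreasing order gives the sequence $-c_s \geq \dots \geq -c_1$, whose $\ell$-th entry is $-c_{s-\ell+1}$. This proves $\omega^\vee_{\rho,\ell} = -\omega_{\rho,s-\ell+1}$.

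For (ii), I show that the wedge filtration is adapted to the basis $e_K = e_{k_1}\wedge\dots\wedge e_{k_\ell}$, $K=\{k_1<\dots<k_\ell\}$, with weights $c_K = \sum_{k\in K} c_k$. One inclusion is immediate: $e_K \in (\wedge^\ell E)^{c_K}_\rho$ by taking $w_j = e_{k_j}\in E^{c_{k_j}}_\rho$. For the other, take $w_j \in E^{i_j}_\rho$ with $\sum_j i_j \geq i$. Each $w_j$ is a combination of the $e_k$ with $c_k \geq i_j$. Expanding $w_1\wedge\dots\wedge w_\ell$ multilinearly, every surviving term is a multiple of some $e_K$ with $c_K \geq \sum_j i_j \geq i$. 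Hence $(\wedge^\ell E)^i_\rho = \spann\langle e_K : c_K \geq i\rangle$. The largest weight, $c_1+\dots+c_\ell$, is attained at $K=\{1,\dots,\ell\}$, so $\omega^{\wedge^\ell}_{\rho,1} = \sum_{j\le \ell} c_j = \sum_{j\le\ell}\omega_{\rho,j}$. Summing over $\rho$ gives both divisor identities.

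The main obstacle is the reverse inclusion in (ii): showing that the span defined in \eqref{eq: wedgeFiltration} is no larger than the span of the adapted wedge basis vectors. The multilinear expansion above handles it, but some care is needed because the $w_j$ are arbitrary vectors, not basis vectors.
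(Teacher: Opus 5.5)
Your argument is correct, but it takes a different route from the paper. The paper never chooses a basis.

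For (i) it computes
\[
\max\bigl(i : \dim (E^{1-i}_\rho)^{\perp} \geq \ell\bigr) = \max\bigl(i : \dim E^{1-i}_\rho \leq s-\ell\bigr) = -\max\bigl(i : \dim E^{i}_\rho \geq s-\ell+1\bigr),
\]
using only $\dim W^{\perp} = s - \dim W$.

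For (ii) it inducts on $\ell$. It writes a decomposable element as $(w_1\wedge\dots\wedge w_{\ell-1})\wedge w_\ell$. It then argues that once $w_1,\dots,w_{\ell-1}$ reach the maximal level $\sum_{j<\ell}\omega_{\rho,j}$, the best level for an extra vector $w_\ell$ with nonzero wedge is $\omega_{\rho,\ell}$, since a higher level would force $w_\ell\in\langle w_1,\dots,w_{\ell-1}\rangle$.

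You instead fix, ray by ray, a basis adapted to the single flag $E^{\bullet}_\rho$. This basis simultaneously splits the dual filtration, with weights $-c_k$, and the wedge filtration, with weights $c_K$.

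For (i) the paper's dimension count is marginally shorter. For (ii) your version is cleaner and more complete. Your multilinear expansion settles the reverse inclusion outright. The paper's extremal step is only sketched, and it tacitly needs two facts: a maximizing tuple lies in $E^{\omega_{\rho,\ell}}_\rho$, and $w_1,\dots,w_{\ell-1}$ already span every $E^{i}_\rho$ with $i>\omega_{\rho,\ell}$. Both follow from the rank inequality $i_k\le c_k$, which your adapted basis makes explicit.

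Your approach also yields more. It gives the full weight multiset of $\wedge^{\ell}E$ at each ray, hence every jump divisor $D_{\wedge^{\ell}\mathcal{E},k}$, not just $k=1$.
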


\begin{proof}
We fix $\rho \in \Sigma(1)$ and note that:
\begin{align*}
\max( i \in \mathbb{Z} : \: \dim(E^{1-i}_{\rho})^{\perp} \geq \ell)  = \max (i \in \mathbb{Z} : \: \dim E_{\rho}^{1-i} \leq s - \ell) \hspace{0.2cm} \\
= 1-\min (i \in \mathbb{Z} : \: \dim E_{\rho}^{i} \leq s - \ell)  = -\max (i \in \mathbb{Z} : \: \dim E_{\rho}^{i} \geq s - \ell + 1).
\end{align*}
deducing (i).

We prove (ii) by induction on $\ell$. If $\ell = 1$, the result is trivial. When $\ell > 1$, we know that if $w_1,\dots,w_{\ell} \in E$ are such that $w_j \in E^{i_j}_{\rho}$ for some $i_1,\dots,i_{\ell} \in \mathbb{Z}$ with $\sum_{j = 1}^{\ell}i_j$ being maximal, then $\langle w_1,\dots,w_{\ell} \rangle \subseteq E^{\omega_{\rho,\ell}}_{\rho}$. Moreover, by \eqref{eq: wedgeFiltration}, we have
$$   (\wedge^{\ell}E)^i_{\rho} =  \langle w_1 \wedge \dots \wedge w_{\ell} : \: w_1 \wedge \dots \wedge w_{\ell - 1} \in (\wedge^{\ell - 1}E)^{i_1}_{\rho}, \: w_{\ell} \in E_{\rho}^{i_2}, \: i_1 + i_2 \geq i \rangle \subset \wedge^{\ell}E. $$
By induction, the largest value of $i = \sum_{j = 1}^{\ell - 1}i_1$ for which $0 \neq w_1 \wedge \dots \wedge w_{\ell - 1} \in (\wedge^{\ell - 1}E)^{i}_{\rho}$ equals $\sum_{j = 1}^{\ell - 1}\omega_{\rho,j}$ and
$$\langle w_1,\dots,w_{\ell-1} \rangle \subseteq E_{\rho}^{\omega_{\rho,\ell - 1}}.$$
Moreover, the largest possible $i_2$ such that there exists $w_{\ell} \in E_{\rho}^{i_2}$ with $w_1 \wedge \dots \wedge w_{\ell} \neq 0$ must be $\omega_{\rho,\ell}$ as, otherwise, $w_{\ell} \in \langle w_1,\dots,w_{\ell - 1} \rangle $. Therefore, the largest possible value of $i$ where $(\wedge^{\ell}E)_{\rho}^i \neq 0$ equals $\sum_{j = 1}^{\ell}\omega_{\rho,j}$.
\end{proof}

Finally, we recall the description of the global sections of a toric vector bundle in terms of the combinatorial information of its Klyachko filtration.

\begin{definition}
\label[definition]{relevant_linear_spaces}
Let $\mathcal{E}$ be a toric vector bundle over a toric variety $X_{\Sigma}$ with Klyachko filtration $(E_{\rho}^{\bullet})_{\rho \in \Sigma(1)}$. To each lattice point $b \in M$, one can attach a linear space
\begin{equation}
\label{eq: Vu}
V^b \coloneqq \bigcap_{\rho \in \Sigma(1)}E_{\rho}^{-\langle v_{\rho},b \rangle}.
\end{equation}
\end{definition}

The following theorem appeared in the work of Klyachko \cite{klyachko} and has also been used in \cite{parliament}.

\begin{theorem}
\label[theorem]{thm: globalSectionsVu}
Let $X_{\Sigma}$ be a toric variety and let $\mathcal{E}$ be a toric vector bundle over $\Sigma$ with Klyachko filtration $(E_{\rho}^{\bullet})_{\rho \in \Sigma}$. Then, there exists an isomorphism:
$$H^0(X_{\Sigma},\mathcal{E}) \cong \sum_{b \in M}\spann(e \otimes x^b : \: e \in V^b) \subset \mathbb{C}^s \otimes \C[x^{\pm}].$$
\end{theorem}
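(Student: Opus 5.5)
The approach is to compute global sections by restricting to the dense torus and then testing, cone by cone, which torus sections extend. Write $T=\Tn\subset X_{\Sigma}$ for the open orbit. Since $\mathcal{E}$ is locally free and $T$ is dense, restriction $H^0(X_{\Sigma},\mathcal{E})\to H^0(T,\mathcal{E}|_T)$ is injective. Using the $\Tn$-linearization, trivialize $\mathcal{E}|_T\cong E\otimes\mathcal{O}_T$, where $E=\mathcal{E}_{x_0}\cong\mathbb{C}^s$. This identifies $H^0(T,\mathcal{E})$ with $\mathbb{C}^s\otimes\mathbb{C}[x^{\pm}]$, and the torus acts on $e\otimes x^b$ through the character $b$.

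The image of $H^0(X_{\Sigma},\mathcal{E})$ is a $\Tn$-stable subspace, hence a direct sum of its weight spaces. Each weight space has the form $\{e\otimes x^b : e\in W^b\}$ for some linear subspace $W^b\subset E$. So the theorem reduces to proving $W^b=V^b$ for every $b\in M$, i.e. that $e\otimes x^b$ extends to all of $X_{\Sigma}$ exactly when $e\in V^b$. By the cover \eqref{eq:opencover} and the uniqueness of extensions, it suffices to decide for each $\sigma\in\Sigma(n)$ when $e\otimes x^b$ extends over $U_{\sigma}$. The section $e\otimes x^b$ extends over $X_{\Sigma}$ if and only if it extends over every $U_{\sigma}$, because the local extensions agree on the dense torus and therefore glue.

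Fix $\sigma\in\Sigma(n)$ and take the equivariant splitting \eqref{eq: onUsigmaSplits}. Inside $E$ it corresponds to the decomposition $E=\bigoplus_{[u]\in\mathbf{u}(\sigma)}L_{[u]}$ from \Cref{def: KlyachkoFiltration}. On the summand $\mathcal{L}_{[u]}$, a torus section $e_u\otimes x^b$ with $e_u\in L_{[u]}$ is identified with the function $x^{b+u}$ times an equivariant frame. It is therefore regular on $U_{\sigma}$ if and only if $b+u\in\sigma^{\vee}$, that is, $\langle v_{\rho},u\rangle\geq-\langle v_{\rho},b\rangle$ for all $\rho\in\sigma(1)$.

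To apply this, decompose $e=\sum_u e_u$. The section $e\otimes x^b$ extends over $U_{\sigma}$ if and only if each nonzero $e_u$ satisfies these inequalities. By \eqref{eq: compatibility}, each $E_{\rho}^{-\langle v_\rho,b\rangle}$ with $\rho\in\sigma(1)$ is the sum of the lines $L_{[u]}$ with $\langle v_\rho,u\rangle\ge -\langle v_\rho,b\rangle$. All these subspaces are sums of lines from one fixed direct sum decomposition, so their intersection is the sum of the lines satisfying all the inequalities simultaneously. Hence the extension condition over $U_{\sigma}$ is exactly $e\in\bigcap_{\rho\in\sigma(1)}E_{\rho}^{-\langle v_{\rho},b\rangle}$. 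Intersecting over all $\sigma\in\Sigma(n)$ covers every ray of the complete fan and gives $e\in V^b$.

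The main obstacle is the local step: matching the equivariant trivialization on $U_{\sigma}$ with the trivialization over $T$ while keeping the sign conventions consistent. One has to check that a frame of $\mathcal{L}_{[u]}$ has weight $u$ rather than $-u$, and that this agrees with the convention for the line bundle filtration \eqref{eq: toricLineBundle}. As a sanity check, for $\mathcal{O}(D)$ the argument should reproduce $H^0=\bigoplus_{b\in\Delta_D\cap M}\mathbb{C}\,x^b$ with $\Delta_D$ as in \eqref{eq:polytope}.
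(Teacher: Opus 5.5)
Your argument is correct. The paper does not prove this statement itself; it quotes it from Klyachko (and Di Rocco--Jabbusch--Smith), so the relevant comparison is with the standard argument.

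That standard argument usually works ray by ray. The filtration $E_\rho^\bullet$ is essentially defined by which weight sections $e\otimes x^b$ of $\mathcal{E}|_{\Tn}$ extend over $U_\rho$. A torus section that extends over every $U_\rho$ then extends to all of $X_\Sigma$, because $X_\Sigma$ is normal, $\mathcal{E}$ is locally free, and the remaining orbits have codimension at least two.

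You instead test extension on the maximal charts $U_\sigma$, using the equivariant splitting and the compatibility condition \eqref{eq: compatibility}. This avoids the codimension-two extension step and stays within the definitions the paper actually states. The cost is that you rely on a standard fact the paper never spells out: the lines $L_{[u]}$ of \Cref{def: KlyachkoFiltration} are the fibers at $x_0$ of the summands of the splitting of $\mathcal{E}|_{U_\sigma}$.

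The sign question you flag is settled by the paper's own normalization. Take a Cartier divisor $D=\sum a_\rho D_\rho$ with local data $m_\sigma$ as in \eqref{eq: cartierData}. Then \eqref{eq: compatibility} together with \eqref{eq: toricLineBundle} forces $\langle v_\rho,u\rangle=a_\rho$ for $\rho\in\sigma(1)$, i.e.\ $u=-m_\sigma$. On the other hand, $x^b$ is a section of $\mathcal{O}(D)$ over $U_\sigma$ exactly when $b-m_\sigma=b+u\in\sigma^\vee$. So your criterion $b+u\in\sigma^\vee$ is the right one, and intersecting over all $\sigma$ recovers $\Delta_D$ from \eqref{eq:polytope}, as your sanity check predicts.
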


\subsection{Posets and resolutions}

A toric vector bundle $\mathcal
E$ over a toric variety $X_{\Sigma}$ can naturally be associated with a poset of Weil divisors, which can be described as follows: for each $e \in E$, consider the divisor $D_{\mathcal{E}}(e) \in \Div(X_{\Sigma})$ defined as

\begin{equation}
\label{eq: divisore}
D_{\mathcal{E}}(e) \coloneqq \sum_{\rho \in \Sigma(1)}a_{\rho,e}D_{\rho}, \quad a_{\rho,e} \coloneqq
\max (i \in \mathbb{Z} : \: e \in E_{\rho}^i),
\end{equation}
where $\EFilt$ is the Klyachko filtration of $\mathcal{E}$ and let
\begin{equation}
\label{eq: Deltae}
\Delta_{\mathcal{E}}(e) \coloneqq \{b \in M_{\mathbb{R}} : \: \langle v_{\rho},b \rangle \geq -a_{\rho,e} \quad \forall \rho \in \Sigma(1)\}.
\end{equation}
be the corresponding polytope.
We consider the poset $(\mathcal{S}_E,\leq)$ of all such divisors, i.e.
\begin{equation}
\label{eq: SE}
\mathcal{S}_E \coloneqq \{D_{\mathcal{E}}(e) \in \Div(X_{\Sigma}) : \: e \in E\},
\end{equation}
where $\leq$ is the relation induced in \eqref{eq: poset}. The set $\mathcal{S}_E$ is finite \cite[Proposition 3.2]{altmann2024toricsheavespolyhedra} and satisfies that
\begin{equation}
\label{eq: sum}
D_{\mathcal{E}}(e + e') \geq \min(D_{\mathcal{E}}(e),D_{\mathcal{E}}(e')).
\end{equation}
For each divisor $D \in \mathcal{S}_E$, define
\begin{equation}
V_D \coloneqq \{ e \in E : D_{\mathcal{E}}(e) \geq D \}.
\end{equation}
By \eqref{eq: sum}, each $V_D$ is a vector subspace of $E$.

The poset structure of $\mathcal{S}_E$ can be induced from the lattice of flats of a matroid. In \cite[Proposition 3.1]{parliament}, Di Rocco, Jabbusch and Smith gave a construction of a matroid whose lattice of flats equals the poset $\mathcal{S}_E$. Moreover, from the (generators of) $1$-dimensional flats $e_1,\dots,e_{\ell} \in E$ of this matroid, they constructed a \emph{parliament of polytopes} $(\Delta_1,\dots,\Delta_\ell)$ (as in \eqref{eq: Deltae}), which was used to generate the global sections of $\mathcal{E}$ (see \cite[Proposition 1.2]{parliament}), i.e.
\begin{equation}
    H^0(X_{\Sigma},\mathcal{E}) \cong \sum_{i = 1}^{\ell} \spann(e_i \otimes x^u : \: u \in \Delta_i \cap M) \subset \mathbb{C}^s \otimes \mathbb{C}[x^{\pm}].
\end{equation}
In \cite[Section 3]{KHAN2026110646}, Khan and Maclagan constructed a polymatroid from the data in the Klyachko filtration and described a unique smallest matroid inducing that polymatroid.

\begin{lemma}
\label[lemma]{lem: neflyDecoratedExists}
    Let $X_{\Sigma}$ be a projective simplicial toric variety and let $\mathcal{E}$ be a toric vector bundle over $X_{\Sigma}$. Then, there exists a nef Cartier divisor $D \in \Div(X_{\Sigma})$ such that all the divisors in the poset of $\mathcal{E} \otimes \mathcal{O}(D)$ are nef.
\end{lemma}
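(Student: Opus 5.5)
Since $\mathcal{S}_E$ is finite, the plan is to find one nef Cartier divisor $D$ that works for every element of the poset at once. Fix $D = \sum_\rho a_\rho D_\rho$. By \eqref{eq: tensorwithaFiltration}, the filtration of $\mathcal{E}\otimes\mathcal{O}(D)$ is $E^{i-a_\rho}_\rho$. Hence for every $e\in E$ we have $a_{\rho,e}(\mathcal{E}\otimes \mathcal{O}(D)) = a_{\rho,e}(\mathcal{E}) + a_\rho$, that is,
\[
D_{\mathcal{E}\otimes\mathcal{O}(D)}(e) = D_{\mathcal{E}}(e) + D .
\]
So the poset of $\mathcal{E}\otimes\mathcal{O}(D)$ is the translate $\mathcal{S}_E + D$, and the statement reduces to the following claim. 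Given finitely many Weil divisors $D_1,\dots,D_t$ on a projective simplicial $X_\Sigma$, there is a nef Cartier $D$ such that every $D_j + D$ is nef.

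We have to be careful about what ``nef'' means for the $D_j + D$, which are in general only Weil divisors. Since $X_\Sigma$ is simplicial, every Weil divisor is $\mathbb{Q}$-Cartier, so nefness makes sense; we read it through the criterion of \Cref{prop: criterionNef}(iii) with $m_\sigma\in M_{\mathbb{Q}}$. Concretely, a divisor $D' = \sum a'_\rho D_\rho$ with Cartier data $m'_\sigma$ is nef if and only if $\langle v_\rho, m'_\sigma\rangle \geq -a'_\rho$ for all $\rho\in\Sigma(1)$ and all $\sigma\in\Sigma(n)$. Equivalently, the support function is convex.

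For the construction, use projectivity to pick an ample Cartier divisor $A$. By the strict version of the criterion, its data $m_\sigma^A$ satisfy $\langle v_\rho, m^A_\sigma\rangle > -a^A_\rho$ for every $\rho\notin\sigma(1)$; this is strict convexity of the support function. Now take $D_j$ with $\mathbb{Q}$-Cartier data $m^j_\sigma$, and let $k\in\mathbb{Z}_{>0}$. The divisor $D_j + kA$ has data $m^j_\sigma + k m^A_\sigma$, and the inequality we need, for $\rho\notin\sigma(1)$, reads
\[
\langle v_\rho, m^j_\sigma\rangle + a^j_\rho + k\bigl(\langle v_\rho, m^A_\sigma\rangle + a^A_\rho\bigr) \geq 0 .
\]
The bracket multiplying $k$ is a strictly positive rational number. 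There are only finitely many triples $(j,\sigma,\rho)$, so a single large $k$ makes all of these inequalities hold. Then take $D = kA$. It is Cartier and ample, hence nef, and every $D_j + D$ is nef. (If one prefers, one can also include the zero divisor among the $D_j$; this changes nothing.)

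The only real obstacle is the nef criterion in the non-Cartier, $\mathbb{Q}$-Cartier setting, since \Cref{prop: criterionNef} is stated for Cartier divisors. I would handle this by passing to a multiple: pick $r$ with every $rD_j$ Cartier. The inequalities above are homogeneous, so $rD_j + rkA$ is nef by \Cref{prop: criterionNef}, and nefness is invariant under positive scaling. We also need strict convexity of an ample divisor, in the form of the strict inequality used above; this is the standard toric ampleness criterion in \cite{coxlittleschneck}, and it is exactly where the projectivity hypothesis is used.
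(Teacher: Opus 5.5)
Your proof is correct and follows the same route as the paper: you translate the poset $\mathcal{S}_E$ by $kA$ for an ample Cartier divisor $A$ and use finiteness of $\mathcal{S}_E$ to choose a single large $k$. The only difference is that you verify the ``large $k$ works'' step directly, via the strict support-function inequalities and clearing denominators for the $\mathbb{Q}$-Cartier divisors, whereas the paper simply cites that the ample cone is the interior of the nef cone.
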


\begin{proof}
Let $\mathcal{S}_E = \{D_1, \dots, D_t\}$. Since $X_{\Sigma}$ is simplicial, every $D_i$ is $\mathbb{Q}$-Cartier. Let $D_A$ be an ample Cartier divisor on $X_{\Sigma}$. Recall that the ample cone is the interior of the nef cone (\cite[Theorem 6.4.9]{coxlittleschneck}). In particular, for each $i$, there exists $k_i \in \mathbb{Z}_{\geq 0}$ such that the divisor $D_i+kD_A$ is ample for $k \geq k_i$. Since $\mathcal{S}_E$ is finite, we may choose a single $k \gg 0$ such that $D_i+kD_A$ is ample, and hence nef, for every $i=1,\dots,t$.

Tensoring with $\mathcal{O}(kD_A)$ shifts the Klyachko filtration as in \eqref{eq: tensorwithaFiltration}, so
\[
D_{\mathcal{E} \otimes \mathcal{O}(kD_A)}(e) = D_{\mathcal{E}}(e) + kD_A
\]
for each $e \in E$. Hence all the divisors in the poset of $\mathcal{E} \otimes \mathcal{O}(kD_A)$ are nef.
\end{proof}

Altmann, Hochenegger and Witt \cite[Corollary 6.10]{altmann2024toricsheavespolyhedra} have recently provided a locally free resolution of $\mathcal{E}$ using the poset $\mathcal{S}_E$. In order to describe a resolution of $\mathcal{E}$, consider the set of strictly increasing chains of length $k \in \mathbb{Z}_{\geq 0}$ between two fixed divisors $D,D' \in \mathcal{S}_E$, i.e.
\begin{equation}
\label{eq:chains}
\mathcal{C}_k(D,D') \coloneqq \{ D = D_0 < D_1 < \dots < D_k = D' : \:  D_0,\dots,D_k \in \mathcal{S}_E\}.
\end{equation}

\begin{theorem}\cite[Corollary 6.10]{altmann2024toricsheavespolyhedra}
\label{thm: resolutions}
Let $X_{\Sigma}$ be a complete toric variety and let $\mathcal{E}$ be a toric vector bundle of rank $s$. There exists an exact complex, whose terms are
\begin{equation}
\label{eq: resolution}
0 \xrightarrow{} \bigoplus_{D,D' \in \mathcal{S}_E}\bigoplus_{\mathcal{C}_s(D,D')} \big(V_D \otimes \mathcal{O}(D')\big) \xrightarrow{} \dots \xrightarrow[]{} \bigoplus_{D,D' \in \mathcal{S}_E}\bigoplus_{\mathcal{C}_0(D,D')} \big(V_D \otimes \mathcal{O}(D')\big) \xrightarrow[]{} \mathcal{E} \xrightarrow[]{} 0.
\end{equation}
\end{theorem}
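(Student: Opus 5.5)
Since \Cref{thm: resolutions} is quoted from \cite[Corollary 6.10]{altmann2024toricsheavespolyhedra}, the following is only a sketch of how one could re-derive it; I have not checked the indexing conventions in detail. The approach is to see $\mathcal{E}$ as a colimit over the finite poset $\mathcal{S}_E$ of the sheaves $V_D \otimes \mathcal{O}(D)$, and to take the associated simplicial (order-complex) resolution.

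\emph{Step 1: the augmentation map.} Take $e \in V_D$. Then $D_{\mathcal{E}}(e) \ge D$, so $e \in E_\rho^{a_\rho}$ for every $\rho$, where $D = \sum_{\rho} a_\rho D_\rho$. By \eqref{eq: toricLineBundle} and Klyachko's equivalence, this gives an equivariant morphism $\mathcal{O}(D) \to \mathcal{E}$. Summing these morphisms over $D$ gives the augmentation $\bigoplus_D V_D \otimes \mathcal{O}(D) \to \mathcal{E}$.

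\emph{Step 2: the higher terms.} The term in homological degree $k$ has one summand for each chain $D_0 < \dots < D_k$ in $\mathcal{C}_k$. Each summand pairs the space $V$ at one end of the chain with the line bundle $\mathcal{O}(\cdot)$ at the other end. The order is chosen so that both structure maps go the same way:
\begin{itemize}
\item for subspaces, $D \le D'$ implies $V_{D'} \subseteq V_D$;
\item for line bundles, $D \le D'$ implies $\mathcal{O}(D) \hookrightarrow \mathcal{O}(D')$.
\end{itemize}
The differential is the alternating sum of the face maps deleting one $D_j$. Deleting an endpoint of the chain uses the inclusion of the $V$'s or of the $\mathcal{O}$'s. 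The relation $d^2=0$ then holds by the usual simplicial identities.

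\emph{Step 3: length of the complex.} The complex stops in degree $s$. The reason is that $D \mapsto V_D$ is injective on $\mathcal{S}_E$, because $D_{\mathcal{E}}(e)=D$ for a generic $e\in V_D$. So a strictly increasing chain in $\mathcal{S}_E$ gives a strictly decreasing chain of nonzero subspaces of $E$, which has length at most $s$.

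\emph{Step 4: exactness.} I would check exactness equivariantly and locally, in three reductions.
\begin{itemize}
\item Restrict to an affine chart $U_\sigma$ with $\sigma\in\Sigma(n)$.
\item Take the $M$-graded piece of degree $b\in M$ of the sections. On $U_\sigma$, the degree-$b$ piece of $\mathcal{O}(D)$ is $\mathbb{C}$ exactly when $\langle v_\rho,b\rangle \ge -a_\rho$ for all $\rho\in\sigma(1)$, and $0$ otherwise. The degree-$b$ piece of $\mathcal{E}$ is $\bigcap_{\rho\in\sigma(1)} E_\rho^{-\langle v_\rho,b\rangle}$.
\item The degree-$b$ piece of the complex is therefore the chain complex computing the homology of an order complex with coefficients. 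The indexing poset is $\mathcal{P}_b=\{D : \mathcal{O}(D)_b\ne 0\}$, an up-closed subset of $\mathcal{S}_E$, and the coefficient system is $D\mapsto V_D$.
\end{itemize}
The claim is that this complex is a resolution of $\bigcup_{D\in\mathcal{P}_b} V_D$, and that this union equals the degree-$b$ piece of $\mathcal{E}$. The latter uses compatibility \eqref{eq: compatibility}: on $U_\sigma$ the filtrations split along a common basis $L_{[u]}$. Hence the subspaces $V_D$ meeting the conditions for $\rho\in\sigma(1)$ are spanned by basis vectors and form a distributive family. The smallest relevant element of $\mathcal{S}_E$ over $\sigma$, namely the meet of the $D_{\mathcal{E}}(e)$ restricted to $\sigma(1)$, then acts as a cone point, and a standard cone (contracting homotopy) argument shows all higher homology vanishes.

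The main obstacle is this last step. Choosing the right orientation so that the complex really is the order complex of a contractible subposet, with coefficients whose colimit is exactly the degree-$b$ piece of $\mathcal{E}$, requires care. That is where local splitting and the finiteness of $\mathcal{S}_E$ enter, and in a full proof one would compare directly with the argument of \cite{altmann2024toricsheavespolyhedra}.
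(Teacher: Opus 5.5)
Your architecture is the right one: a two-sided bar complex over $\mathcal{S}_E$, exactness checked on $M$-graded pieces over the charts $U_\sigma$, and the degree-$b$ piece reduced to the order complex of the up-closed set $\mathcal{P}_b$ with coefficients $D\mapsto V_D$. However, the justification of Step 4, the only nontrivial step, is wrong as written.

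The spaces $V_D$ are cut out by the filtrations at \emph{all} rays of $\Sigma$, so they are not adapted to the splitting $E=\bigoplus L_{[u]}$ over $U_\sigma$. In \Cref{ex: TangentExample}, on the chart spanned by $(1,0)$ and $(0,1)$ the splitting basis is $\{e_1,e_2\}$. Yet $\mathcal{P}_0$ is all of $\mathcal{S}_E$ and contains the boundary divisor of the ray $(-1,-1)$, whose $V_D$ is $\langle e_1+e_2\rangle$. So the relevant $V_D$ are not spanned by basis vectors, and your distributivity argument is not available. Likewise, a coefficientwise meet ``restricted to $\sigma(1)$'' is in general not an element of $\mathcal{S}_E$, so it is not a vertex of the order complex. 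The cone point must also depend on $b$, not only on $\sigma$. What acyclicity really needs is that $\mathcal{P}_b$ has a least element $D_b\in\mathcal{S}_E$ with $V_{D_b}=\mathcal{E}(U_\sigma)_b$, and your sketch does not prove this.

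The gap closes without using the splitting beyond $\mathcal{E}(U_\sigma)_b=W_b:=\bigcap_{\rho\in\sigma(1)}E_\rho^{-\langle v_\rho,b\rangle}$. If $D\in\mathcal{P}_b$ then $V_D\subseteq W_b$. Each $0\neq w\in W_b$ lies in $V_{D_{\mathcal{E}}(w)}$ with $D_{\mathcal{E}}(w)\in\mathcal{P}_b$. Hence $\mathcal{P}_b=\varnothing$ iff $W_b=0$, and the augmentation is onto. If $W_b\neq 0$, pick $e\in W_b$ outside the finitely many proper subspaces $W_b\cap E_\rho^i\subsetneq W_b$ and set $D_b=D_{\mathcal{E}}(e)$. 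Then:
\begin{enumerate}
\item $D_{\mathcal{E}}(w)\geq D_b$ for every $w\in W_b$, so $W_b\subseteq V_{D_b}$;
\item since $D_b\in\mathcal{P}_b$, also $V_{D_b}\subseteq W_b$;
\item every $D=D_{\mathcal{E}}(e_0)\in\mathcal{P}_b$ has $e_0\in W_b$, so $D\geq D_b$.
\end{enumerate}
Prepending $D_b$ to chains that do not start at $D_b$, and sending $W_b=V_{D_b}$ to the chain $(D_b)$, is then a contracting homotopy of the augmented degree-$b$ complex.

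Finally, make the indexing explicit. The orientation forced on you in Step 2 is $V_{D_k}\otimes\mathcal{O}(D_0)$ for a chain $D_0<\dots<D_k$. This is the correct one, but it is the reverse of the literal $V_D\otimes\mathcal{O}(D')$ in the statement. The literal version cannot be exact. For $\mathcal{E}=\mathcal{O}\oplus\mathcal{O}(D')$ with $0<D'$ one has $\mathcal{S}_E=\{0,D'\}$ with $\dim V_0=2$ and $\dim V_{D'}=1$. The printed terms in degrees $0$ and $1$ then have ranks $3$ and $2$, which is incompatible with $\operatorname{rk}\mathcal{E}=2$. By your Step 3, chains have length at most $s-1$, so the $\mathcal{C}_s$ term is in fact zero.
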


\begin{remark}
\label[remark]{rk: itIsIndeedSumOfLineBundles}
    The toric vector bundle $V_{D} \otimes \mathcal{O}(D)$ for $D' = \sum_{\rho \in \Sigma(1)}a_{\rho}D_{\rho}$ is defined by the Klyachko filtration
    $$ E_{\rho}^i = \begin{cases}
        V_D & i \leq a_{\rho} \\
        0 & i > a_{\rho}
    \end{cases}.$$
    In particular, it splits as $r$ copies of $\mathcal{O}(D')$ for $r = \dim(V_{D})$.
\end{remark}

\subsection{Hirzebruch-Riemann-Roch theorem}

In order to prove the generic root count provided in \Cref{thmc}, we will have to reduce the root count to the computation of the degree of the top Chern class of a toric vector bundle. In this section, we recall some aspects of intersection theory that will be needed to prove that theorem.  We refer to \cite[Chapter 13]{coxlittleschneck} and \cite{FultonSturmfels1997} for the results. We denote by $A^{\bullet}(X_{\Sigma})$ the Chow cohomology ring of $X_{\Sigma}$, whose combinatorial description can be found in \cite[Section 5]{FultonSturmfels1997}.

\begin{definition}
    Let $X_{\Sigma}$ be a complete toric variety and let $H^i(X_{\Sigma},\mathbb{Z})$ be its $i$-th singular cohomology group. Let $\mathcal{E}$ be a vector bundle of rank $s$ over $X_{\Sigma}$. The \textit{Chern classes} $c_i(\mathcal{E}) \in A^{i}(X_\Sigma)$ are invariants satisfying the following properties:
    \begin{enumerate}[label = (\roman*)]
        \item[-] $c_0(\mathcal{E}) = 1$ for all vector bundles $\mathcal{E}$ and $c_1(\mathcal{O}) = 0$.
        \item[-] $c_i(\mathcal{E}^{\vee}) = (-1)^i c_i(\mathcal{E})$, where $\mathcal{E}^{\vee}$ is the dual vector bundle.
        \item[-] $c_1(\wedge^s\mathcal{E}) = c_1(\mathcal{E})$.
    \end{enumerate}
    The \textit{total Chern class} $c(\mathcal{E}) \in A^{\bullet}(X)$ equals the polynomial $$c(\mathcal{E}) \coloneqq 1 + c_1(\mathcal{E}) + \dots + c_n(\mathcal{E})$$
    and satisfies the formula
    \begin{equation}
    \label{eq: whitney}
    c(\mathcal{E}) = c(\mathcal{F})c(\mathcal{G})
    \end{equation} for every short exact sequence $0 \xrightarrow[]{} \mathcal{F} \xrightarrow[]{} \mathcal{E} \xrightarrow[]{} \mathcal{G} \xrightarrow[]{} 0$. The \textit{Chern roots} of $\mathcal{E}$ are symbols $x_1,\dots,x_s$ satisfying that
    $$c(\mathcal{E}) = \prod_{i = 1}^s(1 + x_i).$$
    In practical terms, this means that
    $$c_i(\mathcal{E}) = \sigma_i(x_1,\dots,x_s)$$
    where $\sigma_i$ is the $i$-th symmetric polynomial. Each Chern class $c_i(\mathcal{E}) \in A^i(X_\Sigma)$ acts on Chow cohomology by multiplication:
\[
c_i(\mathcal{E}) \cdot (-) : A^j(X_\Sigma) \to A^{j+i}(X_\Sigma).
\]
In particular, for a complete toric variety $X_\Sigma$ of dimension $n$, the top Chern class $c_n(\mathcal{E})$ satisfies
\[
\int_{X_\Sigma} c_n(\mathcal{E}) := \deg(c_n(\mathcal{E}) \cap [X_\Sigma]) \in \mathbb{Z},
\]
where $[X_\Sigma] \in A^0(X_\Sigma)$ is the fundamental class and $A^n(X_\Sigma) \cong \mathbb{Z}$.
\end{definition}

In the case where $\mathcal{E} \cong \oplus_{i = 1}^n\mathcal{O}(D_i)$ for some nef line bundles $D_1,\dots,D_n$, the $n$-th Chern class has a polyhedral interpretation as it coincides with the mixed volume of the corresponding polytopes.

\begin{theorem}
\label[theorem]{thm: MixedVolumeChernClass}
Let $X_{\Sigma}$ be a projective simplicial toric variety and let $D_1,\dots,D_n$ be nef Cartier $\Tn$-invariant divisors on $X_{\Sigma}$, with associated polytopes $\Delta_1,\dots,\Delta_n$.
If $\mathcal{E} = \oplus_{i = 1}^n \mathcal{O}(D_i)$, then
$$\int_{X_{\Sigma}} c_n(\mathcal{E}) = \MV(\Delta_1,\dots,\Delta_n).$$
\end{theorem}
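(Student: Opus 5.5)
The plan is to reduce the statement to the classical intersection-theoretic interpretation of mixed volume, already recalled in \eqref{eq: mixedVolume}. By the Whitney formula \eqref{eq: whitney}, applied inductively to the split short exact sequences
\[
0 \to \bigoplus_{i<k}\mathcal{O}(D_i) \to \bigoplus_{i\le k}\mathcal{O}(D_i) \to \mathcal{O}(D_k) \to 0,
\]
we get $c(\mathcal{E}) = \prod_{i=1}^n (1 + c_1(\mathcal{O}(D_i)))$. The Chern roots are therefore $x_i = c_1(\mathcal{O}(D_i)) = [D_i] \in A^1(X_\Sigma)$, and the top Chern class is
\[
c_n(\mathcal{E}) = \sigma_n(x_1,\dots,x_n) = [D_1]\cdots[D_n].
\]
Integrating gives $\int_{X_\Sigma} c_n(\mathcal{E}) = (D_1\cdot\ldots\cdot D_n)$, the intersection number of the $D_i$. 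Since $X_\Sigma$ is simplicial, $A^\bullet(X_\Sigma)\otimes\mathbb{Q}$ has the Fulton--Sturmfels description, and this product is well defined (over $\mathbb{Q}$, though here it is an integer because the $D_i$ are Cartier).

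The remaining step is the identity $(D_1\cdots D_n) = \MV(\Delta_1,\dots,\Delta_n)$ for nef Cartier divisors, as cited from \cite[Section 5.4]{toricfulton}. I would prove it by polarization.

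First, for a single nef Cartier divisor $D$ with polytope $\Delta_D$, I would show $(D^n) = n!\,\mathrm{vol}(\Delta_D)$. By \Cref{prop: criterionNef}, $\mathcal{O}(D)$ is globally generated. For a nef divisor one has $h^0(kD) = |k\Delta_D\cap M|$ and the higher cohomology vanishes (Demazure vanishing). Comparing the leading terms of asymptotic Riemann--Roch and the Ehrhart polynomial in $k$ then gives $(D^n)/n! = \mathrm{vol}(\Delta_D)$.

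Next, nef Cartier divisors satisfy $\Delta_{D+D'} = \Delta_D + \Delta_{D'}$. This is the step I expect to be the main obstacle. The inclusion $\supseteq$ is clear. For the reverse inclusion I would use the support-function description: nef means the support function $\varphi_D$ is convex, and since $\varphi_{D+D'} = \varphi_D + \varphi_{D'}$, the vertices $m_\sigma + m'_\sigma$ of $\Delta_{D+D'}$ given by \Cref{prop: criterionNef}(iii) decompose as sums of vertices of $\Delta_D$ and $\Delta_{D'}$.

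With additivity in hand, both sides are symmetric multilinear functions of $(D_1,\dots,D_n)$ on the nef cone. They agree on the diagonal, because the normalized mixed volume satisfies $\MV(\Delta,\dots,\Delta) = n!\,\mathrm{vol}(\Delta)$. Applying the polarization formula to $\big(\sum_{i\in I} D_i\big)^n$ over subsets $I\subseteq[n]$ then yields equality of the mixed terms.
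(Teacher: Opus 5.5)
Your proposal is correct and follows essentially the route the paper relies on. The paper gives no separate proof: it treats the statement as a consequence of the Whitney formula together with the classical identity $(D_1\cdots D_n)=\MV(\Delta_1,\dots,\Delta_n)$ cited from Fulton in \eqref{eq: mixedVolume}, and your argument for that identity (asymptotic Riemann--Roch with Demazure vanishing and Ehrhart on the diagonal, Minkowski additivity of $\Delta_D$ on nef Cartier divisors, then polarization) is the standard proof behind that reference.
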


In order to relate the degree of the ideal generated by global sections of $\mathcal{E}$ and their Chern classes, we will use the \textit{Hirzebruch-Riemann-Roch} theorem, which relates the following three objects.

\begin{enumerate}[label = (\roman*)]
    \item The \textit{Chern character} $\ch(\mathcal{E}) \in A^{\bullet}(X_{\Sigma})$ of $\mathcal{E}$ is the power series which, in terms of the Chern roots of $\mathcal{E}$, equals
    $$\ch(\mathcal{E}) = \sum_{i = 1}^se^{x_i} \in A^{\bullet}(X_\Sigma).$$
    \item If $X_{\Sigma}$ is a smooth variety, the \textit{Todd class} $\Td(X_{\Sigma}) \in A^{\bullet}(X_{\Sigma})$ can be defined using the Chern roots $\xi_1,\dots,\xi_n$ of its tangent bundle $\mathcal{T}$ as
    $$\Td(X) \coloneqq \prod_{i = 1}^n \frac{\xi_i}{1 - e^{-\xi_i}}.$$
    If $X_{\Sigma}$ is not smooth, then $\Td(X_{\Sigma})$ can be defined by pushing forward the Todd class of a desingularization (see \cite{Fulton1984}).
    \item The Euler characteristic of a vector bundle $\mathcal{E}$ over $X_{\Sigma}$ equals
    $$\chi(\mathcal{E}) = \sum_{i \geq 0}(-1)^i\dim H^i(X_{\Sigma},\mathcal{E}).$$
\end{enumerate}

\begin{theorem}
\label[theorem]{thm: HRR}
Let $X_{\Sigma}$ be a complete toric variety and let $\mathcal{E}$ be a vector bundle. Then
\begin{equation}
\label{eq: eulerclass}
\chi(\mathcal{E}) = \int_{X_{\Sigma}} \ch(\mathcal{E})\cdot\Td(X).
\end{equation}
\end{theorem}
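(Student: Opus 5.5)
The statement to prove is the Hirzebruch--Riemann--Roch formula for a complete toric variety, $\chi(\mathcal{E}) = \int_{X_{\Sigma}} \ch(\mathcal{E})\cdot\Td(X)$. My plan is not to reprove HRR from scratch. Instead I would reduce it, using the tools already in this section, to two standard inputs: HRR for line bundles on toric varieties, and the singular Riemann--Roch theorem of Baum--Fulton--MacPherson as presented in \cite{Fulton1984}.

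The first step is the reduction to line bundles. By \Cref{thm: resolutions}, $\mathcal{E}$ has a finite resolution whose terms are finite direct sums of bundles $V_D \otimes \mathcal{O}(D')$. By \Cref{rk: itIsIndeedSumOfLineBundles}, each such term is a direct sum of $\dim V_D$ copies of the rank-one sheaf $\mathcal{O}(D')$. Both sides of \eqref{eq: eulerclass} are additive on short exact sequences:
\begin{itemize}
\item the left side, $\chi$, by the long exact sequence in cohomology;
\item the right side because $\ch$ is additive, which is a consequence of the Whitney formula \eqref{eq: whitney} and the splitting principle.
\end{itemize}
Splitting the exact complex into short exact sequences and taking alternating sums therefore reduces the claim to the case $\mathcal{E}=\mathcal{O}(D')$ for a $\Tn$-invariant divisor $D'$.

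One caveat appears here. When $X_\Sigma$ is not smooth, $\mathcal{O}(D')$ need not be locally free, so the alternating sum must be read in the Grothendieck group of coherent sheaves. The right-hand side must then be interpreted through the Riemann--Roch transformation $\tau: K_0(X_\Sigma)\to A_\bullet(X_\Sigma)_{\mathbb{Q}}$. This $\tau$ satisfies $\tau(\mathcal{E}) = \ch(\mathcal{E})\cap \Td(X_\Sigma)$ for locally free $\mathcal{E}$, where $\Td(X_\Sigma)=\tau(\mathcal{O}_{X_\Sigma})$. This identity is exactly how the Todd class of a singular variety is defined in item (ii) above.

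The second step uses the Baum--Fulton--MacPherson theorem. For complete $X$, this theorem gives $\chi(\mathcal{F}) = \deg \tau(\mathcal{F})$ for every coherent sheaf $\mathcal{F}$. Since $\tau$ is additive on $K_0$, this single statement already contains the full result, and the reduction to line bundles becomes only a consistency check. I would therefore present the proof as follows:
\begin{itemize}
\item invoke $\chi(\mathcal{E})=\deg\tau(\mathcal{E})$;
\item apply the module property $\tau(\mathcal{E}\otimes\mathcal{F})=\ch(\mathcal{E})\cap\tau(\mathcal{F})$ for locally free $\mathcal{E}$, taking $\mathcal{F}=\mathcal{O}_{X_\Sigma}$.
\end{itemize}
This yields $\chi(\mathcal{E})=\int \ch(\mathcal{E})\cdot\Td(X)$.

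In the smooth projective case, which is the one that matters later since vertical pairs have a smooth maximal cone and are projective, the same conclusion is simply classical Hirzebruch--Riemann--Roch. As a sanity check in that case, one can use toric Chern roots. The roots of $\mathcal{T}$ are given by the exact sequence $0\to \mathcal{O}^{\,n-|\Sigma(1)|}\to\bigoplus_\rho\mathcal{O}(D_\rho)\to\mathcal{T}\to 0$, which gives $\Td(X)=\prod_\rho \frac{[D_\rho]}{1-e^{-[D_\rho]}}$.

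The main obstacle is the singular case. There one has to be careful that the cap product with $\Td(X_\Sigma)$ is well defined in rational Chow groups for a complete, non-projective toric variety. One also has to check that the operational Chow ring $A^\bullet(X_\Sigma)$ used in the statement agrees with the target of $\tau$ after capping with $[X_\Sigma]$. For complete toric varieties, $A^\bullet(X_\Sigma)\otimes\mathbb{Q}\to A_{n-\bullet}(X_\Sigma)_\mathbb{Q}$ is an isomorphism when $X_\Sigma$ is simplicial, which covers all later uses. In general, however, I would only claim the formula as an equality of degrees of cap products, citing \cite{Fulton1984} and \cite{FultonSturmfels1997}.
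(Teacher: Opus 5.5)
Your proposal is correct and matches the paper, which gives no argument of its own but quotes this as the standard (singular) Hirzebruch--Riemann--Roch theorem from \cite{coxlittleschneck}, \cite{FultonSturmfels1997} and \cite{Fulton1984}; that is precisely the Baum--Fulton--MacPherson identity $\chi(\mathcal{E})=\deg\bigl(\ch(\mathcal{E})\cap\tau(\mathcal{O}_{X_\Sigma})\bigr)$ you invoke, so the reduction via \Cref{thm: resolutions} can indeed be dropped. The one link you should justify rather than assert is that the paper's $\Td(X_\Sigma)$, defined as $f_*\Td(X')$ for a resolution $f\colon X'\to X_\Sigma$, equals $\tau(\mathcal{O}_{X_\Sigma})$: this holds because toric singularities are rational ($Rf_*\mathcal{O}_{X'}=\mathcal{O}_{X_\Sigma}$) and $\tau$ commutes with proper pushforward. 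Also, the trivial summand in your Euler sequence has rank $|\Sigma(1)|-n$, and the later applications concern simplicial, not necessarily smooth, $X_\Sigma$.
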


\section{Vertical systems and toric vector bundles}
\label{section: vertical}

In this section, we describe vertically parametrized polynomial systems and we link them to toric vector bundles. These are families of parametric polynomial systems whose appearance in several applications has attracted interest in recent years. Some of their geometric properties, including the generic root counts over $\Tn$ and the dimension for generic values of the parameters, have been studied in \cite{genericrootcounts, FELIU2025630, tropicalrootbounds, kaveh2025vectorvaluedlaurentpolynomialequations}. To each of these systems, we show how to attach a toric variety $X_{\Sigma}$ and a toric vector bundle $\mathcal{E}_F$, similarly to what has been done in \cite{kaveh2025vectorvaluedlaurentpolynomialequations}.

\subsection{Vertically parametrized systems and vertical pairs}
For the sake of simplicity in the exposition, we work over the lattice $M = \mathbb{Z}^n$ and $M_{\R} \cong \mathbb{R}^n$ and the vector space $E \cong \mathbb{C}^s$. Consider the following objects:
\begin{itemize}
\item[-] A matrix $C\in \C^{s\times m}$ of rank $s$ with columns $\gamma_1,\dots,\gamma_m \in \C^s$ with $s \geq 2$,
\item[-] A matrix $B \in \mathbb{Z}^{n \times m}$ with columns $b_1,\dots,b_m \in \mathbb{Z}^n$.
\end{itemize}
A \emph{vertically parametrized system} (or \emph{vertical system} for short) is a  parametric polynomial system of the form
\begin{equation}
\label{eq:NkB}
F=C(u \star x^B)\in \C[u_1,\ldots,u_m,x_1^\pm,\ldots,x_n^\pm]^s,
\end{equation}
consisting of $s$ linearly independent polynomials with parameters $u=(u_1,\ldots,u_m)$ and variables
$x=(x_1,\ldots,x_n)$,
The component\-wise product $u \star x^B$ indicates that the monomial $x^{b_i}$ corresponding to the $i$-th column of $B$ is scaled by $u_i$, while the rows of $C$ give  linear combinations of the scaled monomials. An important feature is that $F$ is linear in the parameters and that each parameter always accompanies the same monomial. For a vertical system $F$, we denote the specialization at $u \in \C^m$ by
\[F_{u}=F(u,\cdot) \in\C[x^\pm]^{s}\, . \]
Therefore, we consider the set of zeros over $\Tn$ as:
\begin{equation*}
V_{\Tn}(F_{u})=\{x \in \Tn : F_{u}(x)=0 \}\,.
\end{equation*}
Vertical systems have recently appeared in the literature as vector-valued polynomial equations \cite{kaveh2025vectorvaluedlaurentpolynomialequations}. Concretely, for each $u \in \C^m$ one can consider $F_{u}$ as the tensor in $\mathbb{C}^s \otimes \mathbb{C}[x^{\pm}]$ given by
\begin{equation}
\label{eq: Futensor}
F_{u} = \sum_{i = 1}^m u_i (\gamma_i \otimes x^{b_i}) \subset \mathbb{C}^s \otimes \mathbb{C}[x^{\pm}].
\end{equation}
For each $\bar{x} \in \Tn$, the value $F_u(\bar{x})$ is a vector in $\mathbb{C}^s$. The set $V_{\Tn}(F_u)$ consists of the points where $F_u(\bar{x}) = \mathbf{0} \in \mathbb{C}^s$.

\begin{remark}
\label[remark]{remark: reparametrizing}
As illustrated in \Cref{fig: exampleIntro}, the information of a vertical system can be encoded as a set of pairs given by lattice points in $\mathbb{Z}^n$ and the vector spaces spanned by the columns of $C$ that correspond to that lattice point. By the multilinearity of the tensors in \eqref{eq: Futensor}, any basis of that vector space can represent the same vertical system, up to a linear transformation in the parameters. In particular, if $\gamma \in \mathbb{C}^s$ belongs to the vector space spanned by the columns of $C$ that correspond to a fixed lattice point, we can always assume that $\gamma$ is one of the columns of $C$.

For instance, the vertical system in \Cref{first_system} can also be written as
$$\begin{cases}
\overline{u}_1 +  \hspace{0.9cm} \overline{u}_3 x + \overline{u}_4 y + \overline{u}_5 x y \hspace{1.2cm}= 0,\\
 \hspace{0.9cm}   \overline{u}_2 + \overline{u}_3 x  +  \hspace{1.1cm} \overline{u}_5 x y + \overline{u}_6 y = 0.
\end{cases}$$
where the new parameters are $(\overline{u}_1,\overline{u}_2,\overline{u}_3,\overline{u}_4,\overline{u}_5,\overline{u}_6) = (u_1,u_2,u_3,u_4 + u_6,u_5,2u_4 + u_6)$.
\end{remark}

Consider $\Delta_F \subset M_{\R}$ to be the polytope:
\begin{equation}
\label{eq: polytopeMain}
\Delta_F \coloneqq \conv \Big\{\sum_{i \in \mathcal{I}}b_i \in \mathbb{Z}^n : \:  \mathcal{I} \subset [m], \: |\mathcal{I}| = s, \: \bigwedge_{i \in \mathcal{I}}\gamma_{i} \neq 0\Big\} \subset \R^n
\end{equation}
and let $\Sigma_F$ be the normal fan of $\Delta_F$.

\begin{remark}
\label[remark]{rk: MinkowskiSum}
 The polytope $\Delta_F$ can be computed as the projection, via the map
$$p: \mathbb{R}^m \to \mathbb{R}^n, \quad v \mapsto Bv,$$
of the basis polytope of the representable matroid of $C$ (see \cite{Feichtner2005} for the definition of the basis polytope). Note that if $F$ is a sparse system, $\Delta_F$ equals the Minkowski sum of the Newton polytopes of the individual equations. Hence, the use of $X_{\Sigma}$ generalizes the toric varieties employed in \cite{bender2021toric, dandrea2025sparse, BUSE2024107739}.
\end{remark}

Similarly to \cite[Definition 3.1]{kaveh2025vectorvaluedlaurentpolynomialequations}, we attach to each vertical system $F = C(u \star x^B)$ a pair given by a projective toric variety $X_{\Sigma}$ and toric vector bundle $\mathcal{E}_F$.

\begin{definition}
\label[definition]{def: verticalVectorBundle}
Let $F = C(u \star x^B)$ be a vertical system.  We say that $(X_{\Sigma},\mathcal{E}_F)$ is a \emph{vertical pair} associated to $F$ if $X_{\Sigma}$ is a toric variety given by a fan $\Sigma$ and $\mathcal{E}_F$ is a toric vector bundle over $X_{\Sigma}$ satisfying:
\begin{enumerate}[label = (\roman*)]
    \item $X_{\Sigma}$ is projective, $\Sigma$ refines $\Sigma_F$ (i.e. every cone in $\Sigma$ is contained in a cone of $\Sigma_F$) and $\Sigma$ contains a smooth maximal cone $\sigma \in \Sigma(n)$.

    \item The Klyachko filtration associated to $\mathcal{E}_F$ equals
        \begin{equation}
\label{eq: klyachko_data_vps}
    E^{i}_{\rho} \coloneqq \spann \langle \gamma_j : \:   \langle v_{\rho}, b_j \rangle \leq -i \rangle \subset \mathbb{C}^s, \: \rho \in \Sigma(1) \text{ and } i \in \mathbb{Z},
\end{equation}
where $v_{\rho} \in \mathbb{Z}^n$ is the primitive generator of the ray $\rho \in \Sigma(1)$.
\end{enumerate}
We say that $(X_{\Sigma},\mathcal{E}_F)$ is a \emph{simplicial/smooth vertical pair} if $X_{\Sigma}$ is simplicial/smooth.
\end{definition}
It is worth noting that smooth (and thus simplicial) vertical pairs always exist. Indeed, by the toric resolution of singularities \cite[Theorem 11.1.9]{coxlittleschneck}, there exists a smooth refinement $\Sigma$ of $\Sigma_F$ such that the induced toric morphism
\[
X_{\Sigma}\longrightarrow X_{\Sigma_F}
\]
is projective. Since $\Sigma_F$ is the normal fan of the polytope $\Delta_F$, the toric variety $X_{\Sigma_F}$ is projective, and hence so is $X_{\Sigma}$. The following technical lemma shows that the Klyachko filtration from \eqref{eq: klyachko_data_vps} satisfies the compatibility conditions from \Cref{def: KlyachkoFiltration}.

\begin{lemma}
\label[lemma]{lemm: MinkowskiReason}
Let $F = C(u \star x^B)$ be a vertical system and let $(X_{\Sigma},\mathcal{E}_F)$ be a vertical pair associated to $F$ with Klyachko filtration $\EFilt$. For each cone $\sigma \in \Sigma(n)$, let $\mathcal{I} \subset [m]$ be such that $\sum_{i \in \mathcal{I}}b_i$ equals the vertex of $\Delta_F$ associated to $\sigma$.

\begin{enumerate}[label = (\roman*)]
    \item For each $i \in \mathcal{I}$ and $\rho \in \sigma(1)$, we have
\begin{equation}
\label{eq: vertex}
    \langle v_{\rho},b_i \rangle + \max\{i \in \mathbb{Z} : \: \gamma_i \in E_{\rho}^{i}\} = 0.
\end{equation}

\item For all $\rho \in \sigma(1)$ and $i \in \mathbb{Z}$, $E_{\rho}^{i}$ is spanned by a subset of $\{\gamma_i : \: i \in \mathcal{I}\}$. In particular, $\EFilt$ satisfies the compatibility conditions in \eqref{eq: compatibility}.

\end{enumerate}

\end{lemma}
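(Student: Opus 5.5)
The plan is to reduce both statements to the standard exchange property of minimum-weight bases of the matroid represented by $C$. Fix $\sigma \in \Sigma(n)$. Since $\Sigma$ refines $\Sigma_F$, the cone $\sigma$ lies in the normal cone of some vertex $w$ of $\Delta_F$.

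With the sign convention of \eqref{eq:polytope}, this means that for every $v \in \sigma$, and in particular for every $v_\rho$ with $\rho \in \sigma(1)$, the linear form $\langle v,\cdot\rangle$ attains its minimum over $\Delta_F$ at $w = \sum_{i\in\mathcal{I}} b_i$. For a fixed $\rho \in \sigma(1)$, put $c_j \coloneqq \langle v_\rho, b_j\rangle$ for $j\in[m]$. The set $\mathcal{I}$ is a basis of the matroid of $C$, because $\bigwedge_{i\in\mathcal{I}}\gamma_i \neq 0$. Every other basis $\mathcal{J}$ gives a point $\sum_{j\in\mathcal{J}} b_j \in \Delta_F$. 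Hence $\mathcal{I}$ minimizes the weight $\sum_{j\in\mathcal{J}} c_j$ over all bases $\mathcal{J}$ (compare \Cref{rk: MinkowskiSum}).

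The key step, which I expect to be the only real content, is the following claim. For every threshold $t\in\mathbb{Z}$,
\[
\spann\langle \gamma_j : c_j \le t\rangle = \spann\langle \gamma_i : i\in\mathcal{I},\ c_i\le t\rangle .
\]
The inclusion $\supseteq$ is trivial. For $\subseteq$, suppose some $j$ with $c_j\le t$ has $\gamma_j$ outside the right-hand side. Write $\gamma_j=\sum_{i\in\mathcal{I}}\lambda_i\gamma_i$ in the basis $\mathcal{I}$. Then some $i_0\in\mathcal{I}$ with $c_{i_0}>t$ has $\lambda_{i_0}\neq 0$. Basis exchange shows that $(\mathcal{I}\setminus\{i_0\})\cup\{j\}$ is again a basis. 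Its weight is smaller by $c_{i_0}-c_j>0$, which contradicts minimality. Taking $t=-k$ gives $E^k_\rho=\spann\langle\gamma_i : i\in\mathcal{I},\ c_i\le -k\rangle$, which is statement (ii).

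For (i), take $i\in\mathcal{I}$. By (ii), $\gamma_i\in E^k_\rho$ holds if and only if $\gamma_i$ lies in the span of $\{\gamma_l : l\in\mathcal{I},\ c_l\le -k\}$. Since the $\gamma_l$ with $l\in\mathcal{I}$ are linearly independent, this happens exactly when $c_i\le -k$. Hence $\max\{k : \gamma_i\in E^k_\rho\} = -c_i = -\langle v_\rho,b_i\rangle$, which is \eqref{eq: vertex}.

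For the compatibility condition \eqref{eq: compatibility}, I would take the decomposition $E=\bigoplus_{i\in\mathcal{I}} L_{[u_i]}$ with $L_{[u_i]}=\mathbb{C}\gamma_i$ and $u_i=-b_i$. Then $\langle v_\rho,u_i\rangle\ge k$ is equivalent to $c_i\le -k$, so the displayed formula for $E^k_\rho$ is exactly \eqref{eq: compatibility}. The decomposition is the same for all $\rho\in\sigma(1)$, since the same $\mathcal{I}$ works for every ray of $\sigma$. Faces of $\sigma$ inherit the decomposition by passing to $M_\tau$.

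The remaining conditions of \Cref{def: KlyachkoFiltration} are immediate from \eqref{eq: klyachko_data_vps}. The filtration is decreasing in $k$. It is $0$ for $k\gg0$ and equals the column span of $C$, which is all of $\mathbb{C}^s$ because $C$ has rank $s$, for $k\ll0$.
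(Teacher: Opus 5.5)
Your proof is correct. Its engine is the same single-element basis exchange the paper uses, played against the minimality of $\sum_{i\in\mathcal{I}}b_i$ in direction $v_\rho$. Your threshold claim is equivalent to the paper's assertion that $b_j\notin P_i^{\sigma}$ forces $\gamma_j\in\spann\langle\gamma_k : k\in\mathcal{I}\setminus\{i\}\rangle$: both say that the coordinate of $\gamma_j$ along $\gamma_i$, in the basis indexed by $\mathcal{I}$, vanishes whenever $\langle v_\rho,b_j\rangle<\langle v_\rho,b_i\rangle$.

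Where you differ is the organization of (ii).
\begin{itemize}
\item The paper uses that claim only to get (i). It then proves (ii) by a second exchange argument: after sorting $\langle v_\rho,b_1\rangle\le\dots\le\langle v_\rho,b_s\rangle$, it establishes the jump-number identity $\max(i:\dim E^i_\rho\ge\ell)=\max(i:\gamma_1,\dots,\gamma_\ell\in E^i_\rho)$, with an induction on $\ell$ left implicit, and deduces the spanning property by a dimension count.
\item You read (ii) off the threshold claim directly, since every $\gamma_j$ with $\langle v_\rho,b_j\rangle\le -k$ expands only in basis vectors $\gamma_i$ with $\langle v_\rho,b_i\rangle\le\langle v_\rho,b_j\rangle$. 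You then get (i) from the linear independence of $\{\gamma_i\}_{i\in\mathcal{I}}$.
\end{itemize}

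Your route is shorter and avoids that implicit induction. The paper's identity also drops out of your argument: $\dim E^k_\rho=\#\{i\in\mathcal{I}: \langle v_\rho,b_i\rangle\le -k\}$, so $\omega_{\rho,\ell}$ is the $\ell$-th largest of the numbers $-\langle v_\rho,b_i\rangle$, $i\in\mathcal{I}$.

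You also make explicit the weights $u_i=-b_i$ realizing \eqref{eq: compatibility} and the remaining axioms of \Cref{def: KlyachkoFiltration}, which the paper leaves implicit. Like the paper, you read $\mathcal{I}$ as one of the index sets in \eqref{eq: polytopeMain}, so that $\{\gamma_i\}_{i\in\mathcal{I}}$ is a basis; that is the intended meaning of the statement.
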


\begin{proof}

We fix a cone $\sigma \in \Sigma(n)$ and consider the vertex of $\Delta_F$ corresponding to $\sigma$. Up to reordering the columns, we may assume that this vertex equals $\sum_{i = 1}^sb_i$, i.e. it is the vertex corresponding to the sum of the first $s$ columns of $B$. This implies that for every other $b \in \Delta_F$, we have
\begin{equation}
\label{eq: vertexeq}
    \langle v_{\rho},b \rangle \geq \langle v_{\rho}, \sum_{i = 1}^s b_i\rangle, \quad \rho \in \sigma(1).
\end{equation}
With this, we define the family of polyhedra
\begin{equation}
\label{eq: polyhedra}
P_i^{\sigma} \coloneqq \{b \in \mathbb{Z}^n : \: \langle v_{\rho},b \rangle \geq \langle v_{\rho},b_i \rangle, \: \forall \rho \in \sigma(1) \}, \quad i \in \{1,\dots,s\}.
\end{equation}
We claim that if $b_j \notin P_i^{\sigma}$ for some $j \in [m]$, then $\gamma_j \in \spann\langle \gamma_1,\dots,\gamma_{i-1},\gamma_{i+1},\dots,\gamma_s \rangle$. Namely, if $b_j \notin P_i^{\sigma}$, then there exists $\rho \in \sigma(1)$ such that
$$\langle v_{\rho},b_j \rangle < \langle v_{\rho},b_i \rangle. $$
If $\gamma_j \notin \spann\langle \gamma_1,\dots,\gamma_{i-1},\gamma_{i+1},\dots,\gamma_s \rangle$, then
$b_j + \sum_{k \neq i}b_k \in \Delta_F$ and so $$\langle v_{\rho},b_j + \sum_{k \neq i}b_k \rangle < \langle v_{\rho}, \sum_{k = 1}^s b_k \rangle $$
contradicting \eqref{eq: vertexeq}. As a consequence,
$$\max (i \in \mathbb{Z} : \: \gamma_j \in E_{\rho}^i) = -\langle v_{\rho},b_j \rangle, $$
for each $\rho \in \sigma(1)$, implying \eqref{eq: vertex}.

To prove part (ii), we may assume that $\langle v_{\rho},b_1 \rangle \leq \dots \leq \langle v_{\rho},b_s \rangle$. For this purpose, we must show that for all $\ell = 1,\dots,s$
\begin{equation}
\label{eq: maxinequality}\max(i \in \mathbb{Z} : \: \dim E_{\rho}^i \geq \ell) = \max(i \in \mathbb{Z} : \: \gamma_1,\dots,\gamma_{\ell} \in E_{\rho}^i)
\end{equation}
Clearly, $\max(i \in \mathbb{Z} : \: \dim E_{\rho}^i \geq \ell) \geq \max(i \in \mathbb{Z} : \: \gamma_1,\dots,\gamma_{\ell} \in E_{\rho}^i)$. If the inequality were strict, then there would exist $j \in [m] \setminus [s]$ such that
$$
\max(i \in \mathbb{Z} : \: \gamma_1,\dots,\gamma_{\ell-1},\gamma_j \in E_{\rho}^i) > \max(i \in \mathbb{Z} : \: \gamma_1,\dots,\gamma_{\ell} \in E_{\rho}^i),
$$
with $\gamma_j \notin \spann\langle \gamma_1,\dots,\gamma_{\ell-1} \rangle$. In particular, $\gamma_j = \sum_{k = 1}^s\alpha_k\gamma_k$ for some $\alpha_k \in \mathbb{C}$, with $\alpha_k \neq 0$ for some $k \geq \ell$. We pick $k \in \{\ell,\dots,s\}$ such that $\alpha_k \neq 0$ and note that
$\gamma_1 \wedge \dots \wedge \gamma_{k -1} \wedge \gamma_j \wedge \gamma_{k + 1} \wedge \dots \wedge \gamma_{s} \neq 0$ and
$$\max(i \in \mathbb{Z} : \: \gamma_1,\dots,\gamma_{\ell-1},\gamma_j \in E_{\rho}^i) > \max(i \in \mathbb{Z} : \: \gamma_1,\dots,\gamma_{\ell - 1},\gamma_{k} \in E_{\rho}^i)$$
However, this would imply that
$$\langle v_{\rho},b_j \rangle + \langle v_{\rho}, \sum_{t \neq k}b_t\rangle  < \langle v_{\rho},\sum_{t = 1}^{\ell}b_t\rangle,$$
contradicting \eqref{eq: vertexeq}. Therefore, \eqref{eq: maxinequality} implies that each of the vector spaces $E_{\rho}^i$ for $\rho \in \sigma(1)$ is generated by a subset of $\{\gamma_i : \: i = 1,\dots,s \}$, implying \eqref{eq: compatibility}.
\end{proof}

The results in the previous lemma imply that $\mathcal{E}_F$ is indeed a toric vector bundle which decomposes over $U_{\sigma}$ as:
\begin{equation}
\label{eq: trivialization}
\mathcal{E}_F{\mid _{U_{\sigma}}} \cong \bigoplus_{i \in \mathcal{I}} \mathcal{L}_{b_i}{\mid _{U_{\sigma}}},
\end{equation}
i.e. the weights in the decomposition correspond to the lattice points that give rise to the vertex corresponding to $\sigma$ in $\Delta_F$.

\begin{figure}
    \centering
\scalebox{1.1}{ \begin{tikzpicture}[x=0.75cm, y=0.75cm, line width=1.25pt]

      \foreach \x in {-1,1} {
        \draw[color=white!40!black] (\x, 2pt) -- (\x, -2pt);}
      \foreach \y in {-1,1} {
        \draw[color=white!40!black] (2pt, \y) -- (-2pt, \y);}
      \draw[color=white!40!black, <->] (-2.4, 0) -- (2.4,0) {};
      \draw[color=white!40!black, <->] (0, -1.6) -- (0, 1.8) {};

      \draw[color=blue, opacity=0.5] (1,0) -- (0,1) -- (-1,1) -- (-1,0) -- (0,-1) -- (1,-1) --(1,0) -- cycle;

       \draw[color=teal, dashed] (-1,-1) -- (-1,2) -- (2,-1) -- (-1,-1) -- cycle;

      \node () at (2.3,0.3) {\small $-e_1 - e_2$};
      \node () at (1.4,1.3)  {\small $-e_1 - e_2$};
      \node () at (-1.4,1.2) {\small $e_1$};
      \node () at (-1.4,0.3)  {\small $e_1$};
      \node () at (0.5,-1.4) {\small $e_2$};
      \node () at (1.4,-1.4)  {\small $e_2$};

     \node[circle, fill=orange, inner sep=2.0pt] () at (0,-1) {};
      \node[circle, fill=orange, inner sep=2.0pt] () at (1,-1) {};
      \node[circle, fill=orange, inner sep=2.0pt] () at (0,1) {};
      \node[circle, fill=orange, inner sep=2.0pt] () at (1,0) {};
       \node[circle, fill=orange, inner sep=2.0pt] () at (-1,1) {};
      \node[circle, fill=orange, inner sep=2.0pt] () at (-1,0) {};
    \end{tikzpicture}}
    \centering
\scalebox{1.1}{ \begin{tikzpicture}[x=0.75cm, y=0.75cm, line width=1.25pt]

      \foreach \x in {-1,1} {
        \draw[color=white!40!black] (\x, 2pt) -- (\x, -2pt);}
      \foreach \y in {-1,1} {
        \draw[color=white!40!black] (2pt, \y) -- (-2pt, \y);}
      \draw[color=white!40!black, <->] (-2.4, 0) -- (2.4,0) {};
      \draw[color=white!40!black, <->] (0, -1.6) -- (0, 1.8) {};

      \draw[color=blue, opacity=0.5] (1,0) -- (0,1) -- (-1,1) -- (-1,0) -- (0,-1) -- (1,-1) --(1,0) -- cycle;

\usetikzlibrary{patterns}
\fill[pattern=north east lines, pattern color=green, opacity=0.6] (-1,0) rectangle (2,2);
\fill[pattern=north west lines, pattern color=blue, opacity=0.6] (0,-1) rectangle (2,2);

     \node[circle, fill=orange, inner sep=2.0pt] () at (0,-1) {};
      \node[circle, fill=orange, inner sep=2.0pt] () at (1,-1) {};
      \node[circle, fill=orange, inner sep=2.0pt] () at (0,1) {};
      \node[circle, fill=orange, inner sep=2.0pt] () at (1,0) {};
       \node[circle, fill=orange, inner sep=2.0pt] () at (-1,1) {};
      \node[circle, fill=orange, inner sep=2.0pt] () at (-1,0) {};
    \end{tikzpicture}}
    \caption{An illustration of \Cref{lemm: MinkowskiReason} for the vertical system in \Cref{ex: TangentExample}. In the first image, we see the vertical system with the polytope $\Delta_F$ drawn in green. In the second, we draw the polyhedra $P_i^{\sigma}$ in \eqref{eq: polyhedra} corresponding to the vertex $(-1,-1)$ of $\Delta_F$. Outside the \textcolor{green}{green} (resp. \textcolor{blue}{blue}) dashed polyhedra, all vectors $\gamma_i$ are contained in $\langle e_2 \rangle$ (resp. $\langle e_1 \rangle$).}
    \label{fig: exampleIntro2}
\end{figure}

\begin{example}
\label[example]{ex: exampleContinued2}
We consider the vertically parametrized system $F = C(u \star x^B) \subset \mathbb{C}^2 \otimes \mathbb{C}[x_1^{\pm},x_2^{\pm}]$ from \eqref{first_system}, which has matrices
\[
C = \begin{pmatrix}
    1 & 0 & 1 & 1 & 1 & 1 \\
    0 & 1 & 1 & 2 & 1 & 1
\end{pmatrix}, \qquad
B = \begin{pmatrix}
    0 & 0 & 1 & 0 & 1 & 0 \\
    0 & 0 & 0 & 1 & 1 & 1
\end{pmatrix}.
\]
A straightforward computation reveals that
$$\Delta_F = \conv \{(0,0),(0,2),(1,0),(1,2)\}.$$
The normal fan $\Sigma \subset N_{\R}$ of $\Delta_F$ has rays $\Sigma(1) = \{\rho_1,\rho_2,\rho_3,\rho_4\}$ with primitive generators, $\{(1,0),(0,1),(-1,0),(0,-1)\}$, respectively. The underlying toric variety is $X_{\Sigma} = \mathbb{P}^1 \times \mathbb{P}^1$, which is smooth and projective. The toric vector bundle $\mathcal{E}_F$ associated to this system is given by the Klyachko filtration $(E_{\rho}^{\bullet})_{\rho \in \Sigma(1)}$ with terms
\begin{center}
\begin{tabular}{c|c|c|c}
$\Sigma(1)$  & $E_{\rho}^{0}$ & $E_{\rho}^{1}$ & $E_{\rho}^{2}$ \\ \hline
$(1,0)$  & $\mathbb{C}^2$ & $0$ & $0$ \\
$(0,1)$  & $\mathbb{C}^2$ & $0$ & $0$\\
$(-1,0)$   & $\mathbb{C}^2$ & $\mathbb{C}^2$ & $0$\\
$(0,-1)$  & $\mathbb{C}^2$ & $\langle(1,1)\rangle$ & $0$
\end{tabular}.
\end{center}
One can verify that the toric vector bundle $\mathcal{E}_F$ can be split as $$\mathcal{E}_F \cong \mathcal{O}(1,1) \oplus \mathcal{O}(0,1),$$ by showing that the filtrations in $\EFilt$ can be written as a direct sum of filtrations over a $1$-dimensional vector space.
\end{example}

\begin{example}
\label[example]{ex: TangentExample}
    Consider the polynomial system $F = C(u \star x^B)$ given by matrices:
    $$C = \begin{pmatrix}
        1 & 1 & 0 & 0 & -1 & -1 \\
        0 & 0 & 1 & 1 & -1 & -1
    \end{pmatrix}, \quad B = \begin{pmatrix}
        -1 & -1 & 1 & 0 & 1 & 0 \\
        1 & 0 & -1 & -1 & 0 & 1
    \end{pmatrix},$$
    with parameters $u_1,\dots,u_6$. This corresponds to the system of Laurent polynomial equations
\begin{equation}
\label{second_system}
\begin{cases}
u_1 x^{-1} y + u_2 x^{-1}  \hspace{3.1cm}- u_5 x - u_6y= 0,\\[4pt]
\hspace{3.1cm} u_3xy^{-1} + u_4 y^{-1} - u_5 x - u_6y = 0.
\end{cases}
\end{equation}
The polytope $\Delta_F$ equals $\conv \{(-1,-1),(-1,2),(2,-1)\}$. The toric variety $X_{\Sigma}$ given by $\Sigma_F$ equals $\mathbb{P}^2$, which is smooth and projective. In particular, $\Sigma(1) = \{\rho_1,\rho_2,\rho_3\}$, where the primitive generators of the rays are $$ \{(-1,-1),(1,0),(0,1)\} \subset \mathbb{Z}^2.$$ The Klyachko filtration providing the toric vector bundle $\mathcal{E}_F$ associated to this system equals
\begin{equation}
\label{ex: KlyachkoOfTangent}
\begin{tabular}{c|c|c|c}
$\Sigma(1)$ & $E_{\rho}^0$ & $E_{\rho}^1$& $E_{\rho}^2$\\ \hline

$(-1,-1)$ & $\mathbb{C}^2$ & $\langle -e_1 - e_2\rangle$ & $0$ \\

$(1,0)$  & $\mathbb{C}^2$ & $\langle e_1 \rangle$ & $0$ \\

$(0,1)$ & $\mathbb{C}^2$ & $\langle e_2\rangle$ & $0$
\end{tabular}.
\end{equation}

 Note that this toric vector bundle does not split as a direct sum of line bundles. In fact, $\mathcal{E}_F$ equals the tangent bundle over $\mathbb{P}^2$ (see \cite[Example 3.8]{parliament}). In particular, we can see that for all linear transformations of the system \eqref{first_system}, there exists a face of their Newton polytopes which is not overdetermined.

\end{example}

\subsection{Homogenization}

In order to define homogenization of a vertical system that will be used to attach a toric vector bundle, we first need to recall the toric way to homogenize a polynomial system, which has been used previously in the context of studying parametric polynomial systems \cite{bender2021toric, dandrea2025sparse}.

Throughout this section $R$ denotes the Cox ring of the variety $X_{\Sigma}$ and $R^{\pm} \coloneqq \mathbb{C}[z_{\rho}^{\pm} : \: \rho \in \Sigma(1)]$ denotes the ring of Laurent polynomials in the variables of the Cox ring. Similarly to \eqref{eq: CoxRing}, we have
$$R^{\pm} = \bigoplus_{\alpha \in \Cl(X_{\Sigma})}R^{\pm}_{\alpha} \cong \bigoplus_{\alpha \in \Cl(X_{\Sigma})}H^0(\Tn,\mathcal{O}(D))$$
where $\alpha = [D] \in \Cl(X_{\Sigma})$. Moreover, we will denote by $A = \mathbb{C}[u_1,\dots,u_m]$ the ring of coefficients. The following map provides a method to homogenize Laurent polynomials in $\C[x^{\pm}]$ to polynomials in $R^{\pm}$.
\begin{lemma}
\label[lemma]{lemma: homogenize}
Let $X_{\Sigma}$ be a complete toric variety. For each $D = \sum_{\rho \in \Sigma(1)}a_{\rho}D_{\rho} \in \Div(X_{\Sigma})$ with $\alpha = [D] \in \Cl(X_{\Sigma})$, the following hold:
\begin{enumerate}[label = (\roman*)]
    \item $z^{\mathbf{V}b + a} \in R^{\pm}_{\alpha}$  and
    \item $z^{\mathbf{V}b + a} \in R_{\alpha}$, if and only if $b \in \Delta_D$.
\end{enumerate}
\end{lemma}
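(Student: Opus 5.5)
The plan is to use the exact sequence \eqref{eq:ses} and the definition of the grading on $R^{\pm}$ directly. The degree of a Laurent monomial $z^{c}$ with $c \in \mathbb{Z}^{\Sigma(1)}$ is $\pi(c) \in \Cl(X_{\Sigma})$. Here $\mathbf{V}b$ denotes the vector $(\langle v_{\rho}, b\rangle)_{\rho \in \Sigma(1)} \in \mathbb{Z}^{\Sigma(1)}$, and $a = (a_{\rho})_{\rho}$ is the coefficient vector of $D$, so that $[D] = \pi(a)$.

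For part (i), the exponent of $z^{\mathbf{V}b + a}$ is $\mathbf{V}b + a$, and its degree is
\[
\pi(\mathbf{V}b + a) = \pi(\mathbf{V}b) + \pi(a) = 0 + [D] = \alpha .
\]
Here $\pi \circ \mathbf{V} = 0$ by exactness of \eqref{eq:ses}, which is the statement that principal divisors $\operatorname{div}(x^b) = \sum_{\rho}\langle v_{\rho},b\rangle D_{\rho}$ are trivial in $\Cl(X_{\Sigma})$. Since $z^{\mathbf{V}b+a}$ is a Laurent monomial of degree $\alpha$, it lies in $R^{\pm}_{\alpha}$.

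For part (ii), a Laurent monomial lies in the polynomial ring $R$ exactly when all of its exponents are nonnegative. So $z^{\mathbf{V}b+a} \in R$ if and only if $\langle v_{\rho}, b\rangle + a_{\rho} \geq 0$ for every $\rho \in \Sigma(1)$. By \eqref{eq:polytope}, this is precisely the condition $b \in \Delta_D$, since $b \in M$ and the defining inequalities are $\langle v_{\rho}, m\rangle \geq -a_{\rho}$. Combined with (i), this gives $z^{\mathbf{V}b+a} \in R_{\alpha} = R \cap R^{\pm}_{\alpha}$ if and only if $b \in \Delta_D$.

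There is no real obstacle here. The only point needing care is the sign convention: $\Delta_D$ is defined with $\langle v_{\rho}, m\rangle \geq -a_{\rho}$, following \cite{coxlittleschneck}, so the homogenizing exponent must be $\mathbf{V}b + a$ rather than $a - \mathbf{V}b$. One should also confirm that $R_{\alpha}$ consists of the elements of $R$ whose monomials all have degree $\alpha$, which holds because the grading on $R$ is the restriction of the grading on $R^{\pm}$.
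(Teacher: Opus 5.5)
Your proposal is correct and follows essentially the same argument as the paper: part (i) holds because the grading is given by $\pi$ and $\pi\circ\mathbf{V}=0$, and part (ii) holds because nonnegativity of the exponents $\langle v_{\rho},b\rangle + a_{\rho}$ is exactly the set of defining inequalities of $\Delta_D$. You are slightly more explicit than the paper in citing exactness of \eqref{eq:ses} for $\pi(\mathbf{V}b)=0$ and in noting that $R_{\alpha}=R\cap R^{\pm}_{\alpha}$, which are welcome clarifications.
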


\begin{proof}
As the Gale dual $\pi$ of $\mathbf{V}$ defines the grading in the ring $R_{\alpha}^{\pm}$, we have that $\deg(\mathbf{V}b + a) = [D] \in \Cl(X_{\Sigma})$. Moreover, in order for $z^{\mathbf{V}b + a}$ to be globally defined, we need to verify that $z^{\mathbf{V}b + a} \in R_{\alpha}$. As the rows of $\mathbf{V}$ are the primitive generators of the rays in $\Sigma(1)$, this amounts to verifying that
\[\langle v_{\rho},b\rangle + a_{\rho} \geq 0, \quad \forall \rho \in \Sigma(1).
\]
This happens if and only if $b \in \Delta_{D}$, concluding the proof.
\end{proof}

Together with the vector bundle $\mathcal{E}_F$, we also attach to $F$ the family of $\Tn$-invariant Weil divisors $D_{F,1},\dots,D_{F,s} \in \Div(X_{\Sigma})$ defined as
$$D_{F,\ell} \coloneqq
    D_{\mathcal{E}_F,\ell} = \sum_{\rho \in \Sigma(1)}\omega_{\rho,\ell}D_{\rho}, \quad \ell \in \{1,\dots,s\}
$$
i.e. the jump divisors associated to $\mathcal{E}_F$, as in \eqref{eq: jump}. We denote by $\omega_{\ell} \in \mathbb{Z}^{\Sigma(1)}$ the vector of coefficients of $D_{F,\ell}$, and by $\Delta_{F,\ell}$ the polytope associated with $D_{F,\ell} $, as in \eqref{eq:polytope}, for $\ell = 1,\dots,s$. We also denote
\begin{equation}
\label{lambdaj}
\lambda_{\ell} = \sum_{j = 1}^{\ell}\omega_{s - j + 1}.
\end{equation}

\begin{definition}
\label[definition]{def: sectionhomgenization}
Let $F = C(u \star x^B)$ be a vertical system and let $(X_{\Sigma},\mathcal{E}_F)$ be a vertical pair associated to it. For $\ell \in \{1,\dots,s\}$, we define the tensor
\begin{equation}
\label{eq: widetilde}\widetilde{F}^{\ell} \coloneqq \sum_{i = 1}^m u_i  (\gamma_i \otimes z^{\mathbf{V}b_i + \omega_{\ell}}) \in A \otimes  \mathbb{C}^s  \otimes R^{\pm}.
\end{equation}
In particular, we denote $\widetilde{F} \coloneqq \widetilde{F}^{s}$.
\end{definition}

For each $u \in \C^m$ and $\ell \in \{1,\dots,s\}$, the specialization $\widetilde{F}^{\ell}_{u} \in \mathbb{C}^s  \otimes R^{\pm}$ describes a tuple of Laurent polynomials of degree $[D_{F,\ell}] \in \Cl(X_{\Sigma})$. In particular, as $\widetilde{F}^{\ell}_{u}$ is a tuple of rational functions whose denominators do not vanish on $\Tn$, we can consider its zero locus $V_{\Tn}(\widetilde{F}^{\ell}_{u})$.

\begin{lemma}
\label[lemma]{lemmaSectionsTorus}
Let $F = C(u \star x^B)$ be a vertical system and let $(X_{\Sigma},\mathcal{E}_F)$ be a vertical pair associated to it. For all $u \in \C^m$ and $\ell \in \{1,\dots,s\}$, there exists an isomorphism of schemes $$V_{\Tn}(\widetilde{F}^{\ell}_{u}) \cong V_{\Tn}(F_{u}).$$
\end{lemma}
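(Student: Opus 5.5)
The plan is to compare $\widetilde{F}^{\ell}_u$ with $F_u$ directly on the torus, using the standard identification of $\Tn$ with the open subset of $X_{\Sigma}$ where all Cox variables are nonzero. On this locus the toric variety is the geometric quotient $(\C^*)^{\Sigma(1)}/G$, where $G = \operatorname{Hom}_{\mathbb{Z}}(\Cl(X_{\Sigma}),\C^*)$. The map $(\C^*)^{\Sigma(1)} \to \Tn$ is $z \mapsto x = (z^{\mathbf{V}e_1},\dots,z^{\mathbf{V}e_n})$, i.e. $x^b = z^{\mathbf{V}b}$. This is the usual identification from the short exact sequence \eqref{eq:ses}.

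The key computation is the factorization
\[
\widetilde{F}^{\ell}_u = \sum_{i=1}^m u_i\,\gamma_i\otimes z^{\mathbf{V}b_i}\, z^{\omega_\ell} = z^{\omega_\ell}\cdot \Phi^*(F_u),
\]
where $\Phi^*$ denotes the pullback $\C[x^{\pm}]\to R^{\pm}$, $x^b\mapsto z^{\mathbf{V}b}$, applied componentwise on $\C^s\otimes\C[x^{\pm}]$. This holds because the same shift $\omega_\ell$ is applied to every monomial, independently of $i$. Since $z^{\omega_\ell}$ is a unit in $R^{\pm}$, the ideal generated by the $s$ coordinates of $\widetilde{F}^{\ell}_u$ in $R^{\pm}$ equals the extension of the ideal $\langle F_u\rangle\subset\C[x^{\pm}]$ along $\Phi^*$.

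To get a scheme on $\Tn$ from a tuple of degree $[D_{F,\ell}]$, the zero scheme $V_{\Tn}(\widetilde{F}^{\ell}_u)$ should be read as follows. Trivialize the line bundle $\mathcal{O}(D_{F,\ell})$ on $\Tn$ by dividing by the unit $z^{\omega_\ell}$, which is a fixed monomial of degree $[D_{F,\ell}]$ invertible on the torus. This gives degree-zero elements of $(R^{\pm})_0$. Then $(R^{\pm})_0 = \C[z^{\mathbf{V}b} : b\in M] \cong \C[x^{\pm}] = \C[\Tn]$, where injectivity of $\mathbf{V}$ (the fan is complete, so the rays span $N_{\R}$) makes $b\mapsto z^{\mathbf{V}b}$ an isomorphism of group algebras onto the degree-zero part. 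Under this isomorphism the dehomogenized tuple $\widetilde{F}^{\ell}_u/z^{\omega_\ell}$ is exactly $F_u$. Hence the ideals agree in $\C[\Tn]$, and the subschemes coincide.

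Any other choice of trivializing unit of degree $[D_{F,\ell}]$ differs by a unit of degree zero, that is, by a monomial $x^c$, which does not change the ideal. So the identification is canonical up to units. The only real care is in this step, namely stating precisely what "zero scheme of a tuple of Laurent polynomials of nonzero degree" means. Once it is defined via the trivialization on the torus chart, i.e. via the chart $U_{\{0\}}=\Tn$ with $R_{\{0\}} = (R^{\pm})_0$, the rest is immediate.

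Independence from $\ell$ is automatic, because $\omega_\ell$ only enters through the unit factor.
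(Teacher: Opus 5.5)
Your proof is correct, but it takes a different route from the paper's.

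\textbf{The paper's route.} The paper fixes a smooth maximal cone $\sigma$; this is where the smooth-cone condition in the definition of a vertical pair is used. It then does three things:
\begin{enumerate}
\item multiplies $\widetilde{F}^{\ell}$ by a monomial so that $\omega_{\rho,\ell}=0$ for $\rho\in\sigma(1)$, using Cartier data of $D_{F,\ell}$ on $U_\sigma$;
\item restricts to the slice $z_\rho=1$ for $\rho\notin\sigma(1)$;
\item identifies $\prod_j x_j^{\langle v_j,b_i\rangle}$ with $x^{b_i}$ through the unimodular automorphism $x_i\mapsto x^{v_i}$ of $\Tn$.
\end{enumerate}

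\textbf{Your route.} You work on the torus chart itself, using $\C[\Tn]=(R^{\pm})_0=\C[z^{\mathbf{V}b} : b\in M]\cong\C[x^{\pm}]$, which follows from exactness of \eqref{eq:ses}. You then observe that the single unit $z^{\omega_\ell}$ dehomogenizes $\widetilde{F}^{\ell}_u$ to exactly $F_u$.

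\textbf{What each approach buys.} Your argument is coordinate-free and does not need the smooth maximal cone at all. It only uses that $\mathbf{V}$ is injective with $\ker\pi=\operatorname{im}\mathbf{V}$. It also gives an equality of ideals, hence of subschemes, directly. The paper's argument is phrased at the level of points. It relies on smoothness so that the slice $(\C^*)^{\sigma(1)}\times\{1\}\to\Tn$ is an isomorphism rather than a finite cover, as it would be for a non-unimodular simplicial cone. What the paper's version gains is explicit coordinates adapted to the chart $U_\sigma$, matching the local computations used later in \Cref{thm: Torus}(ii) and \Cref{thm: complexIsWellDefined}.
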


\begin{proof}
Let $\sigma \in \Sigma(n)$ be a smooth maximal cone with rays $\{\rho_1,\dots,\rho_n\}$ and generators $\{v_1,\dots,v_n\}$, which form a $\mathbb{Z}$-basis of $\mathbb{Z}^n$. Therefore, the map
\begin{equation}
\label{eq: theisomorphism}
\Tn \xrightarrow{} \Tn ,\quad x_i \xrightarrow[]{} x^{v_{i}} \coloneqq \prod_{j = 1}^n x_j^{v_{i,j}}.
\end{equation}
is an automorphism of $\Tn$. For each $\overline{x} \in \Tn$, we can define $\overline{z} \in  X_{\Sigma}$ (in the coordinates of the Cox ring) as
$$\overline{z}_{\rho} \coloneqq \begin{cases}
    x_i & \rho = \rho_i \\
    1 & \text{otherwise.}
\end{cases}.$$
Since $U_{\sigma}$ is smooth, the restriction of every Weil divisor to $U_{\sigma}$ is Cartier \cite[Proposition 4.2.6]{coxlittleschneck}. Therefore, we may assume that there exists $m_{\sigma} \in \mathbb{Z}^n$ satisfying \eqref{eq: cartierData} for $D_{F,\ell}$.
Up to multiplying $\widetilde{F}^{\ell}$ by a monomial (which does not change $V_{\Tn}(\widetilde{F}^{\ell})$), we may assume that $m_{\sigma} = 0$, and so  $\omega_{\rho,\ell} = 0$ for all $\rho \in \sigma(1)$. Therefore,
$$\widetilde{F}_u^{\ell}(\overline{z}) = \sum_{i = 1}^m u_i(\gamma_i \otimes \overline{z}^{\mathbf{V}b_i + \omega_{\ell}}) = \sum_{i = 1}^{m}u_i\big(\gamma_i \otimes \prod_{j = 1}^n \overline{x}_j^{\langle v_{j},b_i \rangle}\big)$$
Using the isomorphism \eqref{eq: theisomorphism}, we see that $F_u(x) = 0$, if and only if $\widetilde{F}_u^{\ell}(z) = 0$.
\end{proof}

\subsection{Vertical systems and homogeneous ideals}

Next, we construct a homogeneous ideal $I(F) \subset R$ in the Cox ring of the toric variety $X_{\Sigma}$ which has the same zeros in $\Tn$ as the vertically parametrized system $F = C(u \star x^B)$, i.e. $V_{\Tn}(F) = V_{\Tn}(I(F))$. In addition, we show that for generic values of $u \in \mathbb{C}^m$, $V(I(F_u))$ has no solutions at infinity.

\begin{definition}
\label[definition]{def: HE}
For a toric vector bundle $\mathcal{E}$, with Klyachko filtration $\EFilt$, we define the vector space
$$H_{\mathcal{E}} \coloneqq \langle e \otimes z^{\mathbf{V}b + \omega_1} : \: b \in \mathbb{Z}^n, \: e \in V^b\rangle \subset \mathbb{C}^s \otimes R_{\alpha}^{\pm}$$
where $\alpha = [D_{\mathcal{E},1}]$ and $D_{\mathcal{E},1} = \sum_{\rho \in \Sigma(1)}\omega_{\rho,1}D_{\rho} \in \Div(X_{\Sigma})$.
\end{definition}

\begin{lemma}
\label[lemma]{thm: globalSections}
Let $\mathcal{E}$ be a toric vector bundle over a toric variety $X_{\Sigma}$ with Klyachko filtration $\EFilt$. Then,
\begin{enumerate}[label = (\roman*)]
    \item If $b \notin \Delta_{D_{\mathcal{E},1}}$, then $V^b = 0$.
    \item $H_{\mathcal{E}} \subset \mathbb{C}^s \otimes R_{\alpha}$. In particular, if $e \otimes z^{\mathbf{V}b + \omega_1} \in H_{\mathcal{E}}$, then $b \in \Delta_{\mathcal{E}}(e)$.
    \item $H^0(X_{\Sigma},\mathcal{E}) \cong H_{\mathcal{E}}$.
\end{enumerate}

\end{lemma}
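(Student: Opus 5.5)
The plan is to reduce all three parts to the description of $V^b$ in \Cref{relevant_linear_spaces} and to \Cref{thm: globalSectionsVu}. For (i), I will take $b \in M$ with $V^b \neq 0$ and some $0 \neq e \in V^b$. By definition $e \in E_\rho^{-\langle v_\rho, b\rangle}$ for every $\rho$, so $E_\rho^{-\langle v_\rho,b\rangle} \neq 0$, that is, $\dim E_\rho^{-\langle v_\rho,b\rangle} \geq 1$. Since $\omega_{\rho,1}$ is the largest $i$ with $\dim E^i_\rho \geq 1$, this gives $-\langle v_\rho,b\rangle \leq \omega_{\rho,1}$. Equivalently $\langle v_\rho,b\rangle \geq -\omega_{\rho,1}$ for all $\rho\in\Sigma(1)$, which says $b \in \Delta_{D_{\mathcal{E},1}}$ by \eqref{eq:polytope}. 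The contrapositive is (i).

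For (ii), each generator $e\otimes z^{\mathbf{V}b+\omega_1}$ of $H_{\mathcal{E}}$ has $e\in V^b$, and we may assume $e\neq 0$. By (i), $b\in\Delta_{D_{\mathcal{E},1}}$, and \Cref{lemma: homogenize}(ii) applied to $D = D_{\mathcal{E},1}$ gives $z^{\mathbf{V}b+\omega_1}\in R_\alpha$. Hence $H_{\mathcal{E}}\subset \mathbb{C}^s\otimes R_\alpha$. For the second claim, $e\in V^b$ means $e\in E_\rho^{-\langle v_\rho,b\rangle}$, so $a_{\rho,e}\geq -\langle v_\rho,b\rangle$ by \eqref{eq: divisore}. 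This is exactly the defining inequality of $\Delta_{\mathcal{E}}(e)$ in \eqref{eq: Deltae}. One small point needs care: a general element of $H_{\mathcal{E}}$ of the form $e\otimes z^{\mathbf{V}b+\omega_1}$ could a priori be a sum of generators. But the monomials $z^{\mathbf{V}b+\omega_1}$ for distinct $b$ are distinct, because $\mathbf{V}$ is injective by \eqref{eq:ses}. So $H_{\mathcal{E}}=\bigoplus_b V^b\otimes z^{\mathbf{V}b+\omega_1}$, and such an element has $e\in V^b$.

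For (iii), I will use \Cref{thm: globalSectionsVu}, which identifies $H^0(X_\Sigma,\mathcal{E})$ with $\bigoplus_b V^b\otimes x^b\subset\mathbb{C}^s\otimes\mathbb{C}[x^{\pm}]$. I then define the linear map $e\otimes x^b\mapsto e\otimes z^{\mathbf{V}b+\omega_1}$, which is the restriction of the ring-compatible map $\mathbb{C}[x^{\pm}]\to R^{\pm}$, $x^b\mapsto z^{\mathbf{V}b+\omega_1}$. This map is injective on monomials by injectivity of $\mathbf{V}$, so it is injective, and by the direct sum decomposition above its image is precisely $H_{\mathcal{E}}$. That gives the isomorphism. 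The main (mild) obstacle is this bookkeeping of the decomposition into $b$-graded pieces. It is needed to conclude that the image is exactly $H_{\mathcal{E}}$ and to justify the ``in particular'' in (ii); everything else is unwinding the definitions.
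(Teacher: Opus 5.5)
Your proposal is correct and follows essentially the same route as the paper: (i) and (ii) come from unwinding the definitions of $V^b$, $\omega_{\rho,1}$ and $a_{\rho,e}$, and (iii) is the homogenization $e\otimes x^b\mapsto e\otimes z^{\mathbf{V}b+\omega_1}$ applied to \Cref{thm: globalSectionsVu}. The only differences are cosmetic. The paper gets both claims of (ii) at once from $\langle v_\rho,b\rangle+\omega_{\rho,1}\geq\langle v_\rho,b\rangle+a_{\rho,e}\geq 0$, whereas you route the first claim through (i) and \Cref{lemma: homogenize}. Your use of the injectivity of $\mathbf{V}$ also makes explicit the direct-sum decomposition the paper leaves implicit; note that $x^b\mapsto z^{\mathbf{V}b+\omega_1}$ is only a linear map (a ring map followed by multiplication by $z^{\omega_1}$), not a ring homomorphism, but linearity and injectivity are all you use.
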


\begin{proof}
If $b \notin \Delta_{D_{\mathcal{E},1}}$, then there exists $\rho \in \Sigma(1)$ such that $-\langle v_{\rho},b \rangle > \max\{i \in \mathbb{Z} : \: E_{\rho}^i \neq 0\}$. This implies that $E_{\rho}^{-\langle v_{\rho},b \rangle} = 0$, and so $V^b = 0$, proving part (i).  Moreover, if $e \in V^{b}$, then
$e \in E_{\rho}^{-\langle v_{\rho},b \rangle}$ for all $\rho \in \Sigma(1)$, which implies that $$\langle v_{\rho},b \rangle + \omega_{\rho,1} \geq \langle v_{\rho},b \rangle +\max (i \in \mathbb{Z} : \: e \in E^{i}_{\rho}) \geq 0,$$ deducing (ii). By the isomorphism in \Cref{thm: globalSectionsVu}, (iii) follows from homogenizing the monomials $e \otimes x^b \in \mathbb{C}^s \otimes \C[x^{\pm}]$.
\end{proof}

\begin{remark}
\label[remark]{rk: nonGlobalSections}
Note that the isomorphism in \cref{thm: globalSections} implies that describing a map between the global sections of two toric vector bundles $\mathcal{E},\mathcal{E}'$ amounts to describing a map between $H_{\mathcal{E}}$ and $H_{\mathcal{E}'}$. Moreover, sections of a vector bundle $\mathcal{E}$ over an open subset $U$ can be (locally) written as $f/g$ where $f \in H^0(X_{\Sigma},\mathcal{E} \otimes \mathcal{O}(D))$ and $g \in H^0(X_{\Sigma}, \mathcal{O}(D))$ for some $D \in \Div(X_{\Sigma})$ where $g$ does not vanish on $U$. Therefore, \Cref{thm: globalSections} can also be used to describe local sections of $\mathcal{E}$.
\end{remark}

Note that if $\f \in (\mathbb{C}^s)^{\vee}$, we may consider the polynomial
$$\f(\widetilde{F}^{\ell}) = \sum_{j = 1}^m\f(\gamma_j)u_jz^{\mathbf{V}b_j + \omega_{\ell}} \in A \otimes R^{\pm}.$$
The construction of the ideal $\IFk \subset R$ that homogenizes the elements in the vertical family relies on the following lemma.

\begin{lemma}
\label[lemma]{lemma: mainHomogenization}
      Let $F = C(u \star x^B)$ be a vertically parametrized system, let $(X_{\Sigma},\mathcal{E}_F)$ be a vertical pair associated to $F$ and let $D = \sum_{\rho \in \Sigma(1)}a_{\rho}D_{\rho} \in \Div(X_{\Sigma})$.  For each $\ell \in \{1,\dots,s\}$, we consider the toric vector bundle
$$ \mathcal{E}^{\ell}(D) \coloneqq \wedge^{\ell}\mathcal{E}^{\vee}_F \otimes \mathcal{O}(D).$$

Then, for each $(\f_1 \wedge \dots \wedge \f_{\ell}) \otimes z^{m} \in H_{\mathcal{E}^{\ell}(D)}$ and $j \in \{1,\dots,\ell\}$, we have
      $$(\varphi_1 \wedge \dots \wedge \widehat{\f_j} \wedge \dots \wedge \f_{\ell}) \otimes z^m \f_j(\widetilde{F}_u^{s - \ell + 1})\in H_{\mathcal{E}^{\ell - 1}(D)}.$$
    for all $u \in \mathbb{C}^m$.
\end{lemma}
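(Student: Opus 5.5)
The plan is to reduce the statement to a pointwise inequality between filtration levels along each ray $\rho$, after first matching up the degrees. By \Cref{def: HE} and linearity, I may assume that $(\f_1\wedge\dots\wedge\f_\ell)\otimes z^m$ is a single generator $\psi\otimes z^{\mathbf{V}b+\omega_1(\mathcal{E}^{\ell}(D))}$. Here $\psi=\f_1\wedge\dots\wedge\f_\ell$, $b\in\mathbb{Z}^n$, and $\psi\in V^b$ computed for $\mathcal{E}^{\ell}(D)$. I will write $\psi_j:=\f_1\wedge\dots\wedge\widehat{\f_j}\wedge\dots\wedge\f_\ell$. Expanding
\[
\f_j(\widetilde F^{s-\ell+1}_u)=\sum_{i=1}^m u_i\,\f_j(\gamma_i)\,z^{\mathbf{V}b_i+\omega_{s-\ell+1}}
\]
turns the element in question into $\sum_i u_i\f_j(\gamma_i)\,\psi_j\otimes z^{\mathbf{V}(b+b_i)+\omega_1(\mathcal{E}^{\ell}(D))+\omega_{s-\ell+1}}$. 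It therefore suffices to treat each index $i$ with $\f_j(\gamma_i)\neq 0$ separately. For such an $i$ I need two things: the exponent has the correct shape, and $\psi_j\in V^{b+b_i}$ for $\mathcal{E}^{\ell-1}(D)$.

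For the exponent, I will use \Cref{prop: maxminwedgeDual} together with \eqref{eq: tensorwithaFiltration}. These give $D_{\wedge^\ell\mathcal{E}_F^\vee,1}=\sum_{j\le\ell}D_{\mathcal{E}_F^\vee,j}=-\sum_{j\le\ell}D_{F,s-j+1}$. Hence $\omega_1(\mathcal{E}^{\ell}(D))=a-\lambda_\ell$, where $a$ is the coefficient vector of $D$ and $\lambda_\ell$ is as in \eqref{lambdaj}. Since $\lambda_\ell-\omega_{s-\ell+1}=\lambda_{\ell-1}$, the exponent equals $\mathbf{V}(b+b_i)+\omega_1(\mathcal{E}^{\ell-1}(D))$, which is exactly the form required by \Cref{def: HE}.

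For the filtration condition, fix $\rho$ and set $c=-\langle v_\rho,b\rangle-a_\rho$ and $k=-\langle v_\rho,b_i\rangle$. By \eqref{eq: tensorwithaFiltration}, the hypothesis reads $\psi\in(\wedge^\ell E^\vee)^c_\rho$, and by \eqref{eq: klyachko_data_vps} we have $\gamma_i\in E^k_\rho$. The goal is $\psi_j\in(\wedge^{\ell-1}E^\vee)^{c-k}_\rho$. Let $W=\langle\f_1,\dots,\f_\ell\rangle$ and $W'=\langle \f_t: t\neq j\rangle$, a hyperplane in $W$. For a decomposable wedge, the level in \eqref{eq: wedgeFiltration} equals the sum of the jumps of the filtration induced on its span, computed in an adapted basis exactly as in the proof of \Cref{prop: maxminwedgeDual}. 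This yields the identity $\mathrm{lev}(W)=\mathrm{lev}(W')+\tau$, where $\tau$ is the jump of the quotient filtration on the line $W/W'$. It therefore suffices to prove $\tau\le -k$.

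This last inequality is the main obstacle. If $W'=W\cap\gamma_i^{\perp}$, then every $\f\in W\setminus W'$ satisfies $\f(\gamma_i)\neq0$. By \eqref{eq: dualFiltration}, $\f\in(E^\vee)^t_\rho$ forces $\f$ to kill $E^{1-t}_\rho$, so $E^{1-t}_\rho\not\ni\gamma_i$, i.e. $1-t>k$ and $t\le -k$, as needed. For a general $W'$ (containing $\f_j$'s complement but not equal to $W\cap\gamma_i^\perp$), I would first try to use the local structure from \Cref{lemm: MinkowskiReason}(ii). Near a maximal cone containing $\rho$, the filtration $E^\bullet_\rho$ is spanned by the coordinate vectors $\gamma_t$, $t\in\mathcal{I}$, so $E^\vee_\rho$ is filtered by the dual basis. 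I would then compare $\mathrm{lev}(W')$ with $\mathrm{lev}(W\cap\gamma_i^\perp)$ using \eqref{eq: sum} and the fact that any $\f\in W$ with $\f(\gamma_i)\neq 0$ has level at most $-k$. If this comparison fails for arbitrary decomposable generators, the fallback is to work with an adapted basis of $W$: choose $\f_1,\dots,\f_\ell$ so that each $W'$ is spanned by basis vectors of the induced filtration, which does not change the span $H_{\mathcal{E}^\ell(D)}$. In that basis the jump bound follows from the first case applied along each flag step.
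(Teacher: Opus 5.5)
Your reduction is correct as far as it goes. The exponent bookkeeping ($\omega_1(\mathcal{E}^{\ell}(D))=a-\lambda_\ell$ and $\lambda_\ell-\omega_{s-\ell+1}=\lambda_{\ell-1}$) is the paper's, and the additivity $\mathrm{lev}(W)=\mathrm{lev}(W')+\tau$ does reduce each ray to $\tau\le -k$. One slip: the target level is $-\langle v_\rho,b+b_i\rangle-a_\rho=c+k$, not $c-k$; your inequality $\tau\le -k$ is the one that matches $c+k$.

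The genuine gap is the case of a general hyperplane $W'=\langle \varphi_t : t\neq j\rangle$. It cannot be filled, because the termwise statement is false for a general decomposition. Take \Cref{ex: TangentExample} with $\ell=s=2$ and $D=D_0+D_1+D_2$ as in \Cref{ex: continueTangentBundle}. Let $e_1^{\vee},e_2^{\vee}$ be the dual basis and $\psi=e_1^{\vee}\wedge e_2^{\vee}$; then $H_{\mathcal{E}^{2}(D)}$ is spanned by $\psi\otimes 1$. For $\mathcal{E}_F^{\vee}(D)$ one finds $V^{b_1}=V^{b_2}=\langle e_2^{\vee}\rangle$, $V^{b_3}=V^{b_4}=\langle e_1^{\vee}\rangle$ and $V^{b_5}=V^{b_6}=\langle e_1^{\vee}-e_2^{\vee}\rangle$. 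Take $\varphi_1=e_1^{\vee}$, $\varphi_2=e_2^{\vee}$ and $j=1$. The summand of $\varphi_2\otimes\varphi_1(\widetilde{F}^{1}_u)$ indexed by $b_5$ is $-u_5\,e_2^{\vee}\otimes z^{\mathbf{V}b_5+\omega_1}$, which is not in $H_{\mathcal{E}^{1}(D)}$.

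In fact no decomposition works. Recall $\gamma_1=\gamma_2=e_1$, $\gamma_3=\gamma_4=e_2$ and $\gamma_5=\gamma_6=-e_1-e_2$. A nonzero $\varphi_1$ is nonzero on at least two of $e_1,e_2,e_1+e_2$, which forces $\varphi_2$ into two distinct lines. So \emph{no} decomposition $\psi=\varphi_1\wedge\varphi_2$ satisfies the conclusion.

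This rules out your comparison via \eqref{eq: sum}. It also rules out your fallback to an adapted basis, which would change the terms being asserted and would have to be adapted at every ray simultaneously. The paper's proof has the same hole. It asserts that the \emph{given} factors satisfy $\varphi_j\in(E^{\vee})^{i_j}_\rho$ with $\sum_j i_j\ge -\langle v_\rho,b\rangle-a_\rho$. But at the ray through $(-1,-1)$ both $e_1^{\vee}$ and $e_2^{\vee}$ have level $-1$, so the best possible sum is $-2<-1$.

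What is true, and all that \Cref{thm: complexIsWellDefined} uses, is the statement for the alternating sum $\iota_{\gamma_i}\psi=\sum_j(-1)^{j-1}\varphi_j(\gamma_i)\,\psi_j$, which does not depend on the decomposition. Your first case proves exactly this. If some $\varphi_j(\gamma_i)\neq 0$, then $\iota_{\gamma_i}\psi$ is a nonzero multiple of the wedge of a basis of $W\cap\gamma_i^{\perp}$, and your bound $\tau\le -k$ applies.

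A cleaner argument also covers non-decomposable elements of $V^b$. If $\gamma\in E^{k}_\rho$, then $\iota_\gamma$ maps $(\wedge^{\ell}E^{\vee})^{c}_\rho$ into $(\wedge^{\ell-1}E^{\vee})^{c+k}_\rho$. To check this, use the spanning wedges $w_1\wedge\dots\wedge w_\ell$ with $w_t\in(E^{\vee})^{i_t}_\rho$ and $\sum_t i_t\ge c$; there $w_t(\gamma)\neq0$ forces $i_t\le -k$. This is precisely where the paper's inequality chain is valid. Since $\gamma_i\in E^{-\langle v_\rho,b_i\rangle}_\rho$ for every $\rho$, it gives $\iota_{\gamma_i}(V^b)\subseteq V^{b+b_i}$, and hence $d_\ell\big(H_{\mathcal{E}^{\ell}(D)}\big)\subseteq H_{\mathcal{E}^{\ell-1}(D)}$.
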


\begin{proof}
Using the antisymmetry of $\varphi_1 \wedge \dots \wedge \f_{\ell} \in \wedge^{\ell}(\mathbb{C}^s)^{\vee}$, it is sufficient to show that if $ (\varphi_1 \wedge \dots \wedge \f_{\ell}) \otimes z^{m} \in H_{\mathcal{E}^{\ell}(D)}$, then
     $$(\f_2 \wedge \dots \wedge \f_{\ell}) \otimes  z^{m + \mathbf{V}b_i + \omega_{s - \ell + 1}} \in H_{\mathcal{E}^{\ell - 1}(D)},$$
     for all $i \in [m]$ such that $\f_1(\gamma_i) \neq 0$. By \Cref{prop: maxminwedgeDual}, we have
$$D_{\mathcal{E}^{\ell}(D),1} = D_{\wedge^{\ell}\mathcal{E}^{\vee}_F,1} + D = \sum_{j = 1}^{\ell}D_{\mathcal{E}^{\vee}_F,j} + D = D - \sum_{j = 1}^{\ell}D_{\mathcal{E}_F,s - j + 1} = D - \sum_{j = s - \ell + 1}^s D_{\mathcal{E}_F,j}$$ implying that $m = \mathbf{V}b  + a  - \lambda_{\ell} \in \mathbb{Z}^{\Sigma(1)}$ for some $b \in \mathbb{Z}^n$. Therefore,
$$m + \mathbf{V}b_i + \omega_{s - \ell + 1} = \mathbf{V}(b + b_i) + a  - \lambda_{\ell - 1}$$
which implies that $z^{m + \mathbf{V}b_i + \omega_{s - \ell + 1}}$ has degree $D_{\mathcal{E}^{\ell - 1}(D)}$. 

Let $\EFilt$ (resp. $\EFiltl$) be the Klyachko filtration of $\mathcal{E}_F$ (resp. $\mathcal{E}^{\ell}(D)$). By \Cref{def: HE}, we have that if $(\varphi_1 \wedge \dots \wedge \f_{\ell}) \otimes z^m \in H_{\mathcal{E}^{\ell}(D)}$, then
\begin{equation}
\label{eq: dual_e}
\varphi_1 \wedge \dots \wedge \f_{\ell} \in \bigcap_{\rho \in \Sigma(1)}E(\ell)^{-\langle v_{\rho}, b \rangle }_{\rho}.
\end{equation}
Using the description of the filtration of $\EFiltl$ in \eqref{eq: wedgeFiltration} and \eqref{eq: dualFiltration}, \eqref{eq: dual_e} implies that for each $\rho \in \Sigma(1)$, there exist $i_1,\dots,i_{\ell} \in \mathbb{Z}$ such that
$$\f_j \in (E^{\vee})_{\rho}^{i_j}, \: \forall j \in [\ell], \quad \sum_{j = 1}^{\ell}i_j \geq -\langle v_{\rho},u \rangle - a_{\rho} , \quad \forall \rho \in \Sigma(1).$$
If $\varphi_1(\gamma_i) \neq 0$ for some $i \in [m]$, then
$$\gamma_i \notin E_{\rho}^{1-i_1} \quad \forall \rho \in \Sigma(1).$$
By the definition of $\EFilt$ in \Cref{def: verticalVectorBundle}, this implies that $\langle v_{\rho}, b_i\rangle >  i_1 - 1$ for all $\rho \in \Sigma(1)$. Therefore,
$$-\langle v_{\rho},b + b_i \rangle - a_{\rho} \leq \sum_{j = 1}^{\ell}i_j -\langle v_{\rho}, b_i \rangle \leq \sum_{j = 2}^{\ell}i_j.$$
This implies that $\f_2 \wedge \dots \wedge \f_{\ell} \in \bigcap_{\rho \in \Sigma(1)}E(\ell - 1)_{\rho}^{-\langle v_{\rho}, b + b_i \rangle }$, and so
$$\f_2\wedge \dots  \wedge  \f_{\ell} \otimes z^{m + \mathbf{V}b_i + \omega_{s - \ell + 1}} \in H_{\mathcal{E}^{\ell - 1}(D)},$$
proving the result.
\end{proof}

In the particular case where $\ell = 1$, \Cref{lemma: mainHomogenization} implies that if $\f \otimes z^m \in H_{\mathcal{E}^{1}(D)}$, then
$$
    z^m \f(\widetilde{F}_u) = \sum_{i = 1}^m \f(\gamma_i)u_i z^{m + \mathbf{V}(b_i) + \omega_s} \in H^0(X_{\Sigma},\mathcal{O}(D)) \cong R_{\alpha},
$$
for $\alpha = [D] \in \Cl(X_{\Sigma})$. This motivates the following definition.

\begin{definition}
\label[definition]{def: idealDF}
    Let $F = C(u \star x^B)$ be a vertical system and let $(X_{\Sigma},\mathcal{E}_F)$ be a simplicial vertical pair associated to $F$. Consider a pair $(D_F,\IF)$ satisfying that
    \begin{enumerate}[label = (\roman*)]
        \item $D_F \in \Div(X_{\Sigma})$ is nef and all divisors in the poset associated to $\mathcal{E}_F^{\vee}(D_F)$ are nef.
        \item $\IF \subset A \otimes R$ is the homogeneous ideal
        \begin{equation}
        \label{def: ideal} I(F) \coloneqq \langle  z^m\f(\widetilde{F}) : \: \f \otimes z^{m} \in H_{\mathcal{E}_F^{\vee}(D_F)} \rangle \subset A \otimes R.
        \end{equation}
    \end{enumerate}
    For each value of the parameters $u \in \mathbb{C}^m$, we denote by $\IFk \subset R$ the homogeneous ideal after specializing the parameters. The existence of nef Cartier divisors $D_F$ such that all divisors in the poset associated to $\mathcal{E}_F^{\vee}(D_F)$ are nef is guaranteed by \Cref{lem: neflyDecoratedExists}.
\end{definition}

For the following theorem, we fix a cone $\sigma \in \Sigma(n)$ and assume that the vertex corresponding to $\sigma$ in $\Delta_F$ equals the sum of the first $s$ columns in $B$, i.e. $\sum_{i = 1}^sb_i$. We let $\gamma_1,\dots,\gamma_s \in \mathbb{C}^s$ be the corresponding columns of $C$, which form a basis and $\f_1,\dots,\f_s \in (\mathbb{C}^s)^{\vee}$ a dual basis, i.e. $\f_i(\gamma_j) = \delta_{ij}$ for all $i,j = 1,\dots,s$.

With this, we show that for all $u \in \mathbb{C}^m$ the ideal $\IFk_{\sigma} \subset R_{\sigma}$ (after localizing in the open subset $U_{\sigma}$) is generated by the polynomials
\begin{equation}
\label{eq: Fisigma}
    F_i^{\sigma} \coloneqq z^{-\mathbf{V}(b_i) - \omega_{s}} \f_i (\widetilde{F}_u) = \sum_{j = 1}^m \f_i(\gamma_j) u_j z^{\mathbf{V}(b_j - b_i)} \in R_{\sigma}, \quad i \in [s]
\end{equation}
Note that these polynomials are of the form
\begin{equation}
\label{equationFG}
F_i^{\sigma} = u_i + \sum_{j \notin [s]}\f_i(\gamma_j)u_jz^{\mathbf{V}(b_j - b_i)}, \quad i \in [s].
\end{equation}

\begin{theorem}
\label[theorem]{thm: Torus}
Let $F = C(u \star x^B)$ be a  vertical system and let $(X_{\Sigma},\mathcal{E}_F)$ be a simplicial vertical pair associated to it.
\begin{enumerate}[label = (\roman*)]
\item For all $u \in \mathbb{C}^m$, the $\Tn$-schemes defined by $\IFk$ and $F_u$ are isomorphic.
\item For each $\sigma \in \Sigma(n)$, the ideal $\IFk_{\sigma} \subset R_{\sigma}$ is generated by the polynomials in \eqref{eq: Fisigma}.
\end{enumerate}
\end{theorem}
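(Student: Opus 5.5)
I would prove (ii) first and then deduce (i) from it, working on a fixed affine chart $U_{\sigma}$, $\sigma \in \Sigma(n)$. As in the setup before the statement, order the columns so that the vertex of $\Delta_F$ attached to $\sigma$ is $\sum_{i=1}^s b_i$. Let $\f_1,\dots,\f_s$ be the dual basis of $\gamma_1,\dots,\gamma_s$. By \Cref{lemm: MinkowskiReason}, every $E_{\rho}^i$ with $\rho \in \sigma(1)$ is spanned by a subset of $\{\gamma_1,\dots,\gamma_s\}$, with jumps at $-\langle v_\rho,b_i\rangle$. Dually, by \eqref{eq: dualFiltration}, $(E^\vee)^i_\rho$ is spanned by the $\f_j$ with $\langle v_\rho,b_j\rangle\geq i$, so that $D_{\mathcal{E}_F^\vee}(\f_i)$ restricted to $\sigma(1)$ has coefficients $\langle v_\rho,b_i\rangle$. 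In other words, on $U_\sigma$ the bundle $\mathcal{E}_F^\vee(D_F)$ splits as $\bigoplus_i \mathcal{L}_{-b_i}\otimes\mathcal{O}(D_F)$, compatibly with the trivialization \eqref{eq: trivialization}.

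\textbf{Inclusion $\IFk_\sigma \subseteq \langle F_1^\sigma,\dots,F_s^\sigma\rangle$.} Take a generator $z^m\f(\widetilde F_u)$ with $\f\otimes z^m\in H_{\mathcal{E}_F^\vee(D_F)}$ and write $\f=\sum_i c_i\f_i$. Then
\[
z^m\f(\widetilde F_u)=\sum_i c_i\, z^{m+\mathbf V b_i+\omega_s}F_i^\sigma .
\]
It remains to show that each $c_i z^{m+\mathbf V b_i+\omega_s}$, divided by a suitable power of $z^{\widehat\sigma}$, is a regular degree-zero function on $U_\sigma$. Equivalently, whenever $c_i\neq0$ the exponent $m+\mathbf V b_i+\omega_s$ must be nonnegative on the rays of $\sigma$. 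I would argue this through the filtration condition for $\f\in V^b$, namely $\f\in(E^\vee)_\rho^{-\langle v_\rho,b\rangle-a_\rho}$. Since each $(E^\vee)^i_\rho$ with $\rho\in\sigma(1)$ is spanned by a subset of the $\f_j$, this condition passes to each individual $\f_i$ with $c_i\ne0$, and the required inequality follows as in the proof of \Cref{lemma: mainHomogenization}. The calculation there, with $\ell=1$ and restricted to $\rho\in\sigma(1)$, gives exactly this nonnegativity.

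\textbf{Inclusion $F_i^\sigma\in\IFk_\sigma$.} Here I need a global section of $\mathcal{E}_F^\vee(D_F)$ of the form $\f_i\otimes z^{m}$ whose exponent vanishes on $\sigma(1)$ after shifting by $\mathbf V b_i+\omega_s$. By hypothesis the divisor $D_{\mathcal{E}_F^\vee(D_F)}(\f_i)$ lies in the poset and is nef. It is $\mathbb{Q}$-Cartier, and on $\sigma$ its Cartier datum is integral: the coefficients are $\langle v_\rho,b_i\rangle+a_\rho$ and $D_F$ is Cartier. So the vertex $m_\sigma$ of its polytope, given by \Cref{prop: criterionNef}(iii), is a lattice point. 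The corresponding element $\f_i\otimes z^{\mathbf V m_\sigma+\cdots}$ lies in $H$ by \Cref{thm: globalSections}, and its monomial has exponent zero on $\sigma(1)$. Hence its image in $R_\sigma$ is a unit times $F_i^\sigma$.

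I expect the main obstacle to be this step. One has to check carefully that nefness, together with the compatibility of the dual filtration with the basis $\{\f_j\}$, yields an \emph{integral} vertex exactly at the right place. When $X_\Sigma$ is only simplicial, I would use \Cref{lemm: MinkowskiReason}(i) to see that the local datum $-b_i$ (shifted by the Cartier datum of $D_F$) is integral.

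\textbf{Part (i).} This now follows by restricting to the torus $\Tn\subset U_\sigma$, where every $z_\rho$ is invertible. By (ii), $\IFk$ restricted to $\Tn$ is generated by the $F_i^\sigma$, which are the $\f_i(\widetilde F_u)$ up to units. Since the $\f_i$ form a basis of $(\mathbb{C}^s)^\vee$, these generate the same ideal as the coordinates of $\widetilde F_u$. \Cref{lemmaSectionsTorus} then identifies that scheme with $V_{\Tn}(F_u)$.
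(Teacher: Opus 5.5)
Your proposal is correct and is essentially the paper's proof. The key step is the same: nefness of $D_{\mathcal{E}_F^{\vee}(D_F)}(\f_i)$, together with the integral local datum $b_\sigma-b_i$ from \Cref{lemm: MinkowskiReason}, gives $\f_i\otimes z^{\mathbf V(b_\sigma-b_i)+a-\omega_s}\in H_{\mathcal{E}_F^{\vee}(D_F)}$. The reverse inclusion via $\f=\sum_i c_i\f_i$ also matches the paper, and the only difference is that you deduce (i) from (ii) rather than checking it directly on $\Tn$.

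One small correction: to land in degree zero you should divide by the unit monomial $z^{\mathbf V b_\sigma+a}$, which exists because $D_F$ is Cartier. A power of $z^{\widehat\sigma}$ does not work in general, since its degree need not match $[D_F]$.
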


\begin{proof}

As $D_F$ is a nef divisor, we let $D_F = \sum_{\rho \in \Sigma(1)}a_{\rho}D_{\rho}$ and $b_{\sigma} \in \mathbb{Z}^n$ satisfying
\begin{equation}
\label{eq: sigmamonomials}
\langle v_{\rho},b_{\sigma} \rangle = -a_{\rho}, \quad \forall \rho \in \sigma(1).
\end{equation}
We also denote as $\EFilt$ the Klyachko filtration of $\mathcal{E}_F$, while $\EFiltw$ is the one of $\mathcal{E}^{\vee}_F(D_F)$. By \Cref{lemm: MinkowskiReason}, the vector spaces $E_{\rho}^{i}$ (respectively, $\widetilde{E}_{\rho}^i$) for $\rho \in \sigma(1)$ are spanned by a subset of $\{\gamma_1,\dots,\gamma_s\}$ (respectively, $\{\f_1,\dots,\f_s\}$).

Firstly, we claim that the monomials $$z^{m_1},\dots,z^{m_s} \in R, \quad m_j = \mathbf{V}(b_{\sigma} - b_i) + a - \omega_s$$ satisfy that $\f_j \otimes z^{m_j} \in H_{\mathcal{E}_F^{\vee}(D_F)}$ for all $j \in \{1,\dots,s\}$. Using \Cref{lemm: MinkowskiReason} and \eqref{eq: sigmamonomials}, we see that for all $\rho \in \sigma(1)$,
\begin{multline}
\label{eq: rhoi}
\langle v_{\rho}, b_{\sigma} - b_j \rangle = -a_{\rho} + \max(i \in \mathbb{Z} : \: \gamma_j \in E_{\rho}^i) = \\ -a_{\rho} - \max(i \in \mathbb{Z} : \: \f_j \in (E_{\rho}^{1-i})^{\perp}) = -\max(i \in \mathbb{Z} : \: \f_j \in \widetilde{E}_{\rho}^i)
\end{multline}
As $X_{\Sigma}$ is simplicial, all Weil divisors are $\mathbb{Q}$-Cartier. The assumption in \cref{def: idealDF} implies that the divisors $D_{\mathcal{E}_F^{\vee}(D_F)}(\f_j)$ are nef for all $j \in \{1,\dots,s\}$. Therefore, \eqref{eq: rhoi} implies that $b_{\sigma} - b_j$ lies in the polytope $\Delta_{\mathcal{E}_F^{\vee}(D_F)}(\f_j)$. In particular,
$$\langle v_{\rho}, b_{\sigma} - b_j \rangle \geq -\max(i \in \mathbb{Z} : \: \f_j \in \widetilde{E}_{\rho}^i), \quad \forall \rho \in \Sigma(1) $$
implying $\f_j \in V^{b_{\sigma} - b_j}$ for all $j \in \{1,\dots,s\}$, proving the claim.

Next, we prove the two parts of the theorem. To prove (i), we consider $z \in \Tn$ and show that $\widetilde{F}(z) = 0$, if and only if $\f \otimes z^m(\widetilde{F}(z)) = 0$ for all $\f \otimes z^m \in H_{\mathcal{E}_F^{\vee}(D_F)}$. If $\widetilde{F}(z) = \mathbf{0} \in \mathbb{C}^s$ and $z \in \Tn$, then $z^m\widetilde{F}(z) = \mathbf{0}$ for all exponents $m \in \mathbb{Z}^{\Sigma(1)}$. This implies that $\f \otimes z^m(\widetilde{F})(z) = 0$ for all $\f \in (\mathbb{C}^s)^{\vee}$. Conversely, if $z^{m_j} \otimes \f_j(\widetilde{F})(z) = 0$ for all $j = 1,\dots,s$ as $\f_1,\dots,\f_s$ form a basis of $(\C^s)^{\vee}$, we deduce that $\widetilde{F}(z) = \mathbf{0}$, implying part (i).

We proceed with part (ii). By \eqref{eq: sigmamonomials}, the monomials $z^{\mathbf{V}(b_{\sigma}) + a}$ are invertible in $R_{\sigma}$. Therefore, it is sufficient to show that $\IFk _{\sigma}$ is generated by the polynomials:
\begin{equation}
\label{eq: basisIsigbar}\widetilde{F}_j^{\sigma}\coloneqq  z^{\mathbf{V}(b_{\sigma} - b_j) + a - \omega_s}\f_j(\widetilde{F}), \quad j \in \{1,\dots,s \}.
\end{equation}
For every element $\f \otimes z^{\mathbf{V}b + a - \omega_s} \in H_{\mathcal{E}_F^{\vee}(D_F)}$ for some $b \in \mathbb{Z}^n$, we consider the decomposition
    $$\varphi = \sum_{i = 1}^s\alpha_i\f_i, \quad \alpha_i \in \mathbb{C}. $$
As there exists a basis of $\widetilde{E}_{\rho}^i$ for $\rho \in \sigma(1)$ given by a subset of $\{\f_1,\dots,\f_s\}$, we have that
$$
        -\langle v_{\rho},b \rangle \leq \max(i \in \mathbb{Z} : \: \f \in \widetilde{E}_{\rho}^i) =  \min_{\alpha_j \neq 0} \, \max(i \in \mathbb{Z} : \: \f_j \in \widetilde{E}_{\rho}^i) = \min_{\alpha_j \neq 0} -\langle v_{\rho}, b_{\sigma} - b_j \rangle \quad \forall \rho \in \sigma(1).
$$
Therefore, if $\alpha_j \neq 0$, we have $-\langle v_{\rho},b \rangle \leq -\langle v_{\rho},b_{\sigma} - b_j \rangle$ for all $\rho \in \sigma(1)$ and so
$$z^{\mathbf{V}(b+b_j - b_{\sigma})} =\prod_{\rho \in \Sigma(1)}z_{\rho}^{\langle v_{\rho},b + b_i - b_{\sigma} \rangle} \in R_{\sigma}$$
for all $j \in \{1,\dots,s\}$ with $\alpha_j \neq 0$. This implies that
$$\f \otimes z^{\mathbf{V}b + a - \omega_s}(\widetilde{F}) = \sum_{\alpha_j \neq 0}\alpha_j\f_j \otimes z^{\mathbf{V}b + a - \omega_s}(\widetilde{F}) = \sum_{\alpha_j \neq 0}\big(\alpha_jz^{\mathbf{V}(b + b_i - b_{\sigma})}\big)\widetilde{F}_j^{\sigma},$$
which is a combination in $R_\sigma$. Therefore, $\IFk_{\sigma}$ is generated by the polynomials in \eqref{eq: basisIsigbar}.
\end{proof}

Generalizing the case of sparse polynomial systems, when $s = n$ the ideal $I(F)$ has no solutions at infinity for generic values of the parameters.

\begin{theorem}
\label[theorem]{thm: MainNoSolutionsInfinity}
Let $F = C(u \star x^B)$ be a vertical system and let  $(X_{\Sigma},\mathcal{E}_F)$ be a simplicial vertical pair associated to it. In the case where $s = n$, we also have that for generic values of $u \in \C^m$, $V(\IFk) \cap V(z_{\rho}) = \varnothing$.
\end{theorem}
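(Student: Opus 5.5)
We argue chart by chart and then use an incidence-variety dimension count, as in the classical proof of Bernstein's second theorem. Since $X_{\Sigma}=\bigcup_{\sigma\in\Sigma(n)}U_\sigma$ and $\Sigma(n)$ is finite, it suffices to show the following. For each fixed $\sigma\in\Sigma(n)$ and each ray $\rho\in\sigma(1)$, there is a nonempty Zariski open set $U_{\sigma,\rho}\subset\C^m$ such that $V(\IFk)\cap V(z_\rho)\cap U_\sigma=\varnothing$ for all $u\in U_{\sigma,\rho}$. We then take $U$ to be the (finite) intersection of these sets. Rays not in $\sigma(1)$ need not be treated, because $z_\rho$ is invertible on $U_\sigma$.

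Fix $\sigma$ and order the columns so that the vertex of $\Delta_F$ for $\sigma$ is $\sum_{i=1}^s b_i$. By \Cref{thm: Torus}(ii), $\IFk_\sigma$ is generated by the $s=n$ polynomials $F_i^\sigma$ of \eqref{equationFG}, which are affine-linear in $u$:
\[
F_i^\sigma=u_i+\sum_{j\notin[s]}\f_i(\gamma_j)u_jz^{\mathbf{V}(b_j-b_i)}.
\]
Let $W=V(z_\rho)\cap U_\sigma$. By simplicial-ness, $U_\sigma$ is an affine variety of dimension $n$ (a finite quotient of $\C^n$), so $W$ has dimension $n-1$. Consider the incidence variety
\[
Z=\{(u,p)\in\C^m\times W:\ F_i^\sigma(u,p)=0\ \text{for } i=1,\dots,n\}.
\]
For each fixed $p\in W$, the equations are $n$ linear equations in $u$, and each is solved for the distinct variable $u_i$ with coefficient $1$. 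They are therefore independent, and the fiber of $Z\to W$ over $p$ is an affine subspace of codimension exactly $n$ in $\C^m$.

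It follows that $\dim Z\le (n-1)+(m-n)=m-1$. The projection of $Z$ to $\C^m$ is therefore not dominant, and its closure is a proper closed subset. We take $U_{\sigma,\rho}$ to be its complement.

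The main technical point is making sense of $F_i^\sigma$ restricted to $W$. We must check that the monomials $z^{\mathbf{V}(b_j-b_i)}$ are regular on $U_\sigma$, so that they can be evaluated on the boundary. This follows from \Cref{lemm: MinkowskiReason} together with the nefness built into \Cref{def: idealDF}, exactly as in the proof of \Cref{thm: Torus}. We must also pass correctly from the chart description in the Cox ring to the quotient $U_\sigma$ when $\sigma$ is merely simplicial.

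To handle the simplicial case, we would work on the finite cover $\C^n\to U_\sigma$ given by $\C[\sigma^\vee\cap M']$ for the sublattice $M'$ generated by the dual basis. The dimension count is unchanged under this finite map, and surjectivity of the cover transfers emptiness of the preimage to $W$. Everything else is the routine fiber-dimension theorem.
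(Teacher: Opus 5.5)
Your proposal is correct and follows essentially the same route as the paper. You reduce to a chart $U_\sigma$ and a ray $\rho\in\sigma(1)$, use the generators $F_i^\sigma=u_i+\sum_{j\notin[s]}\f_i(\gamma_j)u_jz^{\mathbf{V}(b_j-b_i)}$ from \Cref{thm: Torus}(ii), and show by a dimension count that the bad parameters lie in the closure of the image of an $(m-1)$-dimensional variety. The paper writes your incidence variety $Z\cong W\times\C^{m-n}$ directly as the map $\Phi$ obtained by solving for $u_1,\dots,u_n$.

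Your finite-cover detour for simplicial $\sigma$ is unnecessary, since $R_\sigma\cong\C[U_\sigma]$ for every cone and $W$ is already an affine variety of dimension $n-1$. Also, the $M'$ you describe is an overlattice of $M$, not a sublattice.
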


\begin{proof}
It suffices to show that for all $\rho \in \Sigma(1)$ and $\sigma \in \Sigma(n)$, there exists a family of nonempty open subsets $V_{\rho,\sigma} \subset \mathbb{C}^m$ such that $V(\IFk) \cap V(z_{\rho}) \cap U_{\sigma} = \varnothing$ for $u \in V_{\rho,\sigma}$. As before, we assume that the vertex of $\Delta_F$ corresponding to $\sigma$ is given by the first $s$ columns of $B$.

If $\rho \notin \sigma(1)$, we always have $V(z_{\rho}) \cap U_{\sigma} = \varnothing$ and take $V_{\rho,\sigma} = \mathbb{C}^m$. If $\rho \in \sigma(1)$, part (ii) in \Cref{thm: Torus} implies that $\IFk_{\sigma} + \langle z_{\rho} \rangle$ is generated by the polynomials \eqref{equationFG} after setting the variable $z_{\rho}$ to zero, which are of the form
$$ u_i + \widehat{G}_i(u,z)$$
where $\widehat{G}_i \in R_{\sigma}$ does not depend on the parameters $u_i$ for $i = 1,\dots,s$. Consider the map
\[
\begin{aligned}
\Phi: (U_{\sigma} \cap V(z_{\rho})) \times \mathbb{C}^{m - n} &\to \mathbb{C}^m, \\
\bigl( (z_{\rho'})_{\rho' \in \sigma(1) \setminus \{\rho\}}, u_{s+1},\dots,u_m \bigr) &\mapsto
\bigl( -\widehat{G}_1(u,z),\dots,-\widehat{G}_n(u,z),u_{s+1},\dots,u_m \bigr).
\end{aligned}
\]
The set $V_{\rho,\sigma} \coloneqq \mathbb{C}^m \setminus \overline{\text{Im} \, \Phi}$ is open. Moreover, as $U_{\sigma} \cap V(z_{\rho})$ is an affine variety of dimension $n - 1$, the source has dimension $(n-1)+(m-n)=m-1<m$, so $\overline{\operatorname{Im}\Phi} \neq \mathbb{C}^m$ and hence $V_{\rho,\sigma} \neq \varnothing$. As $\overline{\text{Im} \, \Phi}$ contains the set of parameters for which $V(\IFk) \cap V(z_{\rho}) \neq \varnothing$ in $U_{\sigma}$, we deduce the theorem.
\end{proof}

\section{The generic number of zeros of vertical systems}
\label{sec: KoszulChernZeros}

In the previous section, we have constructed a family of ideals $\IFk$ with the same zeros as the vertical family $F = C(u \star x^B)$ over $\Tn$ and proved that these ideals give overdetermined systems in the faces of the toric variety. In this section, we compute the generic number of zeros of square vertical systems with the following strategy: we first show that if $V(\IFk)$ is finite, the number of zeros of $V(\IFk)$ (counted with multiplicity) in $X_{\Sigma}$ is given by the top Chern class of $\mathcal{E}_F$. Then, we use \Cref{thm: MainNoSolutionsInfinity} to deduce that this equals the generic root count of $F$ in $\Tn$. By exploiting the resolutions in \Cref{thm: resolutions}, we prove \cref{thm: RootCountFinalFormula} and \cref{thm: splitRootCount}, which give formulas for this generic root count.

\subsection{The Koszul complex}

As a first step, we make use of the Koszul complex over sections of the toric vector bundle $\mathcal{E}_F$, showing that the differentials can be described by the polynomials $\widetilde{F}^{\ell}$ defined in \eqref{eq: widetilde}.
Namely, we consider the Koszul complex over the toric vector bundle $\mathcal{E}_F$, i.e.
$$K^{\bullet}(\mathcal{E}_F): \big( 0 \xrightarrow[]{} \wedge^s\mathcal{E}_F^{\vee} \xrightarrow[]{d_s} \wedge^{s-1}\mathcal{E}_F^{\vee} \xrightarrow[]{d_{s-1}} \dots \xrightarrow[]{} \wedge^2\mathcal{E}_F^{\vee} \xrightarrow[]{d_2} \mathcal{E}_F^{\vee} \xrightarrow[]{d_1} \mathcal{O} \big)$$
with differentials defined as
$$ d_{\ell}( (\f_1 \wedge \dots \wedge \f_{\ell}) \otimes z^m) = \sum_{i = 1}^{\ell}(-1)^i z^m\f_i(\widetilde{F}^{(s-\ell + 1)}) \otimes (\f_1 \wedge \dots \wedge \widehat{\f}_i \wedge \dots \wedge \f_{\ell})$$
for each section $(\f_1 \wedge \dots \wedge \f_{\ell}) \otimes z^m$ of $\wedge^{\ell}\mathcal{E}_F^{\vee}$ with $z^{m} \in R^{\pm}$. Using \Cref{prop: maxminwedgeDual}, we deduce that the degree of the map $d_{\ell}$ equals
$$D_{\wedge^{\ell-1}\mathcal{E}_F^{\vee},1} - D_{\wedge^{\ell}\mathcal{E}_F^{\vee},1} = \sum_{j = 1}^{\ell-1}D_{\mathcal{E}_F^{\vee},j} - \sum_{j = 1}^{\ell}D_{\mathcal{E}_F^{\vee},j} = -D_{\mathcal{E}_F^{\vee},\ell} = D_{F,s-\ell + 1},$$
which matches the degree of $\widetilde{F}^{s - \ell + 1}$. We show that, in the open cover $U_{\sigma}$, this complex is isomorphic to the Koszul complex over the generators of $\IF$ in \eqref{eq: Fisigma}.

\begin{proposition}
\label[proposition]{thm: complexIsWellDefined}
Let $F = C(u \star x^B)$ be a vertical system and let $(X_{\Sigma},\mathcal{E}_F)$ be a simplicial vertical pair associated to $F$. Then,
\begin{enumerate}[label = (\roman*)]
    \item The complex $K^{\bullet}(\mathcal{E}_F)$ is well-defined.
    \item For each $\sigma \in \Sigma(n)$, the complex $K^{\bullet}(\mathcal{E}_F)_{\mid U_{\sigma}}$ is isomorphic to the Koszul complex over the generators of $\IF_{\sigma}$ in \eqref{eq: Fisigma}.
\end{enumerate}

\end{proposition}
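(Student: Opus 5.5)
For part (i), I would check two things: that each $d_\ell$ sends sections of $\wedge^{\ell}\mathcal{E}_F^{\vee}$ to sections of $\wedge^{\ell-1}\mathcal{E}_F^{\vee}$, and that $d_{\ell-1}\circ d_\ell = 0$.

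\textbf{Well-definedness of each $d_\ell$.} Following \Cref{rk: nonGlobalSections}, it suffices to work with the spaces $H_{\mathcal{E}^{\ell}(D)}$ for an arbitrary divisor $D$. Local sections over $U_\sigma$ are quotients of elements of $H_{\mathcal{E}^{\ell}(D)}$ by monomials of degree $D$ that do not vanish on $U_\sigma$. \Cref{lemma: mainHomogenization} applies to exactly these spaces: it says each summand $(\f_1\wedge\dots\wedge\widehat{\f_j}\wedge\dots\wedge\f_\ell)\otimes z^m\f_j(\widetilde{F}^{s-\ell+1})$ lies in $H_{\mathcal{E}^{\ell-1}(D)}$. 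Hence $d_\ell$ maps $H^0(X_\Sigma,\wedge^\ell\mathcal{E}_F^\vee(D))$ into $H^0(X_\Sigma,\wedge^{\ell-1}\mathcal{E}_F^\vee(D))$ for every $D$. The degree computation preceding the statement shows that $d_\ell$ is homogeneous of the correct degree $D_{F,s-\ell+1}$, so it is compatible with dividing by monomials of degree $D$. It therefore defines a sheaf morphism $\wedge^\ell\mathcal{E}_F^\vee\to\wedge^{\ell-1}\mathcal{E}_F^\vee$, and this morphism is $R$-linear, hence $\mathcal{O}$-linear.

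\textbf{The relation $d_{\ell-1}\circ d_\ell=0$.} All $\widetilde{F}^{k}$ differ from $\widetilde{F}$ only by the monomial factors $z^{\omega_k-\omega_s}$, which are independent of $i$. So $d_{\ell-1}d_\ell$ expands to the usual Koszul double sum, multiplied by the common factor $z^{\omega_{s-\ell+1}+\omega_{s-\ell+2}}$. The terms cancel in pairs by the standard sign argument.

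For part (ii), I would fix $\sigma$ and order the columns so that the vertex of $\Delta_F$ corresponding to $\sigma$ is $\sum_{i\le s}b_i$, with $\{\f_i\}$ the dual basis of $\{\gamma_i\}$, as before \Cref{thm: Torus}. By \eqref{eq: trivialization}, $\mathcal{E}_F^\vee|_{U_\sigma}\cong\bigoplus_i\mathcal{L}_{-b_i}$. The frame I would use is the local sections $\varepsilon_i := \f_i\otimes z^{\mathbf{V}b_i+\omega_s}$, suitably normalized. By \Cref{lemm: MinkowskiReason}(i) and the formula for $\omega$ on rays of $\sigma$, these have no zeros or poles on $U_\sigma$: their $z_\rho$-exponents vanish for $\rho\in\sigma(1)$ after shifting by the Cartier data $m_\sigma$. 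Consequently $\wedge^\ell\mathcal{E}_F^\vee|_{U_\sigma}$ is free on the wedge products $\varepsilon_{i_1}\wedge\dots\wedge\varepsilon_{i_\ell}$, with the twists $\omega_k$ absorbed into the normalization. Evaluating $d_\ell$ on this basis, each contraction $\varepsilon_i\mapsto z^{\mathbf{V}b_i+\omega_{s-\ell+1}}\f_i(\widetilde{F})$ becomes, after dividing by the appropriate invertible monomials, exactly $F_i^\sigma$ from \eqref{eq: Fisigma}. So $d_\ell$ on $U_\sigma$ is the Koszul differential on $(F_1^\sigma,\dots,F_s^\sigma)$, which by \Cref{thm: Torus}(ii) generate $\IF_\sigma$.

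The main obstacle is this normalization step: choosing the frame so that the monomial factors attached to the different $\omega_k$ in consecutive degrees are all units in $R_\sigma$ and cancel consistently. I would handle it using the identity $D_{\wedge^\ell\mathcal{E}^\vee_F,1}=-\sum_{j>s-\ell}D_{F,j}$ from \Cref{prop: maxminwedgeDual}, together with the fact that on $\sigma$ the jump coefficients $\omega_{\rho,k}$ are given by the ordered values $-\langle v_\rho,b_i\rangle$ (see \eqref{eq: maxinequality}). Combining these should make the rescaled basis $\varepsilon_I$ consist of unit multiples of the trivializing sections.
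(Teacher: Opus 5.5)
\textbf{Part (i).} This is the paper's argument: \Cref{rk: nonGlobalSections} together with \Cref{lemma: mainHomogenization}. Your extra check that $d_{\ell-1}\circ d_\ell=0$, using $\widetilde F^{k}=z^{\omega_k-\omega_s}\widetilde F$ and the usual cancellation, is correct. It makes explicit something the paper leaves implicit.

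\textbf{Part (ii).} Your plan matches the paper's: trivialize over $U_\sigma$ and compare $d_\ell$ with the Koszul differential on $F_1^\sigma,\dots,F_s^\sigma$. However, the argument has a gap exactly at the step you call the main obstacle, and the frame you write down is wrong.
\begin{itemize}
\item By \Cref{prop: maxminwedgeDual}, $D_{\wedge^\ell\mathcal{E}_F^\vee,1}=-\sum_{j=1}^{\ell}D_{F,s-j+1}$, so sections of $\wedge^\ell\mathcal{E}_F^\vee$ live in degree $-[\lambda_\ell]$. In particular, sections of $\mathcal{E}_F^\vee$ have degree $-[D_{F,s}]$, but your $\varepsilon_i=\f_i\otimes z^{\mathbf{V}b_i+\omega_s}$ has degree $+[D_{F,s}]$.
\item Likewise, your contraction $z^{\mathbf{V}b_i+\omega_{s-\ell+1}}\f_i(\widetilde F)$ has degree $[D_{F,s}]+[D_{F,s-\ell+1}]$, while $F_i^\sigma$ has degree $0$. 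Units of $R_\sigma$ have degree $0$, so no division by them relates the two.
\item The proposed reason the frame generates is also off. For a correct frame element, the $z_\rho$-exponent at $\rho\in\sigma(1)$ is nonnegative but generally nonzero; for $\ell=1$ it is $\max_{i\in[s]}\langle v_\rho,b_i\rangle-\langle v_\rho,b_j\rangle$. Forcing it to vanish by multiplying with some $z^{\mathbf{V}m}$ would multiply by a non-unit of $R_\sigma$, which destroys the generating property rather than proving it.
\end{itemize}

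\textbf{The missing step.} For each $\ell$ separately, take the frame $\f_J\otimes z^{-\mathbf{V}b_J-\lambda_\ell}$, $J\in[s]^\ell$; this is $\psi_\ell^{-1}(\f_J\otimes 1)$ in the paper.
\begin{itemize}
\item It generates because, by \Cref{lemm: MinkowskiReason}, for $\rho\in\sigma(1)$ the filtration of $\wedge^\ell\mathcal{E}_F^\vee$ is split in the basis $\{\f_J\}$, with $\max\{k:\f_J\in(\wedge^\ell E^\vee)^k_\rho\}=\langle v_\rho,b_J\rangle$. Hence $\f_J\otimes z^{\mathbf{V}b-\lambda_\ell}$ is a section over $U_\sigma$ if and only if $z^{\mathbf{V}(b+b_J)}\in R_\sigma$.
\item With this frame nothing needs normalizing, and the frames for different $\ell$ need not be wedge products of the $\ell=1$ frame. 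Since $\lambda_\ell-\omega_{s-\ell+1}=\lambda_{\ell-1}$,
\[
z^{-\mathbf{V}b_J-\lambda_\ell}\,\f_j(\widetilde F^{s-\ell+1})=\Big(\sum_{k=1}^m\f_j(\gamma_k)u_k z^{\mathbf{V}(b_k-b_j)}\Big)z^{-\mathbf{V}b_{J\setminus\{j\}}-\lambda_{\ell-1}}=F_j^\sigma\, z^{-\mathbf{V}b_{J\setminus\{j\}}-\lambda_{\ell-1}}.
\]
\item So $d_\ell$ sends the frame element indexed by $J$ to $\sum_{j\in J}\pm F_j^\sigma$ times the frame element indexed by $J\setminus\{j\}$. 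This is exactly the Koszul differential, and it is the commutative-diagram computation in the paper.
\end{itemize}
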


\begin{proof}
    Using \Cref{rk: nonGlobalSections}, it suffices to prove part (i) for the global sections of a toric vector bundle $\wedge^{\ell}\mathcal{E}_F^{\vee} \otimes \mathcal{O}(D)$ for some $\Tn$-invariant divisor $D$. Thus, part (i) follows from \Cref{lemma: mainHomogenization}. For part (ii), we assume that the vertex of $\Delta_F$ corresponding to $\sigma$ is given by the first $s$ columns of $B$ and denote by $[s]^{\ell}$ the set of subsets of $[s]$ of size $\ell$. For each $J \in [s]^{\ell}$, we denote $b_J \coloneqq \sum_{j \in J}b_j$. From the isomorphism in \eqref{eq: trivialization}, we deduce that
\begin{equation}
\label{trivialization2}
\wedge^{\ell}\mathcal{E}_F^{\vee}{\mid_{U_{\sigma}}} \cong \bigoplus_{J \in [s]^{\ell}} \mathcal{L}_{-b_{J}}{\mid _{U_{\sigma}}}.
\end{equation}
Each summand corresponds to the basis element $\f_J \coloneqq \wedge_{j \in J}\f_j \in \wedge^{\ell}(\mathbb{C}^s)^{\vee}$ where $\{\f_1,\dots,\f_s\}$ is the basis in \eqref{eq: Fisigma}. As we established that the sections of $\wedge^{\ell}\mathcal{E}_F^{\vee}$ have degree $-\sum_{j = 1}^{\ell}D_{F,s-j+1}$, \eqref{trivialization2} induces an isomorphism
$$ \psi_{\ell}  :H^0(U_{\sigma},\wedge^{\ell}\mathcal{E}_F^{\vee}) \xrightarrow[]{} \bigoplus_{J \in [s]^\ell} \f_J \otimes R_{\sigma}, \quad \f_J \otimes z^m  \xrightarrow[]{} \f_J \otimes z^{m + \mathbf{V}(b_{J})+ \lambda_{\ell}}.$$
As a complex of sheaves over an affine variety is determined by its global sections, it is sufficient to show that the diagram

\[
\begin{array}{ccccccccc}
\label{diagram1}
\longrightarrow & H^0(U_{\sigma},\wedge^{\ell}\mathcal{E}_F^{\vee}) & \xrightarrow{d_{\ell}} & H^0(U_{\sigma},\wedge^{\ell-1}\mathcal{E}_F^{\vee}) & \longrightarrow &\\
 &  \downarrow{\small\psi_{\ell}} & & \downarrow{\tiny{\psi_{\ell-1}}} & & \\
 \longrightarrow & \bigoplus_{J \in [s]^\ell} \f_J \otimes R_{\sigma} & \xrightarrow[\widehat{d_{\ell}}]{} & \bigoplus_{J \in [s]^{\ell-1}} \f_J \otimes R_{\sigma} & \longrightarrow
\end{array},
\]

commutes, where the map $\widehat{d}_{\ell}$ equals the Koszul complex of the sequence $F_1^{\sigma},\dots,F_s^{\sigma}$, i.e.

\[
\widehat{d}_{\ell}(\f_J \otimes z^m ) = \sum_{j \in J}(-1)^{\iota(j)} \f_{J \setminus \{j\}} \otimes z^m F_j^{\sigma},
\]
where $\iota(j)$ indicates the position of $j$ in the order of the elements of $J$. If we consider a section $\f_{J} \otimes z^m \in H^0(U_{\sigma},\wedge^{\ell}\mathcal{E}_F^{\vee})$, where $m = \mathbf{V}(b) - \lambda_{\ell}$ for some $b \in \mathbb{Z}^n$, then
\begin{align*}
(\psi_{\ell-1})^{-1}\left(\widehat{d}_{\ell}\left(\psi_{\ell}(\f_J \otimes z^m\right)\right) =  (\psi_{\ell-1})^{-1}\left(\widehat{d}_{\ell}\left(\f_J \otimes z^{m + \mathbf{V}(b_J) + \lambda_{\ell}}\right)\right) = \\
(\psi_{\ell-1})^{-1}\left(\sum_{j \in J}(-1)^{\iota(j)} (\f_{J \setminus \{j\}} \otimes z^{\mathbf{V}(b + b_{J})} F_j^{\sigma}) \right) = \\
(\psi_{\ell-1})^{-1}\left(\sum_{j \in J}(-1)^{\iota(j)}\sum_{k = 1}^m  \f_{J \setminus \{j\}} \otimes \f_j(\gamma_k)u_kz^{\mathbf{V}(b_k + b + b_{J \setminus \{j\}})} \right) =
\end{align*}
\begin{align*}
\sum_{j \in J}(-1)^{\iota(j)}\sum_{k = 1}^m  \f_{J \setminus \{j\}} \otimes \f_j(\gamma_k)u_kz^{\mathbf{V}(b_k + b) - \lambda_{\ell - 1}}  = \\
\sum_{j \in J}(-1)^{\iota(j)} \f_{J \setminus \{j\}} \otimes z^m \f_j(\widetilde{F}^{s - \ell + 1}) =
d_{\ell}\left(\f_J \otimes z^m\right), \hspace{0.2cm}
\end{align*}
concluding that the above diagram commutes and the two complexes are isomorphic.
\end{proof}

As a consequence, if $\dim V(\IFk) = n - s$, then $K^{\bullet}(\mathcal{E}_F)$ is exact.

\begin{corollary}
\label[corollary]{corolary: exact}
Let $F = C(u \star x^B)$ be a vertical system and $(X_{\Sigma},\mathcal{E}_F)$ a simplicial vertical pair associated to $F$. If $V(\IFk)$ has pure dimension $n-s$, then $K^{\bullet}(\mathcal{E}_F)$ is exact.
\end{corollary}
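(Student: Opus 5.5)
Exactness of a complex of sheaves is local, so it suffices to check it on each affine chart $U_{\sigma}$ for $\sigma \in \Sigma(n)$; these cover $X_{\Sigma}$ by \eqref{eq:opencover}. By \Cref{thm: complexIsWellDefined}(ii), $K^{\bullet}(\mathcal{E}_F)_{\mid U_{\sigma}}$ is isomorphic to the Koszul complex of the $s$ elements $F_1^{\sigma},\dots,F_s^{\sigma} \in R_{\sigma}$ from \eqref{eq: Fisigma}, specialized at $u$. The statement therefore reduces to a commutative-algebra fact: the Koszul complex on $F_1^{\sigma},\dots,F_s^{\sigma}$ is exact (except at the end, where the homology is $R_{\sigma}/\IFk_{\sigma}$, and the complex resolves it) whenever these elements form a regular sequence.

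The key steps, in order, are as follows.

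First, by \Cref{thm: Torus}(ii), the ideal $\IFk_{\sigma}$ is generated by $F_1^{\sigma},\dots,F_s^{\sigma}$. The hypothesis that $V(\IFk)$ has pure dimension $n-s$ then gives that $V(F_1^{\sigma},\dots,F_s^{\sigma}) \subset U_{\sigma}$ is either empty or of pure codimension $s$ in the $n$-dimensional affine variety $U_{\sigma}$.

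Second, we need $R_{\sigma}$ to be Cohen--Macaulay. Since $X_{\Sigma}$ is simplicial, $U_{\sigma}$ is a quotient of affine space by a finite abelian group, hence normal and Cohen--Macaulay (Hochster's theorem for normal toric rings, \cite[Theorem 9.2.9]{coxlittleschneck}).

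Third, in a Cohen--Macaulay local ring, $s$ elements generating an ideal of height $s$ form a regular sequence. We apply this at each local ring $(R_{\sigma})_{\mathfrak p}$ at primes $\mathfrak p \supseteq \IFk_{\sigma}$; at primes not containing the ideal, some $F_i^{\sigma}$ is a unit, and the Koszul complex is split exact. We then conclude by the standard fact that the Koszul complex on a regular sequence is acyclic. Exactness of the stalks gives exactness of the sheaf complex, so $K^{\bullet}(\mathcal{E}_F)$ is exact.

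The main obstacle is the second and third steps for singular (merely simplicial) charts, where we must be sure the height statement holds in $R_{\sigma}$ and not only upstairs in the Cox coordinates. If invoking Cohen--Macaulayness of $R_{\sigma}$ is deemed too heavy, I would instead pass to the finite cover $\mathbb{C}^n \to U_{\sigma}$ given by the Cox coordinates of $\sigma(1)$. There, the pulled-back elements cut out a set of pure codimension $s$ in a polynomial ring, so they form a regular sequence. Exactness then descends, because $R_{\sigma}$ is the invariant ring of a finite group and taking invariants is exact in characteristic zero.
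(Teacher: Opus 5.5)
Your proposal is correct and follows essentially the same route as the paper. You restrict to the charts $U_{\sigma}$ and use \Cref{thm: complexIsWellDefined}(ii) and \Cref{thm: Torus}(ii) to identify the complex with the Koszul complex on $s$ generators of an ideal of height $s$; the Cohen--Macaulayness of the normal toric ring $R_{\sigma}$ then makes these generators locally a regular sequence, so the Koszul complex is acyclic. Your explicit localization at primes and the alternative descent through the finite cover $\mathbb{C}^n \to U_{\sigma}$ are valid refinements that the paper leaves implicit.
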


\begin{proof}
By \Cref{thm: Torus}, we have that if $V(\IFk) \cap U_{\sigma}$ has pure dimension $n-s$, then $(\IFk)_{\sigma}$ is a complete intersection for all maximal cones $\sigma \in \Sigma(n)$. As $U_{\sigma}$ is a normal toric variety (in particular, it is Cohen-Macaulay \cite[Theorem 9.2.9]{coxlittleschneck}), $K^{\bullet}(\mathcal{E}_F){\mid _{U_{\sigma}}}$ is exact for all $\sigma \in \Sigma(n)$.
\end{proof}

\subsection{Chern classes and the degree of $\IFk$}

In this section, we study the number of zeros of $\IFk$ in $X_{\Sigma}$ for generic values of $u \in \mathbb{C}^m$ using intersection theory \cite{Fulton1984}.

Similarly to \eqref{def: ideal}, for each $D \in \Div(X_{\Sigma})$, we may define the ideal
$$\IF_D \coloneqq \langle\f \otimes z^m(\widetilde{F}) : \: \f \otimes z^m \in H_{\mathcal{E}_F^{\vee}(D)}\rangle.$$
Before proceeding to the main theorem, we prove the following technical lemma.

\begin{lemma}
\label[lemma]{lemma: generators}
    Let $F = C(u \star x^B)$ be a vertical system and let $(X_{\Sigma},\mathcal{E}_F)$ be a vertical pair associated to it. There exists a nef divisor $D \in \Div(X_{\Sigma})$ such that for all $u \in \mathbb{C}^m$, the subschemes of $X_{\Sigma}$ defined by $\IFk$ and $\IFk_{D' + D_F}$ are isomorphic for all nef divisors $D' \geq D$.
\end{lemma}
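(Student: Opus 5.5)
The plan is to compare the two ideals chart by chart. A homogeneous ideal $J \subset R$ determines its subscheme of $X_{\Sigma}$ through the localizations $J_{\sigma} = (J_{z^{\widehat{\sigma}}})_0 \subset R_{\sigma}$ for $\sigma \in \Sigma(n)$. So it suffices to show that $\IFk_{\sigma} = (\IFk_{D'+D_F})_{\sigma}$ for every maximal cone $\sigma$. By \Cref{thm: Torus}(ii), $\IFk_{\sigma}$ is generated by the polynomials $F_1^{\sigma},\dots,F_s^{\sigma}$ of \eqref{eq: Fisigma}. The goal is therefore to choose $D$ so that the same holds for $(\IFk_{D'+D_F})_{\sigma}$ whenever $D' \geq D$ is nef.

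The first step is to reread the proof of \Cref{thm: Torus}(ii) and isolate what it uses about $D_F$. There are two ingredients. First, for each $j$ there must be a monomial section $\f_j \otimes z^{m_j} \in H_{\mathcal{E}_F^{\vee}(D_F)}$ whose monomial is invertible on $U_{\sigma}$. This used that $D_{\mathcal{E}_F^{\vee}(D_F)}(\f_j)$ is nef, so that the vertex $b_{\sigma} - b_j$ lies in the associated polytope. Second, every element of $H_{\mathcal{E}_F^{\vee}(D_F)}$ must become an $R_{\sigma}$-combination of the $\widetilde{F}^{\sigma}_j$ after localizing. This used only \Cref{lemm: MinkowskiReason}, which says that the filtrations on $\sigma(1)$ are spanned by subsets of $\{\f_1,\dots,\f_s\}$, and this property is unchanged by twisting with a divisor.

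Hence the whole argument goes through verbatim for any twist $D''$ such that $D''$ is nef and all divisors in the poset of $\mathcal{E}_F^{\vee}(D'')$ are nef. Tensoring with $\mathcal{O}(D')$ shifts the poset by $D'$, as in the proof of \Cref{lem: neflyDecoratedExists}. So for $D'' = D' + D_F$ the poset divisors are $D_{\mathcal{E}_F^{\vee}(D_F)}(e) + D'$, which are sums of nef $\mathbb{Q}$-Cartier divisors and hence nef. Likewise $D'+D_F$ is nef.

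This suggests that one can simply take $D = 0$, or any nef $D$, and conclude that both ideals localize to $\langle F_1^{\sigma},\dots,F_s^{\sigma}\rangle$ on every chart. I expect the main obstacle to be exactly this generation step. I need to check that the monomial $z^{m_j}$ built from $b_{\sigma}$ for the twisted divisor is still a genuine element of $R$ and not merely of $R^{\pm}$. That requires $D'+D_F$ to be Cartier on $U_{\sigma}$, or at least that a lattice point $b_{\sigma}$ as in \eqref{eq: sigmamonomials} exists.

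In the simplicial but non-smooth case, nef Weil divisors are only $\mathbb{Q}$-Cartier, so $b_{\sigma}$ may fail to be integral. This is where a nontrivial choice of $D$ is needed. I would take $D = kD_A$ with $D_A$ ample Cartier and $k$ divisible enough that $D' + D_F$ admits integral Cartier data on each $\sigma$. Alternatively, I would replace equality of ideals by equality of saturations, using that the $F_j^{\sigma}$ generate up to the invertible monomials $z^{\widehat{\sigma}}$. If a direct argument fails, the fallback is the standard fact that $\bigoplus_{k} H^0(\mathcal{E}_F^{\vee}(D_F + kD_A))$ is generated in bounded degrees. I would combine this with \Cref{thm: globalSections} and Serre-type global generation of $\mathcal{E}_F^{\vee}(D'+D_F)$ for $D'$ large. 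Global generation gives generators on each $U_{\sigma}$, and those generators are forced to be the $F^{\sigma}_j$ up to units.
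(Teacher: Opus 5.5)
Your chart-by-chart argument is correct (reading ``nef'' as nef Cartier, see below), and it takes a genuinely different route from the paper.

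\textbf{The paper's route.} The paper argues globally. It takes the surjection $H^0(X_{\Sigma},\mathcal{E}_F^{\vee}(D_F))\otimes\mathcal{O}\to\mathcal{E}_F^{\vee}(D_F)$ with kernel $\mathcal{K}$. It then uses Serre vanishing of $H^1(\mathcal{K}\otimes\mathcal{O}(D'))$ to make $H^0(\mathcal{E}_F^{\vee}(D_F))\otimes H^0(\mathcal{O}(D'))\to H^0(\mathcal{E}_F^{\vee}(D_F+D'))$ surjective. Hence $\IFk_{D_F+D'}=R_{[D']}\cdot\IFk$, and it concludes because $\mathcal{O}(D')$ is base-point free.

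\textbf{Your route.} You instead note two things about the proof of \Cref{thm: Torus}(ii). First, it uses only integral Cartier data of the twisting divisor plus nefness of the poset divisors, to produce the generators $\f_j\otimes z^{m_j}$ with monomial invertible on $U_\sigma$. Second, it uses \Cref{lemm: MinkowskiReason}, which is insensitive to twisting. Crucially, the resulting local generators $F_1^\sigma,\dots,F_s^\sigma$ of \eqref{eq: Fisigma} do not depend on the twist at all. Since the poset of $\mathcal{E}_F^{\vee}(D_F+D')$ is that of $\mathcal{E}_F^{\vee}(D_F)$ shifted by $D'$, both ideal sheaves equal $\langle F_1^\sigma,\dots,F_s^\sigma\rangle$ on every chart.

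\textbf{Comparison.} Your route gives more than the statement asks: $D=0$ works, and no cohomology or vanishing theorem is needed. The paper's route, in exchange, never uses the explicit local generators or the vertical structure beyond global generation of $\mathcal{E}_F^{\vee}(D_F)$, so it is a general sheaf-theoretic argument.

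\textbf{The non-Cartier remedy is not viable.} Choosing $D=kD_A$ with $k$ divisible cannot make $D'+D_F$ Cartier, because Cartierness is a property of $D'$, not of the lower bound $D$. On a simplicial non-smooth $X_\Sigma$ there are nef non-Cartier $D'\geq D$ for every $D$. In fact no choice of $D$ could help, because the statement fails for such $D'$:
\begin{itemize}
\item Suppose $D'+D_F$ is not Cartier on $U_\sigma$. The computation in the second half of the proof of \Cref{thm: Torus}(ii) then shows that every element of the localized twisted ideal is an $R_\sigma$-combination of terms $\chi^{d}F_j^\sigma$ with $0\neq d\in\sigma^\vee\cap M$. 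A term with $d=0$ would give integral Cartier data on $\sigma$.
\item Hence that ideal lies in the maximal ideal of the torus-fixed point $p_\sigma$.
\item But $F_i^\sigma(p_\sigma)$ is a linear form in $u$ containing $u_i$, so $p_\sigma\notin V(\IFk)$ for generic $u$.
\end{itemize}

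\textbf{Correct resolution.} Read ``nef divisor'' as nef Cartier divisor. This is the paper's implicit convention: \Cref{prop: criterionNef} is stated for Cartier divisors, the proof of \Cref{thm: Torus} needs integral $b_\sigma$ for $D_F$, and the paper's own proof of this lemma uses that $\mathcal{O}(D')$ is globally generated. With that reading your obstacle disappears, and the saturation and Serre-type fallbacks are unnecessary.
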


\begin{proof}
We consider the surjective map of sheaves
$$H^0(X_{\Sigma},\mathcal{E}_F^{\vee}(D_F)) \otimes \mathcal{O}  \xrightarrow[]{} \mathcal{E}_F^{\vee}(D_F).$$
We denote by $\mathcal{K}$ its kernel. Tensoring with $\mathcal{O}(D)$ and taking the long exact sequence of cohomology, we get
$$\xrightarrow[]{} H^0(X_{\Sigma},\mathcal{E}_F^{\vee}(D_F)) \otimes H^0(X_{\Sigma},\mathcal{O}(D')) \xrightarrow[]{} H^0(X_{\Sigma},\mathcal{E}_F^{\vee}(D_F+D')) \xrightarrow[]{} H^1(X_{\Sigma},\mathcal{K} \otimes \mathcal{O}(D')) \xrightarrow[]{}$$
By Serre's vanishing \cite[Theorem 9.0.3]{coxlittleschneck}, there exists a nef divisor $D$ such that for all $D' \geq D$ the last term vanishes and the map
$$H^0(X_{\Sigma},\mathcal{E}_F^{\vee}(D_F)) \otimes H^0(X_{\Sigma},\mathcal{O}(D')) \xrightarrow[]{} H^0(X_{\Sigma},\mathcal{E}_F^{\vee}(D_F+D')) $$
is surjective. This implies that
$$\IFk_{D_F + D'} = R_{[D']}\cdot\IFk.$$
As $\mathcal{O}(D')$ is globally generated, i.e. it has no base points, we can deduce that the schemes defined by both ideals coincide.
\end{proof}

The main theorem of this section computes the degree of $\IFk$ in terms of the top Chern class of $\mathcal{E}_F$, when the ideal defines a finite subscheme of $X_{\Sigma}$.

\begin{theorem}
\label[theorem]{thm: VerticalHirzebruchRiemannRoch}
Let $F = C(u \star x^B)$ be a vertical system with $s = n$ and $(X_{\Sigma},\mathcal{E}_F)$ a simplicial vertical pair associated to $F$. If $V(\IFk) \subset X_{\Sigma}$ is finite, then
    $$\deg(\IFk) = \int_{X_{\Sigma}} c_n(\mathcal{E}_F).$$
\end{theorem}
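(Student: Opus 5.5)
The plan is to realize the finite subscheme $Z \coloneqq V(\IFk)$ as the zero scheme of a section of $\mathcal{E}_F$, and then use the Koszul resolution together with \Cref{thm: HRR}.

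\textbf{Step 1: the Koszul resolution of $\mathcal{O}_Z$.}
\Cref{thm: complexIsWellDefined} shows that $K^{\bullet}(\mathcal{E}_F)$ is well defined. On each chart $U_{\sigma}$ it is isomorphic to the Koszul complex on the $n$ generators $F^{\sigma}_1,\dots,F^{\sigma}_n$ of $\IFk_{\sigma}$ given by \Cref{thm: Torus}(ii).

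Since $s=n$ and $V(\IFk)$ is finite, $V(\IFk)$ has pure dimension $0=n-s$ (or is empty). Hence \Cref{corolary: exact} gives exactness of
$$0 \to \wedge^n\mathcal{E}_F^{\vee} \to \dots \to \mathcal{E}_F^{\vee} \to \mathcal{O}_{X_{\Sigma}} \to \mathcal{O}_Z \to 0.$$
The cokernel of $d_1$ is $\mathcal{O}_Z$ because, locally on $U_{\sigma}$, the image of $d_1$ is exactly $\IFk_{\sigma}$. This uses \Cref{thm: Torus}(ii) and, for compatibility with the global ideal, \Cref{lemma: generators}.

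\textbf{Step 2: Euler characteristics.}
Twisting by a sufficiently ample Cartier divisor is not needed. Additivity of $\chi$ along the resolution gives
$$\deg(\IFk)=\chi(\mathcal{O}_Z)=\sum_{\ell=0}^n(-1)^\ell\chi(\wedge^\ell\mathcal{E}_F^{\vee}).$$
The first equality holds because $Z$ is zero-dimensional, so $\chi(\mathcal{O}_Z)=h^0(\mathcal{O}_Z)$ is the length, i.e. the degree counted with multiplicity.

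Apply \Cref{thm: HRR} to each term:
$$\chi(\mathcal{O}_Z)=\int_{X_{\Sigma}}\Big(\sum_\ell(-1)^\ell\ch(\wedge^\ell\mathcal{E}_F^{\vee})\Big)\Td(X_{\Sigma}).$$
With Chern roots $x_1,\dots,x_n$ of $\mathcal{E}_F$, the bracket equals $\prod_{i=1}^n(1-e^{-x_i})$. This product is $x_1\cdots x_n$ plus terms of degree greater than $n$, and those vanish in $A^{\bullet}(X_{\Sigma})$. Since $\Td(X_{\Sigma})=1+(\text{higher degree})$, only the degree-$n$ part survives, giving $\int_{X_{\Sigma}} c_n(\mathcal{E}_F)$.

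This is the standard identity $\ch(\lambda_{-1}\mathcal{E}^{\vee})\Td = c_{top}(\mathcal{E})+\dots$. Rationally it holds on a simplicial complete toric variety, via the $\mathbb{Q}$-valued Chow ring or a resolution together with the projection formula. Alternatively, one can avoid HRR entirely by citing Fulton's result that the zero scheme of a regular section of $\mathcal{E}$ represents $c_n(\mathcal{E})\cap[X]$ (Example 3.2.16 of Fulton's Intersection Theory). Here $\widetilde{F}_u$ is such a section, since the vanishing ideal is locally generated by $n$ elements cutting out a zero-dimensional, hence regular, sequence on the Cohen–Macaulay $U_{\sigma}$.

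\textbf{Main obstacle.}
The hard step is identifying the cokernel of $d_1$ scheme-theoretically with $\IFk$, i.e. showing that $\widetilde{F}_u$ is a genuine global section of $\mathcal{E}_F$ whose local components are exactly the $F^{\sigma}_i$. I would handle this chartwise using \Cref{thm: complexIsWellDefined}(ii) and \Cref{thm: Torus}(ii), and then glue.

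A secondary issue is justifying HRR and the degree truncation on a singular simplicial $X_{\Sigma}$. I would handle it by working with rational coefficients and pulling back along a toric resolution, checking that $Z$ avoids the exceptional locus, or else using that the local complete intersection degree agrees.
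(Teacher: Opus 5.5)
Your proposal is correct and follows essentially the same route as the paper. Both arguments use exactness of $K^{\bullet}(\mathcal{E}_F)$ via \Cref{corolary: exact}, then additivity of $\chi$ to get $\chi(\mathcal{O}_Z)=\sum_{\ell}(-1)^{\ell}\chi(\wedge^{\ell}\mathcal{E}_F^{\vee})$, and finally \Cref{thm: HRR}, noting that $\prod_{j}(1-e^{-x_j})\cdot\Td(X_{\Sigma})$ has degree-$n$ part $x_1\cdots x_n=c_n(\mathcal{E}_F)$. The one difference is how the scheme cut out by the image of $d_1$ is identified with that of $\IFk$. The paper twists by a sufficiently positive nef $D$ (Serre vanishing and \Cref{lemma: generators}), whereas you do it chartwise via \Cref{thm: Torus}(ii) and \Cref{thm: complexIsWellDefined}(ii); that suffices, so the twist is not needed.
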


\begin{proof}
Let $\mathcal{I}_u \subset \mathcal{O}$ be the ideal sheaf defined by the image of $d_1$ in the Koszul complex at parameter values $u \in \mathbb{C}^m$. Let $D \in \Div(X_{\Sigma})$ satisfy:
\begin{enumerate}[label = (\roman*)]
\label{eq: parts}
    \item $H^i(X_{\Sigma},\wedge^{\ell}\mathcal{E}_F^{\vee} \otimes \mathcal{O}(D)) = 0$ for all $i > 0$ and $\ell \in \{1,\dots,n\}$.
    \item The ideal sheaf $\mathcal{I}_u \otimes \mathcal{O}(D)$ is generated by global sections.
    \item The subschemes of $X_{\Sigma}$ defined by $\IFk$ and $\IFk_D$ are isomorphic.
\end{enumerate}
Nef divisors satisfying these conditions exist due to Serre's vanishing theorem \cite[Theorem 9.0.3]{coxlittleschneck} and our previous \Cref{lemma: generators}. Moreover, the subscheme $Z \subset X_{\Sigma}$ defined by $\mathcal{I}_u$ is isomorphic to the subscheme defined by $\mathcal{I}_u \otimes \mathcal{O}(D)$. From \Cref{corolary: exact}, the $K^{\bullet}(\mathcal{E}_F)$ is exact. By (i), twisting the Koszul complex by $\mathcal{O}(D)$ and taking global sections, we get an exact sequence of vector spaces,
\begin{equation}
\label{eq: globalsections}
0 \xrightarrow[]{} H^0(X_{\Sigma},\wedge^s\mathcal{E}_F^{\vee} \otimes \mathcal{O}(D))  \xrightarrow[]{d_{s}} \dots \xrightarrow[]{d_2} H^0(X_{\Sigma},\mathcal{E}_F^{\vee} \otimes \mathcal{O}(D)) \xrightarrow[]{d_1} H^0(X_{\Sigma},\mathcal{O}(D)).
\end{equation}
In particular, $Z$ is isomorphic to the scheme defined by the image of the last map, which equals the subscheme defined by $\IF_D$. Using (ii) and (iii), $Z$ is a finite subscheme with
$$\deg(\IFk) = \deg(\IFk_{D}) = \deg(Z) = \chi(\mathcal{O}_Z).$$
As the Koszul complex is exact, this equals
$$\sum_{\ell \geq 0}(-1)^{\ell}\chi(\wedge^{\ell}\mathcal{E}_F^{\vee}).$$
Using \Cref{thm: HRR}, this computation is reduced to computing the Chern characters of $\wedge^i\mathcal{E}_F^{\vee}$ for $i \in \{1,\dots,n\}$. If we express the total Chern class of $\mathcal{E}_F$ as
$$c(\mathcal{E}_F) = \prod_{j = 1}^n(1 + x_j),$$
we may write the Chern characters as $$\ch(\wedge^{\ell}\mathcal{E}_F^{\vee}) = \sum_{\substack{J \subseteq [n]\\ |J|=\ell}}e^{-\sum_{j \in J}x_j}, \quad \ell \in \{1,\dots,s\}.$$
With this, we compute the alternating sum of Chern characters
\begin{equation}
\label{eq: alternatesum}
\sum_{\ell = 0}^n(-1)^\ell \ch(\wedge^\ell \mathcal{E}_F^{\vee}) = \sum_{J \subset [n]}(-1)^{|J|}e^{-\sum_{j \in J}x_j} =\prod_{j = 1}^n(1 - e^{-x_j}) = \prod_{j = 1}^n x_j \left( 1 - \frac{x_j}{2} + \frac{x_j^2}{6} - \cdots \right).\end{equation}
The Todd class has its degree $0$ component equal to $1$, so we may write
\begin{equation}
\label{eq: toddsigma}
\Td(X_{\Sigma}) = 1 + \Td_1(X_{\Sigma}) + \Td_2(X_{\Sigma}) + \cdots.
\end{equation}
In particular, the degree-$n$ part of the product of \eqref{eq: alternatesum} and \eqref{eq: toddsigma} is $x_1\cdots x_n$. Therefore, using \Cref{thm: HRR},
$$\sum_{\ell \geq 0}(-1)^{\ell}\chi(\wedge^{\ell}\mathcal{E}_F^{\vee}) = \int_{X_{\Sigma}}\Big(\sum_{\ell = 0}^n(-1)^{\ell} \ch(\wedge^{\ell} \mathcal{E}_F^{\vee})\Big) \cdot \Td(X_{\Sigma}) = \int_{X_{\Sigma}}x_1\cdots x_n = \int_{X_{\Sigma}}c_n(\mathcal{E}_F),$$
concluding the proof.
\end{proof}

\subsection{The generic number of zeros of vertical systems and $V(\IFk)$}
At this point, we distinguish two cases:

\begin{enumerate}[label = (\roman*)]
    \item If the system $F = C(u \star x^B)$ has degenerate zeros for generic $u \in \mathbb{C}^m$, Feliu, Henriksson and Pascual-Escudero \cite[Theorem 3.7]{FELIU2025630} have shown that  $V_{\Tn}(F_u) = \varnothing$ for generic $u \in \mathbb{C}^m$. We say that these vertical systems are \textit{generically inconsistent}.
    \item If the system $F = C(u \star x^B)$ has nondegenerate zeros for generic $u \in \mathbb{C}^m$, the same authors \cite[Theorem 3.7]{FELIU2025630} have shown that there exists an open subset where $V_{\Tn}(F_u)$ is nonempty and zero-dimensional with all the zeros being nondegenerate. In this case, we say that these vertical systems are \textit{generically consistent}.
\end{enumerate}

If $F = C(u \star x^B)$ is generically inconsistent and $s = n$, the generic root count equals zero. Moreover, for the parameters $u \in \mathbb{C}^m$ where $V_{\Tn}(F_u) \neq \varnothing$, \cite[Theorem 3.7]{FELIU2025630} implies that $\dim V_{\Tn}(F_u) > 0$ and it has no isolated points. Since $X_{\Sigma}$ is projective, $V(\IFk)$ cannot be contained entirely in the affine torus $\Tn$. Therefore, $V(\IFk)$ intersects the toric boundary, so $V(\IFk) \cap V(z_{\rho}) \neq \varnothing$ for some $\rho \in \Sigma(1)$, proving part (iii) of \Cref{mainTheoremIntro} for this case. Therefore, we focus on the case where $F$ is generically consistent.

\begin{theorem}
\label[theorem]{cor: numberOfIsolatedSolutions}
Let $F = C(u \star x^B)$ be a generically consistent vertical system with $s = n$ and let $(X_{\Sigma},\mathcal{E}_F)$ be a simplicial vertical pair associated to it. For each $u \in \C^m$, we have
$$|V_0(F_{u}) \cap \Tn| \leq \int_{X_{\Sigma}}c_n(\mathcal{E}_F) = \max_{u' \in \mathbb{C}^m}|V_0(F_{u'}) \cap \Tn| $$
Moreover, the bound is attained if and only if $V(\IFk) \cap V(z_{\rho}) = \varnothing$ for all $\rho \in \Sigma(1)$, which happens in an open subset of $\mathbb{C}^m$.
\end{theorem}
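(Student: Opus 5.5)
The plan is to compare the number of isolated torus zeros of $F_u$ with the degree of the finite part of $V(\IFk)$, and to use \Cref{thm: VerticalHirzebruchRiemannRoch} together with the genericity statements already proved. Write $N \coloneqq \int_{X_{\Sigma}} c_n(\mathcal{E}_F)$.

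\textbf{Step 1: the generic count equals $N$.} By \Cref{thm: MainNoSolutionsInfinity} there is a nonempty open set $U_1 \subset \mathbb{C}^m$ on which $V(\IFk)$ meets no $V(z_\rho)$. Since $F$ is generically consistent, \cite[Theorem 3.7]{FELIU2025630} gives a nonempty open set $U_2$ on which $V_{\Tn}(F_u)$ is finite and consists of nondegenerate zeros. For $u \in U_1 \cap U_2$, \Cref{thm: Torus}(i) shows that $V(\IFk)$ is contained in $\Tn$, is finite and is reduced. Hence \Cref{thm: VerticalHirzebruchRiemannRoch} gives $|V_{\Tn}(F_u)| = \deg \IFk = N$.

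\textbf{Step 2: the upper bound for arbitrary $u$.} Consider the incidence scheme $\mathcal{Z} = V(I(F)) \subset \mathbb{C}^m \times X_{\Sigma}$ and the projection $\pi: \mathcal{Z} \to \mathbb{C}^m$, which is proper because $X_{\Sigma}$ is projective. Fix $u_0$ and an isolated point $p \in V_0(F_{u_0}) \cap \Tn$; by \Cref{thm: Torus}(i) it is an isolated point of the fibre $\pi^{-1}(u_0)$. Locally on $U_\sigma$, part (ii) of \Cref{thm: Torus} describes $\mathcal{Z}$ by $n$ equations in $\mathbb{C}^m \times U_\sigma$. Every component of $\mathcal{Z}$ through $p$ therefore has dimension at least $m$, and since $p$ is isolated in its fibre, such a component is dominant onto $\mathbb{C}^m$ and quasi-finite near $p$.

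By upper semicontinuity of multiplicities (conservation of number for proper maps, \cite[Ch.~10]{Fulton1984}), the sum of the local multiplicities at the isolated points of the fibre over $u_0$ is at most the number of points of nearby generic fibres that specialize to them. Those generic points lie in $\Tn$ and number $N$ by Step 1. Thus $|V_0(F_{u_0})\cap\Tn| \le N$. Combined with Step 1, this gives $N = \max_{u'} |V_0(F_{u'})\cap\Tn|$.

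\textbf{Step 3: the equivalence.} Suppose $V(\IFk)$ avoids every $V(z_\rho)$. Then $V(\IFk)$ is a closed subset of the projective variety $X_\Sigma$ contained in the affine $\Tn$, so it is finite. By \Cref{thm: VerticalHirzebruchRiemannRoch}, the degree of $\IFk$ is $N$.

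For the converse direction of the count we also need the number of distinct points to equal $N$, not just the degree. This is the step I expect to be the main obstacle. Reading $|V_0|$ as counted with multiplicity makes the statement consistent, and I would adopt that convention (it matches ``counted with multiplicity'' earlier in the section).

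Conversely, suppose some point of $V(\IFk)$ lies on the boundary. Either it is isolated, or it lies on a positive-dimensional component. In the isolated case it absorbs a positive multiplicity in the conservation count of Step 2, so the torus part has degree strictly less than $N$. In the positive-dimensional case, deform to a nearby generic $u$: the torus isolated points of $u_0$ specialize from at most $N$ points, and at least one of the nearby generic points must specialize into the boundary, by properness and the fact that the limit set is connected and meets the boundary. I would make this precise with the dynamic intersection / specialization of the cycle $\pi^{-1}(u)$ as $u \to u_0$: the limit cycle has degree $N$, and its part supported on the boundary is nonzero, giving strict inequality.

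Finally, openness of the good locus follows because $\pi(\mathcal{Z} \cap (\mathbb{C}^m \times \bigcup_\rho V(z_\rho)))$ is closed by properness. Its complement is nonempty by \Cref{thm: MainNoSolutionsInfinity}.
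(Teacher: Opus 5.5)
Your Steps~1 and~2 are fine, and they take a somewhat different route from the paper. The paper quotes the external fact that the maximal number of isolated torus zeros is attained for generic parameters; you instead get the upper bound directly by conservation of number on the incidence scheme $\mathcal{Z}=V(I(F))\subset\mathbb{C}^m\times X_\Sigma$. Several other parts are also correct:
\begin{itemize}
\item the direction ``no zeros at infinity $\Rightarrow$ count $=N$'';
\item the case of an isolated boundary zero, where the local flatness holds because, by \eqref{equationFG}, $\mathcal{Z}$ is Cohen--Macaulay of dimension $m$ there;
\item the openness argument.
\end{itemize}

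The genuine gap is the positive-dimensional case of the converse in Step~3.
\begin{itemize}
\item Properness only guarantees that the $N$ generic roots have limits somewhere in $V(\IFk)$. It does not force any of them onto a positive-dimensional component.
\item Along a one-parameter degeneration the limit set is finite, so ``the limit set is connected'' is not available.
\item In a general proper family the claim is actually false. Take the sections $s_t=(t,f+tg)$ of $\mathcal{O}\oplus\mathcal{O}(3)$ on $\mathbb{P}^2$, with $f,g$ cubics. Then $Z(s_0)=\{f=0\}$ is a curve, yet $Z(s_t)=\varnothing$ for $t\neq 0$. This excess component lies in a non-dominant component of the incidence scheme and receives no limit mass.
\end{itemize}
So ``the part of the limit cycle on the boundary is nonzero'' is exactly what has to be proved. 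The paper itself disposes of this case by appealing to the argument of Bernstein's Theorem~B(b).

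The missing ingredient, specific to vertical systems, is that $\mathcal{Z}$ is irreducible.
\begin{itemize}
\item The derivation of \eqref{equationFG} in \Cref{thm: Torus}(ii) is uniform in $u$. On each chart $\mathbb{C}^m\times U_\sigma$ it lets you solve $u_1,\dots,u_n$ as regular functions of $(z,u_{n+1},\dots,u_m)$.
\item Hence $\mathcal{Z}\cap(\mathbb{C}^m\times U_\sigma)\cong U_\sigma\times\mathbb{C}^{m-n}$.
\item These irreducible charts all contain $\mathcal{Z}\cap(\mathbb{C}^m\times\Tn)$ as a dense open subset. So $\mathcal{Z}$ is irreducible of dimension $m$, and it is the closure of $\pi^{-1}(U)$ for every nonempty open $U\subset\mathbb{C}^m$.
\end{itemize}
Now take any boundary point $q\in V(I(F_{u_0}))$, isolated or not. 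By curve selection there is a curve of generic parameters $u(t)\to u_0$ carrying a root $z(t)\to q$. Along this curve the limit cycle has three properties:
\begin{itemize}
\item total degree $N$;
\item mass exactly $e_p$ at each isolated torus zero $p$, by your local flatness argument;
\item positive mass at $q$.
\end{itemize}
This gives $\sum_p e_p\le N-1$ and closes the gap. It also makes your case distinction between isolated and non-isolated boundary points unnecessary.
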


\begin{proof}
By \cite[Corollary 2.16]{genericrootcounts}, there exists a nonempty open subset $U_0 \subset \mathbb{C}^m$ such that for every $u \in U_0$, the number of isolated zeros of $F_u(z) = 0$ in $\Tn$, counted with multiplicity, equals the maximal number of isolated zeros in $\Tn$ over all parameters. If $V(\IFk)$ is not finite, then since $X_{\Sigma}$ is projective, $V(\IFk)$ cannot be contained entirely in the affine torus $\Tn$, i.e. $V(\IFk) \cap V(z_\rho)$ is nonempty for some $\rho \in \Sigma(1)$. Then, the same argument as in \cite[Theorem B, b)]{Bernshtein1975TheNO} yields that the maximal number of isolated zeros in $\Tn$ cannot be attained.

When $V(\IFk)$ is finite, \Cref{thm: Torus} and \Cref{thm: VerticalHirzebruchRiemannRoch} imply that
\[
|V_{\Tn}(F_u)| = |V_{\Tn}(\IFk)| \leq \int_{X_{\Sigma}} c_n(\mathcal{E}_F),
\]
where the number of solutions is counted with multiplicity. By \cite[Theorem 3.7]{FELIU2025630}, there exists an open subset $U_1 \subset \mathbb{C}^m$ such that $\dim V_{\Tn}(F_u) = 0$ and $V_{\Tn}(F_u) \neq \varnothing$ for all $u \in U_1$. Additionally, \Cref{thm: MainNoSolutionsInfinity} ensures the existence of an open subset $U_2 \subset \mathbb{C}^m$ such that
\[
V(\IFk) \cap V(z_\rho) = \varnothing
\]
for all rays $\rho \in \Sigma(1)$. Thus, $u \in U_0 \cap U_1 \cap U_2$ if and only if $V(\IFk) \cap V(z_\rho) = \varnothing$ for all $\rho \in \Sigma(1)$. Applying \Cref{thm: VerticalHirzebruchRiemannRoch} to this open set yields
\[
|V_0(F_u) \cap \Tn| = |V(\IFk)| = \int_{X_{\Sigma}} c_n(\mathcal{E}_F),
\]
if and only if $u \in U_0 \cap U_1 \cap U_2$.
\end{proof}

\subsection{Generic root counts in terms of mixed volumes}
In \Cref{thm: VerticalHirzebruchRiemannRoch}, we have reduced finding the degree of $\IFk$ to computing the top Chern class of $\mathcal{E}_F$. In order to deduce the value of $c_n(\mathcal{E}_F)$, we will use the resolutions coming from the Weil decoration of a toric vector bundle (see \Cref{thm: resolutions}). Recall that, given any poset $(\mathcal{S},\leq)$, the M\"obius function $\mu: \mathcal{S} \times \mathcal{S} \xrightarrow[]{}\mathbb{Z}$ is recursively defined as
$$\mu(x,y) \coloneqq \begin{cases}
    1 & x = y \\
    -\sum_{x \leq z < y}\mu(x,z) & x \neq y.
\end{cases}$$

\begin{definition}
\label[definition]{def: almostNeflyDecorated}
Let $X_{\Sigma}$ be a complete toric variety and let $\mathcal{E}$ be a toric vector bundle of rank $s$. For each $D \in \mathcal{S}_E$, we define
$$\delta_{D} \coloneqq \sum_{D \geq D' \in \mathcal{S}_E}\dim(V_{D'})\mu(D',D),$$
where $\mu$ is the Möbius function on $\mathcal{S}_E$.
\end{definition}

Firstly, we compute the total Chern class of $\mathcal{E}_F$ in terms of the line bundles in $\mathcal{S}_E$.

\begin{lemma}
\label[lemma]{lemma: totalChernClass}
Let $F = C(u \star x^B)$ be a vertical system and let $(X_{\Sigma},\mathcal{E}_F)$ be a vertical pair. Then, the total Chern class of $\mathcal{E}_F$ equals
\begin{equation}
\label{rk: delta_D}
c(\mathcal{E}_F) = \prod_{D \in \mathcal{S}_E}(1 + c_1(\mathcal{O}(D)))^{\delta_D}.
\end{equation}
\end{lemma}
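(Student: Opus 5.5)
We will obtain the formula from the Whitney formula \eqref{eq: whitney} applied to the locally free resolution of \Cref{thm: resolutions}, followed by a Möbius inversion on the poset $\mathcal{S}_E$.

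First, \Cref{thm: resolutions} gives an exact complex $0\to \mathcal{R}_s\to\dots\to\mathcal{R}_0\to\mathcal{E}_F\to 0$, where
\[
\mathcal{R}_k=\bigoplus_{D,D'\in\mathcal{S}_E}\bigoplus_{\mathcal{C}_k(D,D')}V_D\otimes\mathcal{O}(D').
\]
By \Cref{rk: itIsIndeedSumOfLineBundles}, each summand is $\dim(V_D)$ copies of $\mathcal{O}(D')$. Splitting the long exact sequence into short exact sequences and applying \eqref{eq: whitney} repeatedly yields
\[
c(\mathcal{E}_F)=\prod_{k}c(\mathcal{R}_k)^{(-1)^k}.
\]
This identity holds in the multiplicative group of units of $A^{\bullet}(X_\Sigma)_{\mathbb{Q}}$, which is fine because total Chern classes have constant term $1$. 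If $X_\Sigma$ is only simplicial, we work rationally; the Whitney formula for vector bundles holds there.

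Second, each $c(\mathcal{R}_k)$ is a product of factors $(1+c_1(\mathcal{O}(D')))$. Collecting exponents, the power of $(1+c_1(\mathcal{O}(D')))$ is
\[
e_{D'}=\sum_{D\le D'}\dim(V_D)\sum_{k\ge0}(-1)^k\,|\mathcal{C}_k(D,D')|.
\]
By Philip Hall's theorem, the alternating count of strict chains from $D$ to $D'$ equals the Möbius function $\mu(D,D')$. Here $\mathcal{C}_0(D,D')$ is nonempty only when $D=D'$, which matches $\mu(D,D)=1$. Hence $e_{D'}=\delta_{D'}$ as in \Cref{def: almostNeflyDecorated}, and the product formula follows.

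The main obstacle is bookkeeping rather than conceptual. The resolution in \Cref{thm: resolutions} stops at length $s$, while chains in $\mathcal{S}_E$ could in principle be longer. We therefore need to check that chains of length greater than $s$ do not occur, or at least do not contribute. I would try to show that $\dim V_D$ strictly decreases along strict chains in $\mathcal{S}_E$: the map $D\mapsto V_D$ is injective, since $D_{\mathcal{E}}(e)$ for a generic $e\in V_D$ recovers $D$, and it is order-reversing. This bounds chain lengths by $s$, so Hall's formula applies with all terms present.

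It remains to confirm that the indexing of $\delta_D$, namely summing over $D'\le D$ with $\mu(D',D)$, matches the orientation of the chains in $\mathcal{C}_k(D,D')$. This follows by comparing with the resolution's convention, in which the summand $V_D\otimes\mathcal{O}(D')$ with $D\le D'$ appears. Once the orientation is fixed, the exponent is exactly $\delta_{D'}$.
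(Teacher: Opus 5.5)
Your route is the paper's: Whitney's formula applied to the resolution of \Cref{thm: resolutions}, then Hall's chain formula for $\mu$. Your remark that $D\mapsto V_D$ is injective and strictly order-reversing is correct, and it justifies a step the paper leaves implicit: strict chains in $\mathcal{S}_E$ have length at most $s-1$, so the truncation at $k=s$ is harmless. The genuine gap is the orientation check you settle in your last paragraph by deferring to the resolution's convention. That convention is wrong, and with it the statement. Use the paper's definitions ($D\le D'$ iff $D'-D$ is effective, $V_D=\{e : D_{\mathcal{E}}(e)\ge D\}$) on the tangent bundle of $\mathbb{P}^2$ from \Cref{ex: TangentExample}. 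There $\mathcal{S}_E=\{0,D_0,D_1,D_2\}$ with $0<D_i$, $\dim V_0=2$ and $\dim V_{D_i}=1$. If a chain $D<D'$ contributed $V_D\otimes\mathcal{O}(D')$, the resolution would read $0\to\bigoplus_i\mathcal{O}(D_i)^{\oplus 2}\to\mathcal{O}^{\oplus 2}\oplus\bigoplus_i\mathcal{O}(D_i)\to\mathcal{E}_F\to 0$. Its ranks are $6,5,2$, so it cannot be exact. Likewise the stated exponents are $\delta_{D_i}=1-2=-1$, so the claimed identity gives $c(\mathcal{E}_F)=(1+H)^{-3}$, with $H$ the hyperplane class, and hence $c_1(\mathcal{E}_F)=-3H$. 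But by \Cref{prop: maxminwedgeDual}(ii), $\det\mathcal{E}_F=\mathcal{O}(D_{F,1}+D_{F,2})=\mathcal{O}(D_0+D_1+D_2)$, so $c_1(\mathcal{E}_F)=3H$.

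The orientation compatible with the natural maps is the reverse one. For $D<D'$ you have $V_{D'}\subseteq V_D$ and a map $\mathcal{O}(D)\to\mathcal{O}(D')$, so in a bar-type resolution the chain $D_0<\dots<D_k$ contributes $V_{D_k}\otimes\mathcal{O}(D_0)$. In the example this gives $0\to\mathcal{O}^{\oplus 3}\to\mathcal{O}^{\oplus 2}\oplus\bigoplus_i\mathcal{O}(D_i)\to\mathcal{E}_F\to 0$, with ranks $3,5,2$, consistent with the Euler sequence plus a trivial summand. This is presumably the intended form of \Cref{thm: resolutions}. Your bookkeeping then goes through verbatim, but the exponent on $1+c_1(\mathcal{O}(D))$ becomes $\sum_{D'\ge D}\dim(V_{D'})\,\mu(D,D')$, not the $\delta_D$ of \Cref{def: almostNeflyDecorated}. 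In the example this gives $\delta_{D_i}=1$ and $\delta_0=2-3=-1$. Hence $c(\mathcal{E}_F)=(1+H)^3$ and $\int_{\mathbb{P}^2}c_2(\mathcal{E}_F)=3$, which is the value reported in \Cref{ex: continueTangentBundle}. The stated exponents would instead give $1-3H+6H^2$, i.e.\ $6$. So the step to repair is exactly the one you flagged: check the orientation against the maps, not against the transcribed statement, and correct $\delta_D$ accordingly.
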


\begin{proof}
By \cite[Proposition 3.1.11]{rotaway}, we have that for all $D,D' \in \mathcal{S}_E$
$$\mu(D,D') = \sum_{k \geq 0}(-1)^k|\mathcal{C}_k(D,D')|,$$
i.e. the M\"obius function equals the alternating sum over the chains of length $k$ in the poset. Using the resolution in \Cref{thm: resolutions} and recalling the formula \eqref{eq: whitney} for the total Chern class, we have that, for fixed $D' \in \mathcal{S}_E$, the alternating sum over all terms of the form $V_{D} \otimes \mathcal{O}(D')$ for some $D \leq D'$ yields
$$ \prod_{D' \geq D \in \mathcal{S}_E} c(V_{D} \otimes \mathcal{O}(D'))^{\sum_{k = 0}^s(-1)^k|\mathcal{C}_k(D,D')|}.$$
Moreover, we have $c(\mathcal{O}(D')) = 1 + c_1(\mathcal{O}(D'))$ and $c(V_{D} \otimes \mathcal{O}(D')) = c(\mathcal{O}(D'))^{\dim V_{D}}$, implying the formula.
\end{proof}

We assume that $\mathcal{S}_E = \{D_1,\dots,D_t\}$ for some $t \geq 0$. For each $D_i \in \mathcal{S}_E$, we denote by $\Delta_i$ the corresponding polytope and $\delta_i$ the exponent in \eqref{rk: delta_D}. If $p \in \mathbb{Z}^t_{\geq 0}$ is such that $\sum_{i = 1}^t p_i = n$, we denote by $\MV(\Delta_1[p_1],\dots,\Delta_t[p_t])$, the mixed volume with $p_i$ copies of $\Delta_i$ for $i \in \{1,\dots,t\}$. We recall that if $\delta < 0$, then
$$\binom{\delta}{p} \coloneqq (-1)^p\binom{p - \delta - 1}{p}.$$

Our final main result describes the generic root count over $\Tn$ for the vertical system $F$ as an alternating sum of the mixed volumes of the polytopes associated to the divisors $D \in \mathcal{S}_E$.

\begin{theorem}
\label[theorem]{thm: RootCountFinalFormula}
Let $F = C(u \star x^B)$ be a generically consistent vertical system with $s = n$ and let $(X_{\Sigma},\mathcal{E}_F)$ be a simplicial vertical pair. If $D$ is nef for all $D \in \mathcal{S}_E$, then for all $u \in \C^m$, we have
$$|V_0(F_{u}) \cap \Tn| \leq \sum_{\substack{p \in \mathbb{Z}_{\geq0}^{t} \\
p_1 + \dots + p_t = n}} \;
\mu_{p}\MV\big( \Delta_1[p_1],\dots,\Delta_t[p_t]\big), \quad \mu_{p} \coloneqq
    \prod_{i = 1}^t\binom{\delta_i}{p_i}, $$
where $\Delta_1,\dots,\Delta_t \subset \mathbb{R}^n$ are the polytopes associated with the divisors $D_1,\dots,D_t$. Moreover, there exists a Zariski open subset $U \subset \mathbb{C}^m$ where the bound is attained.
\end{theorem}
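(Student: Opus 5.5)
The plan is to combine \Cref{cor: numberOfIsolatedSolutions} with \Cref{lemma: totalChernClass}. First, \Cref{cor: numberOfIsolatedSolutions} gives, for every $u \in \C^m$,
\[
|V_0(F_u)\cap \Tn| \le \int_{X_\Sigma} c_n(\mathcal{E}_F).
\]
It also says equality holds exactly on the open set where $V(\IFk)$ avoids every $V(z_\rho)$, and that set is nonempty by \Cref{thm: MainNoSolutionsInfinity}. This already settles the ``moreover'' clause. It therefore suffices to prove the identity
\[
\int_{X_\Sigma} c_n(\mathcal{E}_F)=\sum_{p}\mu_p \MV(\Delta_1[p_1],\dots,\Delta_t[p_t]).
\]

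Write $h_i := c_1(\mathcal{O}(D_i)) \in A^1(X_\Sigma)_{\mathbb{Q}}$. Since $X_\Sigma$ is simplicial, each $D_i$ is $\mathbb{Q}$-Cartier, so $h_i$ makes sense rationally. By \Cref{lemma: totalChernClass},
\[
c(\mathcal{E}_F)=\prod_{i=1}^t (1+h_i)^{\delta_i}.
\]
Expand each factor as a formal binomial series, $(1+h)^{\delta}=\sum_{p\ge 0}\binom{\delta}{p}h^p$. This is valid for negative $\delta$ with the stated convention, and the series is finite because $A^{>n}=0$. Taking the degree-$n$ part gives
\[
c_n(\mathcal{E}_F)=\sum_{|p|=n}\prod_i \binom{\delta_i}{p_i}\, h_1^{p_1}\cdots h_t^{p_t}.
\]
Integrating term by term reduces the claim to showing that $\int_{X_\Sigma} h_1^{p_1}\cdots h_t^{p_t}$ equals $\MV(\Delta_1[p_1],\dots,\Delta_t[p_t])$. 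This is the intersection-product version of \Cref{thm: MixedVolumeChernClass}: apply that theorem to the split bundle $\bigoplus_j \mathcal{O}(D_{i_j})$, with $D_i$ repeated $p_i$ times, whose top Chern class is exactly this monomial.

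The main obstacle is that \Cref{thm: MixedVolumeChernClass} is stated for nef \emph{Cartier} divisors, while the $D_i$ are assumed only nef and are merely $\mathbb{Q}$-Cartier. I would handle this by scaling. Pick $k\ge1$ such that every $kD_i$ is Cartier; this is possible because $\mathcal{S}_E$ is finite. The divisors $kD_i$ are still nef, their polytopes are $k\Delta_i$, and both sides are homogeneous of degree $n$ in the divisors. Applying the theorem to the $kD_i$ and dividing by $k^n$ then gives the identity for the $D_i$.

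I would also check that $c_1(\mathcal{O}(D))$ for a Weil divisor $D$ in \Cref{lemma: totalChernClass} is interpreted in $A^\bullet(X_\Sigma)_{\mathbb{Q}}$, so that this rational manipulation is legitimate. Since both sides of the final identity are integers, the rational computation gives the integral statement.
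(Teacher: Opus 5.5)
Your proposal is correct and follows the paper's proof. The paper likewise expands $\prod_i(1+c_1(\mathcal{O}(D_i)))^{\delta_i}$ from \Cref{lemma: totalChernClass}, takes the degree-$n$ part, identifies each monomial with a mixed volume via \Cref{thm: MixedVolumeChernClass}, and concludes with the Chern-class bound, for which your appeal to \Cref{cor: numberOfIsolatedSolutions} is the precise reference; your scaling step from $\mathbb{Q}$-Cartier to Cartier divisors supplies a detail the paper leaves implicit.
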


\begin{proof}
If we let $x_i = c_1(\mathcal{O}(D_i))$ for $i \in [t]$, we have to consider the degree $n$ part of the series
$\prod_{i = 1}^t(1 + x_i)^{\delta_i}$. Recall that
$$(1 + x_i)^{\delta_i} = \sum_{p \geq 0}\binom{\delta_i}{p}x_i^p.$$
Taking the product over all of the power series, we have that
\begin{equation}
\label{eq: expansion}
\prod_{i = 1}^t(1 + x_i)^{\delta_i} = \sum_{p \in \mathbb{Z}_{\geq 0}^t}\mu_p \prod_{i = 1}^t x_i^{p_i}.
\end{equation}
As all of the $D \in \mathcal{S}_E$ that appear in the expression \eqref{rk: delta_D} are nef,  \Cref{thm: MixedVolumeChernClass} implies that for all $p \in \mathbb{Z}_{\geq 0}^t$ such that $\sum p_i = n$, we have
$$\int_{X_{\Sigma}} \prod_{i = 1}^t x_i^{p_i} = \MV(\Delta_1[p_1],\dots,\Delta_t[p_t]).$$
Taking the degree $n$ part in \eqref{eq: expansion} and using \cref{thm: VerticalHirzebruchRiemannRoch}, we deduce the result.
\end{proof}

\begin{remark}
\label[remark]{rk: notNeflyDecorated}
     The root count can also be expressed in terms of mixed volumes when $\mathcal{E}_F$ is not nef, as long as the divisors $D \in \mathcal{S}_E$ are written as the difference of two nef divisors. Namely, if $D = D_1 - D_2$, for two nef divisors $D_1,D_2 \in \Nef(X_{\Sigma})$, then
     $$c(\mathcal{O}(D)) = 1 + c_1(\mathcal{O}(D_1)) - c_1(\mathcal{O}(D_2)).$$
     In that case, we consider $\Delta_D$ to be a difference of polytopes $\Delta_{D_1} - \Delta_{D_2}$ and extend the mixed volume multilinearly.
\end{remark}

\begin{theorem}
\label[theorem]{thm: splitRootCount}
Let $F = C(u \star x^B)$ be a generically consistent vertical system and $(X_{\Sigma},\mathcal{E}_F)$ a simplicial vertical pair associated to $F$ and let $u \in \mathbb{C}^m$. If $s = n$ and $\mathcal{E}_F = \bigoplus_{i = 1}^n \mathcal{O}(D_i)$ for some toric line bundles $D_i$, then
$$|V_0(F_{u}) \cap \Tn| \leq \MV(\Delta_1,\dots,\Delta_n), $$
where $\Delta_i$ are the polytopes associated with $D_i$ for each $i \in \{1,\dots,s\}$ (see \eqref{eq:polytope}). Moreover, there exists a Zariski open subset $U \subset \mathbb{C}^m$ where the bound is attained.
\end{theorem}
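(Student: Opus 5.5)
The plan is to reduce to \Cref{cor: numberOfIsolatedSolutions}, which already gives, for every $u \in \mathbb{C}^m$,
$$|V_0(F_u) \cap \Tn| \leq \int_{X_{\Sigma}} c_n(\mathcal{E}_F) = \max_{u'}|V_0(F_{u'})\cap \Tn|,$$
with equality exactly on the open set where $V(\IFk)$ has no points on any $V(z_\rho)$. It therefore suffices to show that $\int_{X_\Sigma} c_n(\mathcal{E}_F) = \MV(\Delta_1,\dots,\Delta_n)$ when $\mathcal{E}_F \cong \bigoplus_{i=1}^n \mathcal{O}(D_i)$.

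First, by the Whitney formula \eqref{eq: whitney} applied to the split sequences, $c(\mathcal{E}_F) = \prod_{i=1}^n (1 + c_1(\mathcal{O}(D_i)))$. Taking the degree-$n$ part gives $c_n(\mathcal{E}_F) = \prod_i c_1(\mathcal{O}(D_i))$, so the top Chern class is the intersection product $(D_1\cdots D_n)$.

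To identify this with a mixed volume via \Cref{thm: MixedVolumeChernClass}, I need the $D_i$ to be nef Cartier. Cartier-ness is automatic, since direct summands of a vector bundle are line bundles. Nefness is the main obstacle, and I would handle it in two stages.

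\textbf{Stage 1: the divisors are nef in the relevant sense.} Since $\mathcal{E}_F$ splits, each ray filtration splits as a direct sum of one-dimensional filtrations with jumps $a_{\rho,i}$, where $D_i=\sum_\rho a_{\rho,i} D_\rho$. Choose the equivariant splitting so that on each maximal cone $\sigma$ the summands match the weights in \eqref{eq: trivialization}. Then the Cartier data of $D_i$ on $\sigma$ is $m_\sigma = b_{j}$ for the corresponding $j\in\mathcal{I}_\sigma$, up to sign conventions. Using \Cref{lemm: MinkowskiReason} and the vertex inequality \eqref{eq: vertexeq}, I would show that $\langle v_\rho, b_j\rangle \geq -a_{\rho,i}$ for all rays $\rho$. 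By \Cref{prop: criterionNef}(iii), this gives that $D_i$ is nef.

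\textbf{Stage 2: avoid the combinatorics if needed.} If Stage 1 cannot be carried out cleanly, I would use the multilinear extension of the mixed volume from \Cref{rk: notNeflyDecorated}. Write each $D_i = (D_i + kA) - kA$ with $A$ ample, so that both terms are nef. The intersection number $(D_1\cdots D_n)$ is multilinear, and so is the mixed volume of virtual polytopes. Hence \Cref{thm: MixedVolumeChernClass}, applied term by term, gives $(D_1\cdots D_n)=\MV(\Delta_1,\dots,\Delta_n)$ with $\Delta_i$ understood as virtual polytopes.

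Finally, I would combine this identity with \Cref{cor: numberOfIsolatedSolutions}. That yields the inequality, the characterization of equality (no solutions at infinity), and the Zariski open set $U = U_0\cap U_1\cap U_2$ from that proof on which the bound is attained. As a sanity check, in \Cref{ex: exampleContinued2} with $\mathcal{O}(1,1)\oplus\mathcal{O}(0,1)$ on $\mathbb{P}^1\times\mathbb{P}^1$, the formula gives $\MV = 1$, which matches the expected count.
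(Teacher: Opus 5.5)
Your reduction is the same as the paper's. There the proof is just \Cref{thm: MixedVolumeChernClass} combined with \Cref{cor: numberOfIsolatedSolutions}, and the nefness of the $D_i$ is supplied afterwards in \Cref{cor: neflysplit}. The gap is that you never actually establish that nefness.

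\textbf{Stage 1.} Matching the Cartier data of $D_i$ on $\sigma$ with some $b_j$, $j\in\mathcal{I}_\sigma$, is fine by uniqueness of the weights in \eqref{eq: onUsigmaSplits}. For $\rho\in\sigma(1)$ your inequality is then an equality. The real content is the case $\rho\notin\sigma(1)$, i.e.\ showing $e_i\in E_\rho^{-\langle v_\rho,b_j\rangle}$ for those rays. Neither \Cref{lemm: MinkowskiReason} nor \eqref{eq: vertexeq} says anything about rays outside $\sigma$. The definition \eqref{eq: klyachko_data_vps} gives this membership for free for $\gamma_j$, but not for the splitting vector $e_i$, and the two adapted bases on $U_\sigma$ need not agree. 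For instance, $\gamma_j+c\gamma_l$ has the same jumps as $\gamma_j$ at every $\rho\in\sigma(1)$ whenever $\gamma_l$ lies deeper in all those filtrations. Invoking \eqref{eq: vertexeq} at a second cone containing $\rho$ would need an exchange argument relating the $i$-th summand at the two cones, which you do not give.

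\textbf{Stage 2.} This does not rescue the argument. The statement uses the polytopes of \eqref{eq:polytope}, and for non-nef $D$ this $\Delta_D$ is not the virtual polytope $\Delta_{D+kA}-\Delta_{kA}$. For example, for the exceptional curve $E$ of the blow-up of $\mathbb{P}^2$ at a torus-fixed point, $\Delta_E=\{0\}$ while $E^2=-1$. So Stage 2 proves a different formula.

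\textbf{How to close the gap.} The missing step is easiest to close by global generation.
\begin{enumerate}[label = (\roman*)]
\item By \eqref{eq: klyachko_data_vps}, $\gamma_j\in E_\rho^{-\langle v_\rho,b_j\rangle}$ for every $\rho\in\Sigma(1)$ and every $j\in[m]$. Hence $\gamma_j\otimes x^{b_j}$ is a global section by \Cref{thm: globalSectionsVu}.
\item On $U_\sigma$, \Cref{lemm: MinkowskiReason} (both parts) shows that $\bigcap_{\rho\in\sigma(1)}E_\rho^{-\langle v_\rho,b\rangle}$ is spanned by the $\gamma_j$, $j\in\mathcal{I}_\sigma$, with $b-b_j\in\sigma^\vee$.
\item So every local section $e\otimes x^b$ is a $\mathbb{C}[\sigma^\vee\cap M]$-combination of the $\gamma_j\otimes x^{b_j}$, $j\in\mathcal{I}_\sigma$. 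Hence $\mathcal{E}_F$ is globally generated.
\item Each quotient $\mathcal{O}(D_i)$ is then globally generated, and $D_i$ is nef by \Cref{prop: criterionNef}.
\end{enumerate}
The paper argues differently: it uses \Cref{remark: reparametrizing} to take the column attached to $b_i$ to be $e_i$ itself, so that $b_i\in\Delta_{\mathcal{E}_F}(e_i)$ follows directly from the definition of the filtration.

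With nefness in hand, your Whitney computation and the appeal to \Cref{cor: numberOfIsolatedSolutions} complete the proof, and Stage 2 becomes unnecessary.
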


\begin{proof}
    Follows from \Cref{thm: MixedVolumeChernClass} and \Cref{cor: numberOfIsolatedSolutions}.
\end{proof}

In the case where $\mathcal{E}_F$ splits, the divisors $D_i$ appearing in \Cref{thm: splitRootCount} are nef.

\begin{corollary}
\label[corollary]{cor: neflysplit}
Let $F = C(u \star x^B)$ be a vertical system and $(X_{\Sigma},\mathcal{E}_F)$ a simplicial vertical pair associated to $F$ and let $u \in \mathbb{C}^m$. If $s = n$ and $\mathcal{E}_F = \bigoplus_{i = 1}^n \mathcal{O}(D_i)$ for some divisors $D_i \in \Div(X_{\Sigma})$. Then, the divisor $D_i$ is nef for all $i \in \{1,\dots,n\}$.
\end{corollary}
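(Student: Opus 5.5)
Since $\mathcal{E}_F = \bigoplus_{i=1}^n \mathcal{O}(D_i)$, the Klyachko filtration decomposes: there is a basis $e_1,\dots,e_n$ of $E=\mathbb{C}^n$ such that every $E_\rho^j$ is spanned by a subset of $\{e_1,\dots,e_n\}$. Moreover, for each $\rho$ we have $e_i\in E_\rho^j$ if and only if $j\le a_{\rho,i}$, where $D_i=\sum_\rho a_{\rho,i}D_\rho$, so $D_i = D_{\mathcal{E}_F}(e_i)$ in the notation of \eqref{eq: divisore}. The strategy is therefore to prove that, for each $i$, the divisor $D_{\mathcal{E}_F}(e_i)$ satisfies criterion (iii) of \Cref{prop: criterionNef}. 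Since $X_\Sigma$ is simplicial, we use its $\mathbb{Q}$-Cartier version: for every $\sigma\in\Sigma(n)$ we find $m_\sigma$ with $\langle v_\rho,m_\sigma\rangle=-a_{\rho,i}$ for $\rho\in\sigma(1)$, and then check that $\langle v_\rho,m_\sigma\rangle\ge -a_{\rho,i}$ for all $\rho\in\Sigma(1)$.

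Fix $\sigma\in\Sigma(n)$ and let $\mathcal{I}=\{1,\dots,n\}$ index the vertex of $\Delta_F$ for $\sigma$. By \Cref{lemm: MinkowskiReason}, the filtrations $E_\rho^\bullet$ for $\rho\in\sigma(1)$ are adapted to the basis $\gamma_1,\dots,\gamma_n$. Uniqueness of the multiset $\mathbf{u}(\sigma)$ in \eqref{eq: onUsigmaSplits}, combined with \eqref{eq: trivialization}, shows that the local weights of $\bigoplus_i\mathcal{O}(D_i)$ on $U_\sigma$ coincide, as a multiset, with $\{[b_j]\}_{j\le n}$ in $M_\sigma = M$. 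Hence after reordering, $D_i$ is Cartier on $U_\sigma$ with datum $m_\sigma = b_{j(i)}$, and $-a_{\rho,i}=\langle v_\rho,b_{j(i)}\rangle$ for all $\rho\in\sigma(1)$.

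It remains to show $\langle v_\rho,b_{j}\rangle\ge -a_{\rho,i}$ for every $\rho\in\Sigma(1)$, where $j=j(i)$. By definition \eqref{eq: klyachko_data_vps}, $\gamma_j\in E_\rho^{-\langle v_\rho,b_j\rangle}$. It therefore suffices that $e_i\in E_\rho^{-\langle v_\rho,b_j\rangle}$, which gives $a_{\rho,i}\ge -\langle v_\rho,b_j\rangle$. The key point is to identify $\gamma_j$ with $e_i$ up to scalar, or at least to show that $\gamma_j$ has a nonzero $e_i$-component in a way compatible with the splitting. Here I would use that each $E_\rho^k$ is a coordinate subspace in the $e$-basis. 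Since $\gamma_j\in E_\rho^{k}$ with $k=-\langle v_\rho,b_j\rangle$, every $e_l$ appearing in the support of $\gamma_j$ lies in $E_\rho^k$. So it suffices to show that $e_i$ lies in the support of $\gamma_{j(i)}$ in the $e$-basis. Because both $\{\gamma_1,\dots,\gamma_n\}$ and $\{e_l\}$ diagonalize the filtrations on $\sigma(1)$, we can choose the matching $j(i)$ via a nonvanishing term in the expansion of the determinant $\det(\gamma_1,\dots,\gamma_n)$ in the $e$-basis. This produces a permutation with $e_i$ in the support of $\gamma_{j(i)}$. We then need to check that this matching is consistent with the weights on $\sigma(1)$, meaning $\langle v_\rho,b_{j(i)}\rangle=-a_{\rho,i}$ for $\rho\in\sigma(1)$.

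This consistency is the main obstacle. The plan is to resolve it with an extremality argument in the spirit of \Cref{lemm: MinkowskiReason}. Among the permutations with nonzero determinant terms, choose one that minimizes $\sum_\rho\langle v_\rho,\cdot\rangle$ over $\rho\in\sigma(1)$. Support containment gives the inequality $-a_{\rho,i}\le\langle v_\rho,b_{j(i)}\rangle$ on $\sigma(1)$. Summing over $i$ and comparing with the equality of multisets forces equality for each $i$. With the matching established, the criterion of \Cref{prop: criterionNef} holds for every $\sigma$, and each $D_i$ is nef.
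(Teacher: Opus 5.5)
Your proof is correct, and it takes a different route from the paper's at the decisive step.

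The paper also works vertex by vertex. It takes the cones $P_i^\sigma$ cut out by the weights of $e_i$ on $\sigma(1)$, and uses that their Minkowski sum is the tangent cone of $\Delta_F$ to choose the columns $b_i$ as the apexes of these cones. It then appeals to \Cref{remark: reparametrizing} to assume that the column of $C$ at $b_i$ is $e_i$ itself. With that, $e_i\in E_\rho^{-\langle v_\rho,b_i\rangle}$ for every $\rho$ can be read off directly from \eqref{eq: klyachko_data_vps}.

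That reparametrization silently needs $e_i\in\spann\langle\gamma_j : b_j=b_i\rangle$, and this can fail. Take columns $(b_j,\gamma_j)$ equal to $((0,0),e_1)$, $((1,0),e_1+e_2)$, $((1,\pm 1),e_1)$ and $((2,\pm 1),e_2)$. Then:
\begin{itemize}
\item the fan $\Sigma_F$ has rays $(1,\pm1)$ and $(-1,0)$;
\item every filtration is coordinate in $\{e_1,e_2\}$;
\item at the vertex $(1,0)=(0,0)+(1,0)$, the Cartier datum of $D_{\mathcal{E}_F}(e_2)$ is $(1,0)$, and the only column there is $e_1+e_2$.
\end{itemize}

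Your argument never needs $e_i$ to be a column. Coordinate subspaces pass membership from $\gamma_j$ to every $e_l$ in its support. A nonzero term of $\det(\gamma_1,\dots,\gamma_n)$ gives the bijection $\pi$. Comparing jump sums on each $\rho\in\sigma(1)$ then forces $\langle v_\rho,b_{\pi(i)}\rangle=-a_{\rho,i}$. This is exactly what handles the example above; the paper's route is shorter when the reparametrization happens to be available.

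Three simplifications you can make:
\begin{itemize}
\item The minimization in your last paragraph is vacuous. Since $\sum_i\langle v_\rho,b_{\pi(i)}\rangle=\langle v_\rho,\sum_{j\in\mathcal{I}}b_j\rangle$ for every permutation, any $\pi$ with a nonzero determinant term already works.
\item You only need that, for each single $\rho\in\sigma(1)$, two bases adapted to $E_\rho^\bullet$ have the same weight multiset. Uniqueness of $\mathbf{u}(\sigma)$ is not required.
\item The resulting data $b_{\pi(i)}$ are lattice points on every maximal cone, so $D_i$ is genuinely Cartier. Hence \Cref{prop: criterionNef} applies as stated, without the $\mathbb{Q}$-Cartier detour.
\end{itemize}
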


\begin{proof}
    As $\mathcal{E}_F$ splits as a direct sum, the terms in the Klyachko filtration also split. This implies that there exists a basis $\{e_1,\dots,e_n\}$ such that the vector spaces $E_{\rho}^i$ are generated by a subset of it for all $\rho \in \Sigma(1)$ and $i \in \mathbb{Z}$. In particular, if $e_j \notin E_{\rho}^i$, then $E_{\rho}^i \subset \spann\langle e_1,\dots,\widehat{e_j},\dots,e_n \rangle$. Similarly to \Cref{lemm: MinkowskiReason}, we consider a maximal cone $\sigma\in \Sigma(n)$ and the polyhedra
    $$P_i^{\sigma} \coloneqq \{b \in \mathbb{Z}^n : \: \langle v_{\rho},b\rangle \geq -\max(i \in \mathbb{Z} : \: e_i \in E_{\rho}^i), \: \forall \rho \in \sigma(1)\}.$$
Using \eqref{eq: wedgeFiltration}, the sum of the polyhedra $P_i^{\sigma}$ equals the tangent cone of $\Delta_F$ at the vertex corresponding to $\sigma$. Therefore, we may assume that the first $n$ columns of $B$ ($b_1,\dots,b_n \in \mathbb{Z}^n$) are the vertices of $P_1^{\sigma},\dots,P_n^{\sigma}$.  For these lattice points, we must have that $e_i \in E_{\rho}^{-\langle v_{\rho},b_i \rangle}$ for all $\rho \in \sigma(1)$. Using \Cref{remark: reparametrizing} and that the $E_{\rho}^i$ are spanned by subsets of $\{e_1,\dots,e_n\}$, we  may assume that the column of $C$ corresponding to $b_i$ equals the vector $e_i$. In particular, for any other $\rho \in \Sigma(1)$, we have
$$ \langle v_{\rho},b_i \rangle \geq -\max (i \in \mathbb{Z} : \: e_i \in E_{\rho}^i),$$
implying that $b_i \in \Delta_{\mathcal{E}_F}(e_i)$. Applying \Cref{prop: criterionNef}, we deduce that $D_i$ is nef for all $i = 1,\dots,n$.
\end{proof}

\begin{remark}
\Cref{cor: neflysplit} also implies that the divisor $D_F = \sum_{i = 1}^nD_i$ is nef. In particular $\mathcal{E}_F^{\vee}(D_F) = \oplus_{i = 1}^n\mathcal{O}(D_F - D_i)$, where each of the divisors $D_F - D_i$ is also nef. In order to show that the divisors $D_i$ are nef, we only needed to use the coefficients of $F = C(u \star x^B)$ corresponding to the vertices of the associated polytopes. Therefore, we only need to assume algebraic independence of the coefficients that generate these vertices. This is a similar situation to the extremal-genericity studied in \cite{BENDER2024156}.
\end{remark}

\begin{example}
\label[example]{ex: continueTangentBundle}
We continue with \Cref{ex: TangentExample}. Computing from the Klyachko filtration in \eqref{ex: KlyachkoOfTangent}, we deduce that
$$D_{F,1} = D_0 + D_1 + D_2, \quad D_{F,2} = 0,$$
where $D_0,D_1,D_2$ are the three $\Tn$ divisors of $\P^2$ associated to the variables $x,y,z$ in the coordinate ring $\mathbb{C}[x,y,z]$ of $\mathbb{P}^2$. These are all nef line bundles in $\mathbb{P}^2$. Homogenizing $F = C(u \star x^B)$ with respect to $D_{F,2}$, we get a family of Laurent polynomials $\widetilde{F} \in A \otimes R^{\pm}$ which can be written as:
\begin{equation}
\label{eq: exampleHomogenizedSystem}
\widetilde{F} = \begin{cases}
u_1 x^{-1} y + u_2 x^{-1}z  \hspace{3.1cm}- u_5 xz^{-1} - u_6yz^{-1}= 0,\\[4pt]
\hspace{3.1cm} u_3xy^{-1} + u_4 y^{-1}z - u_5 xz^{-1} - u_6yz^{-1} = 0.
\end{cases}
\end{equation}
Note that the Laurent polynomials in \eqref{eq: exampleHomogenizedSystem} are homogeneous of degree $0$. Moreover, we deduce that
$$D_{\mathcal{E}_F}(e) = \begin{cases}
D_0& e = -e_1 - e_2\\
D_1& e = e_1\\
D_2& e = e_2\\
0 & \text{ otherwise.} \end{cases}$$
We consider $D_F = D_0 + D_1 + D_2$, which is nef and satisfies that $D_F - D$ is nef for all $D \in \mathcal{S}_E$. The ideal $\IF \subset A \otimes R$ defined in \eqref{def: ideal} equals
\begin{multline}\IFk = \Big\langle \varphi_1 \otimes yz^2(\widetilde{F}_u), \varphi_1 \otimes y^2z(\widetilde{F}_u), \varphi_2 \otimes xz^2(\widetilde{F}_u), \varphi_2 \otimes x^2z(\widetilde{F}_u), \\
(\varphi_2 - \varphi_1) \otimes x^2y(\widetilde{F}_u), (\varphi_2 - \varphi_1)  \otimes xy^2(\widetilde{F}_u), \varphi_2 \otimes xyz(\widetilde{F}_u), \varphi_1 \otimes xyz(\widetilde{F}_u)  \Big\rangle =  \\
\Big \langle u_3xz^2 + u_4z^3 - u_5xyz - u_6y^2z, u_3xyz + u_4yz^2 - u_5xy^2 - u_6y^3, u_1yz^2 + u_2z^3 - u_5x^2z - u_6xyz \\
u_1xyz + u_2xz^2 - u_5x^3 - u_6x^2y, u_1xy^2 + u_2xyz - u_3x^3 - u_4x^2z, u_1y^3+u_2y^2z - u_3x^2y - u_4xyz, \\
u_3x^2z + u_4xz^2 - u_5x^2y - u_6xy^2, u_1y^2z + u_2yz^2 - u_5x^2y - u_6xy^2\Big\rangle \subset  R_3
\end{multline}
for each $u \in \mathbb{C}^6$. If we consider the restriction to the face $V(z) \subset \mathbb{P}^2$, we get the ideal
\begin{multline*}
    \IF + \langle z \rangle = \Big \langle- u_5xy^2 - u_6y^3, - u_5x^3 - u_6x^2y, u_1xy^2  - u_3x^3, u_1y^3- u_3x^2y, \\
- u_5x^2y - u_6xy^2, - u_5x^2y - u_6xy^2\Big\rangle.
\end{multline*}
Saturating with respect to $\langle x,y \rangle$, we derive that
$$V(\IFk) \cap V(z) = V(-u_5x - u_6y, u_1y^2  - u_3x^2) \subset \mathbb{P}^1 \cong V(z).$$
which has a solution of the form $(x:y:0) \in \mathbb{P}^2$, if and only if $u_1u_5^2 - u_3u_6^2 = 0$. Similar ideals can be derived for the other two faces of $\mathbb{P}^2$, leading to the conditions $u_2u_4^2 - u_5u_3^2 = 0$ and $u_4u_2^2 - u_6u_1^2 = 0$. Therefore,
the generic number of solutions  of the system $F = C(u \star x^B)$ (counted with multiplicity) is attained if and only if
\begin{equation}
\label{eq: uconditions}
u_1u_5^2 - u_3u_6^2 \neq 0, \: u_2u_3^2 - u_5u_4^2 \neq 0, \: u_4u_1^2 - u_6u_2^2 \neq 0.
\end{equation}

      \begin{figure}[t]
    \begin{tikzpicture}[x=0.75cm, y=0.75cm, line width=1.25pt]

      \foreach \x in {-1,1} {
        \draw[color=white!40!black] (\x, 2pt) -- (\x, -2pt);}
      \foreach \y in {-1,1} {
        \draw[color=white!40!black] (2pt, \y) -- (-2pt, \y);}
      \draw[color=white!40!black, <->] (-2.4, 0) -- (2.4,0) {};
      \draw[color=white!40!black, <->] (0, -1.6) -- (0, 1.8) {};

      \draw[color=blue, fill=blue, opacity=1.0] (0,0) -- (0,-1) -- (1,-1) --
      (0,0) -- cycle;
      \draw[color=blue, fill=blue, opacity=1.5] (0,0) -- (1,0) -- (0,1) --
      (0,0) -- cycle;
      \draw[color=blue, fill=blue, opacity=1.5] (0,0) -- (-1,0) -- (-1,1) --
      (0,0) -- cycle;
      \draw[color=blue, opacity=0.5] (1,0) -- (-1,0) -- (0,-1) -- (0,1) -- (-1,1) -- (1,-1)
      -- (1,0) -- cycle;

      \node[circle, fill=black, inner sep=2.0pt] () at (0,0) {};
      \node[circle, fill=black, inner sep=2.0pt] () at (1,0) {};
      \node[circle, fill=black, inner sep=2.0pt] () at (0,1) {};
      \node[circle, fill=black, inner sep=2.0pt] () at (-1,1) {};
      \node[circle, fill=black, inner sep=2.0pt] () at (-1,0) {};
      \node[circle, fill=black, inner sep=2.0pt] () at (0,-1) {};
      \node[circle, fill=black, inner sep=2.0pt] () at (1,-1) {};
      \node[color=black] () at (-2.4, 0.9) {$D_{\mathcal{E}_F}(e_1)$};
      \node[color=black] () at (2.4, 1.2) {$D_{\mathcal{E}_F}(-e_1-e_2)$};
      \node[color=black] () at (2.4, -0.9) {$D_{\mathcal{E}_F}(e_2)$};
    \end{tikzpicture}
    \caption{To the left, the polytope associated to the vertical with the three divisors appearing in $\mathcal{S}_E$, marked in blue. To the right, the poset defined by $\mathcal{S}_E$.}
    \label{fig:tangentBundle}
\end{figure}

With regard to the number of zeros, we can see that the poset $\mathcal{S}_E = \{D_0,D_1,D_2,0\}$ is defined by
$$D \leq D' \iff D = 0.$$
We also deduce that
$$\delta_D = \begin{cases}
   -1 & D \in \{D_0,D_1,D_2\} \\
   1 & \text{otherwise.}
\end{cases}$$
The polytope associated with $D = 0$ is the lattice point $0 \in \mathbb{Z}^2$, while the other three, denoted as $\Delta_0,\Delta_1,\Delta_2$ are scaled copies of the unit simplex. Applying \Cref{thm: RootCountFinalFormula}, we deduce that if $V(F_u) \cap \Tn$ is finite, then
$$|V(F_u) \cap \Tn| \leq \MV(\Delta_0,\Delta_1) + \MV(\Delta_0,\Delta_2) + \MV(\Delta_1,\Delta_2) = 3.$$
for all $u \in \mathbb{C}^m$, where the bound is attained in the open subset determined in \eqref{eq: uconditions}.
\end{example}

\def\bibfont{\normalsize}
\newcommand{\etalchar}[1]{$^{#1}$}

\Addresses

\end{document}